%
\documentclass[letterpaper,11pt]{article}

%

\usepackage[margin=1in]{geometry}  
\usepackage{xspace,exscale,relsize}
\usepackage{fancybox,shadow}
\usepackage{graphicx}
\usepackage{color}
\usepackage{amsthm,amsfonts}
\usepackage{amssymb}
\usepackage{authblk}
\usepackage[title]{appendix}

\usepackage{extarrows}

\usepackage[noadjust]{cite}

\usepackage{amsmath}
	\makeatletter
	\let\over=\@@over \let\overwithdelims=\@@overwithdelims
	\let\atop=\@@atop \let\atopwithdelims=\@@atopwithdelims
  	\let\above=\@@above \let\abovewithdelims=\@@abovewithdelims
  	\makeatother
\interdisplaylinepenalty=10000

\usepackage{rotating}




%
\usepackage{ifpdf}

\usepackage{subfigure}
\usepackage{psfrag}

\usepackage{prettyref}
\usepackage{enumitem}

\usepackage{tikz}
\usetikzlibrary{arrows}
\tikzstyle{int}=[draw, fill=blue!20, minimum size=2em]
\tikzstyle{dot}=[circle, draw, fill=blue!20, minimum size=2em]
\tikzstyle{init} = [pin edge={to-,thin,black}]

\usepackage[
            CJKbookmarks=true,
            bookmarksnumbered=true,
            bookmarksopen=true,
            colorlinks=true,
            citecolor=red,
            linkcolor=blue,
            anchorcolor=red,
            urlcolor=blue,
            pdfauthor={Polyanskiy and Wu}
            ]{hyperref}

\usepackage[all]{xy}

\usepackage{mathtools}

\usepackage{ifthen}
\newboolean{aos}
\setboolean{aos}{FALSE}


\newcommand\numberthis{\addtocounter{equation}{1}\tag{\theequation}}


\newcommand{\mreals}{\ensuremath{\mathbb{R}}}

\newcommand{\supp}{\ensuremath{\mathrm{supp}}}

\ifx\eqref\undefined
	\newcommand{\eqref}[1]{~(\ref{#1})}
\fi
\ifx\mod\undefined
	\def\mod{\mathop{\rm mod}}
\fi

\usepackage{bm}

\def\argmin{\mathop{\rm argmin}}

\def\exp{\mathop{\rm exp}}

\def\EE{\Expect}

\def\Var{\mathrm{Var}}

\def\PP{\mathbb{P}}

\def\eqdef{\triangleq}

\def\Leb{\mathrm{Leb}}
\def\simiid{\stackrel{iid}{\sim}}



\newcommand{\stepa}[1]{\overset{\rm (a)}{#1}}
\newcommand{\stepb}[1]{\overset{\rm (b)}{#1}}
\newcommand{\stepc}[1]{\overset{\rm (c)}{#1}}
\newcommand{\stepd}[1]{\overset{\rm (d)}{#1}}

\newcommand{\Poi}{\mathrm{Poi}}

\newcommand{\reals}{\mathbb{R}}

\newcommand{\integers}{\mathbb{Z}}

\newcommand{\Expect}{\mathbb{E}}
\newcommand{\expect}[1]{\mathbb{E}\left[#1\right]}
\newcommand{\Prob}{\mathbb{P}}
\newcommand{\prob}[1]{\mathbb{P}\left[#1\right]}
\newcommand{\pprob}[1]{\mathbb{P}[#1]}

\newcommand{\TV}{{\rm TV}}

\newcommand{\iid}{i.i.d.\xspace}

\newcommand{\pth}[1]{\left( #1 \right)}
\newcommand{\qth}[1]{\left[ #1 \right]}
\newcommand{\sth}[1]{\left\{ #1 \right\}}

\newcommand{\iiddistr}{{\stackrel{\text{\iid}}{\sim}}}

\newcommand{\Bern}{\text{Bern}}

\newcommand{\Bino}{\text{Binom}}

\newcommand{\indc}[1]{{1\left\{{#1}\right\}}}
\newcommand{\Indc}{\mathbf{1}}

\definecolor{myblue}{rgb}{.8, .8, 1}
\definecolor{mathblue}{rgb}{0.2472, 0.24, 0.6} 
\definecolor{mathred}{rgb}{0.6, 0.24, 0.442893}
\definecolor{mathyellow}{rgb}{0.6, 0.547014, 0.24}

\newcommand{\blue}{\color{blue}}
\newcommand{\nb}[1]{{\sf\blue[#1]}}

\newcommand{\calF}{{\mathcal{F}}}
\newcommand{\calG}{{\mathcal{G}}}

\newcommand{\calK}{{\mathcal{K}}}

\newcommand{\calM}{{\mathcal{M}}}
\newcommand{\calN}{{\mathcal{N}}}

\newcommand{\calP}{{\mathcal{P}}}

\newcommand{\calX}{{\mathcal{X}}}

\newcommand{\diverge}{\to \infty}
\newcommand{\mmse}{\mathsf{mmse}}

\newrefformat{eq}{(\ref{#1})}
\newrefformat{thm}{Theorem~\ref{#1}}
\newrefformat{th}{Theorem~\ref{#1}}
\newrefformat{chap}{Chapter~\ref{#1}}
\newrefformat{sec}{Section~\ref{#1}}
\newrefformat{seca}{Section~\ref{#1}}
\newrefformat{algo}{Algorithm~\ref{#1}}
\newrefformat{fig}{Fig.~\ref{#1}}
\newrefformat{tab}{Table~\ref{#1}}
\newrefformat{rmk}{Remark~\ref{#1}}
\newrefformat{clm}{Claim~\ref{#1}}
\newrefformat{def}{Definition~\ref{#1}}
\newrefformat{cor}{Corollary~\ref{#1}}
\newrefformat{lmm}{Lemma~\ref{#1}}
\newrefformat{prop}{Proposition~\ref{#1}}
\newrefformat{pr}{Proposition~\ref{#1}}
\newrefformat{app}{Appendix~\ref{#1}}
\newrefformat{apx}{Appendix~\ref{#1}}
\newrefformat{ex}{Example~\ref{#1}}
\newrefformat{exer}{Exercise~\ref{#1}}
\newrefformat{soln}{Solution~\ref{#1}}

\def\unifto{\mathop{{\mskip 3mu plus 2mu minus 1mu%
	\setbox0=\hbox{$\mathchar"3221$}%
	\raise.6ex\copy0\kern-\wd0%
	\lower0.5ex\hbox{$\mathchar"3221$}}\mskip 3mu plus 2mu minus 1mu}}

\ifx\lesssim\undefined
\def\simleq{{{\mskip 3mu plus 2mu minus 1mu%
	\setbox0=\hbox{$\mathchar"013C$}%
	\raise.2ex\copy0\kern-\wd0%
	\lower0.9ex\hbox{$\mathchar"0218$}}\mskip 3mu plus 2mu minus 1mu}}
\else
\def\simleq{\lesssim}
\fi

\ifx\gtrsim\undefined
\def\simgeq{{{\mskip 3mu plus 2mu minus 1mu%
	\setbox0=\hbox{$\mathchar"013E$}%
	\raise.2ex\copy0\kern-\wd0%
	\lower0.9ex\hbox{$\mathchar"0218$}}\mskip 3mu plus 2mu minus 1mu}}
\else
\def\simgeq{\gtrsim}
\fi



\newtheorem{theorem}{Theorem}
\newtheorem{lemma}[theorem]{Lemma}

\newtheorem{proposition}[theorem]{Proposition}
\newtheorem{prop}[theorem]{Proposition}

\theoremstyle{definition}

\newtheorem{remark}{Remark}

%
%
\newif\ifmapx
{\catcode`/=0 \catcode`\\=12/gdef/mkillslash\#1{#1}}
\edef\jobnametmp{\expandafter\string\csname embayes2_apx\endcsname}
\edef\jobnameapx{\expandafter\mkillslash\jobnametmp}
\edef\jobnameexpand{\jobname}
\ifx\jobnameexpand\jobnameapx
\mapxtrue
\else
\mapxfalse
\fi

\long\def\apxonly#1{\ifmapx{\color{blue}#1}\fi}


\newcommand{\bfH}{\mathbf{H}}

\newcommand{\poly}{\mathsf{poly}}

\newcommand{\Regret}{\mathsf{Regret}}

\newcommand{\TotRegret}{\mathsf{TotRegret}}
\newcommand{\AccRegret}{\mathsf{AccRegret}}
\newcommand{\TotRegretComp}{\mathsf{TotRegretComp}}
\newcommand{\SubG}{\mathsf{SubG}}
\newcommand{\SubE}{\mathsf{SubE}}

\newcommand{\oracle}{\mathsf{oracle}}

\renewcommand{\hat}{\widehat}
\renewcommand{\tilde}{\widetilde}

\def\mmse{\mathrm{mmse}}

\def\GammaD{\mathrm{Gamma}}
\synctex=1

\begin{document}
\ifpdf
\DeclareGraphicsExtensions{.pgf}
\graphicspath{{figures/}{plots/}}
\fi

\title{Sharp regret bounds for empirical Bayes and compound decision problems}

\author{Yury Polyanskiy and Yihong Wu\thanks{Y.P. is with the Department of EECS, MIT, Cambridge,
MA, email: \url{yp@mit.edu}. Y.W. is with the Department of Statistics and Data Science, Yale
University, New Haven, CT, email: \url{yihong.wu@yale.edu}. Y.~Polyanskiy is
supported in part by the MIT-IBM Watson AI Lab, and the NSF Grants CCF-1717842, CCF-2131115. Y.~Wu is supported in part by the NSF
Grant CCF-1900507, NSF CAREER award CCF-1651588, and an Alfred Sloan fellowship. }}

\maketitle

\begin{abstract}


We consider the classical problems of estimating the mean of an
$n$-dimensional normally (with identity covariance matrix) or Poisson distributed vector under the squared loss.
In a Bayesian setting the optimal estimator is given by the (prior-dependent) conditional mean. In a
frequentist setting various shrinkage methods were developed over the last century. 
The framework of empirical Bayes, put forth by Robbins \cite{Robbins56}, combines Bayesian and frequentist mindsets by postulating that the  parameters are
independent but with an unknown prior and aims to use a fully data-driven estimator to compete with the Bayesian oracle that knows the true prior.
The central figure of merit is the regret, namely, the total excess risk over the Bayes risk in the worst case (over the priors).
Although this paradigm was introduced more than 60 years ago, little is known about the asymptotic scaling of the optimal regret in the nonparametric setting.


We show that for the Poisson model with compactly supported and subexponential priors, the optimal regret scales as $\Theta((\frac{\log n}{\log\log n})^2)$ and $\Theta(\log^3 n)$, respectively, both attained by the original estimator of Robbins.
For the normal mean model, the regret is shown to be at least $\Omega((\frac{\log n}{\log\log n})^2)$ and $\Omega(\log^2 n)$ for compactly supported and subgaussian priors, respectively, the former of which resolves the conjecture of Singh \cite{Singh79} on the impossibility of achieving bounded regret; 
before this work, the best regret lower bound was $\Omega(1)$.
In addition to the empirical Bayes setting, these results are shown to hold in the compound setting where the parameters are deterministic.
As a side application, the construction in this paper also leads to improved or new lower bounds
for density estimation of Gaussian and Poisson mixtures.
\end{abstract}

\tableofcontents

\section{Introduction}
	\label{sec:intro}
Consider estimating an unknown parameter $\theta \in \mreals^{10}$ from a single measurement $Y
\sim \mathcal{N}(\theta, I_{10})$. Suppose that the observed data is
$$ Y= [1.08,\, -2.43,\,  1.52,\, -1.17,\,  1.39,\,  8.3 ,\, 10.02,\, 10.61,\, 9.3 ,\, 10.14 ]\,.$$
Although the worst-case reasoning suggests simply using the maximum likelihood estimator (MLE) $\hat
\theta = Y$, it is well-known by now that it is often advantageous to use shrinkage estimators,
such as James-Stein. However, in the example shown,\footnote{For a bigger example in the same spirit, see Efron's ``two towers'' example \cite[Figure 1]{efron2019bayes}.} it should also be clear that applying the James-Stein estimator on the first 5 coordinates and the rest \textit{separately} would be even more advantageous since shrinking to a common mean of all 10
coordinates (in this bimodal situation) appears unnatural. How can one formalize derivation of such clever procedures and how
can one study their fundamental limits? Both of these questions were addressed in a decision-theoretic framework
proposed by Robbins~\cite{Robbins51,Robbins56} under the name of \textit{empirical Bayes}.
Since then a lot has been written about this paradigm and empirical Bayes methodology has been widely used in practice for large-scale data analysis, with notable applications in 
computational biology especially microarrays \cite{efron2001empirical}, sports prediction \cite{brown2008season}, etc.
We refer to the survey articles \cite{casella1985introduction,zhang2003compound,efron2021empirical} and the monograph \cite{efron2012large} for theory and methodology for empirical Bayes.

The basic tenet and the major surprise of the empirical Bayes theory is that, when the number of independent observations is large, it is possible to ``borrow strength'' from these independent (and seemingly unrelated) observations so as to achieve the optimal Bayes risk per coordinate asymptotically.
In this work we study the principal performance metric (regret) and its asymptotic scaling as the
number of observations grows, thereby characterizing the optimal speed to approach the Bayes risk. We proceed to formal definitions.

	\subsection{Definitions and main results}
	\label{sec:def}

Throughout the paper, we use the vector notation $\theta^n\triangleq (\theta_1,\ldots,\theta_n)$. 
Let $\{P_\theta: \theta\in \Theta\subset\reals\}$ be a parametric family of distributions 
 with a bi-measurable density $f_\theta(y)$ with respect to a common dominating measure $\nu$.
Conditioned on the parameters $\theta^n$, the observations $Y_1,\ldots,Y_n$ are independently distributed according to $P_{\theta_i}, i=1,\ldots,n$.


Following the terminology in \cite{Robbins51,Robbins56,zhang1997empirical,zhang2003compound}, we consider both the \emph{empirical Bayes} (EB) and \emph{compound} settings, in which 
$\theta_i$'s are iid (with an unknown prior) or deterministic, respectively.
In both settings, the goal is to estimate the parameters $\theta_1,\ldots,\theta_n$ under the quadratic loss and compete with the oracle that has extra distributional information (e.g.~the prior or the empirical distribution of the parameters).

%
%
	%

We start with the empirical Bayes setting. 
Let $\theta^n=(\theta_1,\ldots,\theta_n)\iiddistr G$. Then $Y_i \iiddistr P_G$, where $P_G = \int P_\theta G(d\theta)$ is the mixture under the prior (mixing distribution) $G$ with density $f_G(y) = \int f_\theta(y) G(d\theta)$.
Denote by $\Expect_{G}$ the expectation taken over the joint distribution of $(\theta^n,Y^n)$.
The Bayes risk under the prior $G$ is the minimum mean squared error (MMSE) of estimating $\theta\sim G$ based on a single observation $Y\sim f_\theta$:
\begin{equation}
\mmse(G) \triangleq \inf_{\hat\theta} \Expect_G[(\hat\theta(Y) - \theta)^2] = \Expect_G[(\hat\theta_G(Y) - \theta)^2],
\label{eq:mmse}
\end{equation}
where 
\begin{equation}
\hat\theta_G(y) \triangleq \Expect_G[\theta|Y=y] = \frac{\int_\Theta \theta f_\theta(y) G(d\theta)}{\int_\Theta f_\theta(y) G(d\theta)}
\label{eq:bayes}
\end{equation}
is the Bayes estimator (conditional mean) associated with the prior $G$.
In this paper we will be concerned with the following basic models:
\begin{itemize}
	\item Normal mean model: $P_\theta = \mathcal{N}(\theta, 1), \theta \in \mreals$, with density $f_\theta(y) = \varphi(y-\theta)$ and $\varphi(y) \triangleq \frac{1}{\sqrt{2\pi}} e^{-y^2/2}$. For any prior $G$, the corresponding Bayes estimator can be expressed using the mixture density $f_G$ and its derivative as (Tweedie's formula \cite{efron2011tweedie}):
	\begin{equation}
\hat\theta_G(y) = y + \frac{f_G'(y)}{f_G(y)},
\label{eq:bayes-gau}
\end{equation}
with the second term known as the Bayes correction.

	\item Poisson model: $P_\theta = \Poi(\theta), \theta \in \mreals_+$, with probability mass function $f_\theta(y) = \frac{ e^{-\theta} \theta^y}{y!}$ and $y \in \integers_+ \triangleq \{0,1,\ldots\}$. The Bayes estimator takes the form:
		\begin{equation}
\hat\theta_G(y) = (y +1) \frac{f_G(y+1)}{f_G(y)}.
\label{eq:bayes-poi}
\end{equation}
	
\end{itemize}
Both models are well studied and widely applied in practice. 
In addition to denoising, the normal mean model has also found usefulness in other high-dimensional problems such as principle component analysis. 

The main objective of empirical Bayes is to compete with the oracle that knows the true prior and minimize the excess risk, known as the \emph{regret} \cite{jiang2009general,efron2011tweedie,efron2019bayes}, by mimicking the true Bayes estimator.
This strategy is either carried out explicitly (by learning a prior from data then executing the learned Bayes estimator) or implicitly (by constructing an approximate Bayes estimator using the marginal distribution of the observation directly).\footnote{These two approaches are referred to as $g$-modeling and $f$-modeling in \cite{efron2014two}, respectively.}
 Robbins' famous estimator for the Poisson model \cite{Robbins56} belongs to the latter category. Namely, we estimate $\theta_j$ by 
\begin{equation}\label{eq:robbins}
	\tilde \theta_j \eqdef (Y_j+1) {N(Y_j+1)\over N(Y_j)}\,, \quad N(y)\eqdef
|\{i\in[n]:Y_i=y\}|.
\end{equation}
which can be viewed as a plugin version of the Bayes estimator \prettyref{eq:bayes-gau} replacing the marginal $f_G$ by the empirical version.
Despite the broad application of the Robbins estimator in practice, little is known about its optimality.

To study the fundamental limit of empirical Bayes estimation, let us define the optimal regret. Given a collection of priors $\calG$, the total regret is defined as
	\begin{equation}
	\TotRegret_n(\calG) \triangleq \inf_{\hat \theta^n} \sup_{G \in \calG} 	\sth{\Expect_G\qth{\|\hat \theta^n(Y_1,\ldots,Y_n) - \theta^n\|^2} - n \cdot \mmse(G)}\,.
	\label{eq:avgregret}
	\end{equation}
	where $\|\cdot\|$ denotes the Euclidean norm and the infimum is taken over all estimator $\hat\theta^n$ with respect to $Y_1,\ldots,Y_n$.
	In this paper we focus on \emph{nonparametric} settings; for example, the class $\calG$ consists of all priors over a compact parameter space.
	
	Under appropriate assumptions, Robbins demonstrated it is possible to achieve a \emph{sublinear} total regret $o(n)$, so that the excess risk per observation is amortized as the sample size $n$ tends to infinity. A natural question is to determine the order of the optimal regret.
	Singh conjectured \cite[p.~891]{Singh79} that for any exponential family,
bounded total regret is not possible even if the priors are compactly supported.
In this work we resolve this conjecture in the positive for both normal and Poisson model.
Before this work the best lower bound is $\Omega(1)$ \cite{li2005convergence}, which can be shown by considering a parametric family of priors.

Our main results are as follows:

\begin{theorem}[Normal means problem]
\label{th:gaussian} 
Consider the normal mean model: $P_\theta = \mathcal{N}(\theta, 1), \theta \in \mreals$.

\begin{itemize}
	\item 
Let $\calP([-h,h])$ be the set of all distributions supported on $[-h,h]$. 
There exist constants $c_0=c_0(h)>0$ and $n_0$, such that for all $n\geq n_0$,
	\begin{equation}\label{eq:gsn_lb2}
		\TotRegret_n(\calP([-h,h]) \ge c_0 \left({\log n \over \log \log n}\right)^2\,.
	\end{equation}		
	
	\item 
	Denote by $\SubG(s)$ the set of all $s$-subgaussian distributions, namely
	$\SubG(s) = \{G: G([-t,t]^c) \leq 2 e^{-t^2/(2s)}, \forall t>0\}$. 
	Then for some constant $c_1=c_1(s)>0$ and all $n\ge n_0$ 
	\begin{equation}\label{eq:gsn_lb1}
		\TotRegret_n(\SubG(s)) \ge c_1 \log^2 n\,.
	\end{equation}

	\end{itemize}
	
\end{theorem}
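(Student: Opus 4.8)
\emph{Reduction.} By the orthogonality principle (Pythagoras applied to $\hat\theta_i(Y^n)$, $\hat\theta_G(Y_i)$ and $\theta_i$, using $\theta_i\perp Y_{[n]\setminus i}\mid Y_i$), every estimator satisfies $\Expect_G\|\hat\theta^n-\theta^n\|^2-n\,\mmse(G)=\sum_{i=1}^n\Expect_G[(\hat\theta_i(Y^n)-\hat\theta_G(Y_i))^2]$, so $\TotRegret_n(\calG)=\inf_{\hat\theta^n}\sup_{G\in\calG}\sum_i\Expect_G[(\hat\theta_i(Y^n)-\hat\theta_G(Y_i))^2]$. I would lower-bound this minimax value by the Bayes risk against a prior $\pi$ supported on a finite subfamily $\{G_\omega:\omega\in\Omega\}\subseteq\calG$, which by exchangeability of the coordinates equals $n\cdot\Expect_{\omega\sim\pi}\Expect_{G_\omega}[\Var(\hat\theta_G(Y_1)\mid Y^n)]$ — namely $n$ times the average posterior variance of the random Bayes estimate $\hat\theta_G(Y_1)$ after all the data are seen. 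Thus it suffices to build a subfamily that $n$ i.i.d.\ samples from $P_G$ cannot resolve (so the posterior over $\omega$ stays diffuse) while the Bayes estimators $\hat\theta_{G_\omega}$ are well spread out in $L^2$. A two-point family is provably too weak here — it caps the per-coordinate posterior variance at $\tfrac14$ times the squared swing of $\hat\theta_G(Y_1)$, and the indistinguishability constraint $n\,\hellinger^2(P_{G_0},P_{G_1})=O(1)$ limits that swing to $O(\sqrt{k/n})$ with $k$ as below, yielding only $\Theta(\log n/\log\log n)$. To reach the stated bounds I would use a hypercube of $\asymp k$ roughly-independent directions.

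\emph{The construction (compact case).} Fix $h$, a smooth base prior $G_*$ on $[-h,h]$ with density bounded away from $0$, and set $G_\omega=G_*+\sum_{m=k}^{2k}\omega_m\varepsilon_m\mu_m$, $\omega\in\{\pm1\}^{\{k,\dots,2k\}}$, where $\mu_m$ is a fixed signed density on $[-h,h]$ whose first $m-1$ moments vanish and whose $m$-th moment is $\Theta(1)$. The needed estimates all follow from the Gauss--Hermite expansion $f_G(y)=\varphi(y)\sum_{j\ge0}\frac{\Expect_G[\theta^j]}{j!}H_j(y)$ ($H_j$ the probabilists' Hermite polynomials, $\int H_j^2\varphi=j!$), the identity $(\varphi H_j)'=-\varphi H_{j+1}$, and the recurrence $H_{j+1}=yH_j-jH_{j-1}$: (i) a moment-$m$ perturbation contributes $\asymp\varepsilon_m^2h^{2m}/m!$ to $\chi^2(P_{G_\omega}\|P_{G_*})$; (ii) since $\hat\theta_G(y)=y+f_G'(y)/f_G(y)$ by Tweedie's formula \prettyref{eq:bayes-gau}, the change $\delta_m\hat\theta:=\hat\theta_{G_*+\varepsilon_m\mu_m}-\hat\theta_{G_*}$ has leading term $\frac{\varepsilon_m\varphi(y)H_{m-1}(y)}{(m-1)!\,f_{G_*}(y)}$ — differentiation \emph{lowers} the Hermite index rather than raising it, so the oscillation is not smoothed away — and one computes $\Expect_{Y\sim P_{G_*}}[(\delta_m\hat\theta(Y))^2]\asymp m\cdot(\text{the }\chi^2\text{-contribution of }m)$, an amplification by the factor $m$. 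Choosing $\varepsilon_m$ so the $\chi^2$-contribution is $\asymp1/n$ is feasible ($\varepsilon_m=O(1)$) and, since the top moment is $2k$, forces via Stirling that $k\log k\asymp\log n$, i.e.\ $k\asymp\frac{\log n}{\log\log n}$. Then $I(\omega;Y^n)\le n\,\Expect_\omega[D(P_{G_\omega}\|P_{G_*})]\le n\,\Expect_\omega[\chi^2(P_{G_\omega}\|P_{G_*})]\lesssim k$, so a Fano/Assouad argument shows a constant fraction of the $\asymp k$ coordinates $\omega_m$ retain $\Omega(1)$ posterior variance; each such coordinate adds $\asymp\Expect_{Y_1}[(\delta_m\hat\theta(Y_1))^2]\asymp m/n$ to $\Var(\hat\theta_G(Y_1)\mid Y^n)$. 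Summing over $m\in\{k,\dots,2k\}$ gives $\asymp k^2/n$ per coordinate, hence $\TotRegret_n(\calP([-h,h]))\gtrsim k^2\asymp\pth{\frac{\log n}{\log\log n}}^2$.

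\emph{The subgaussian case.} Run the same scheme on $[-h_n,h_n]$ with $h_n\asymp\sqrt{\log n}$, choosing $G_*$ with Gaussian-type tails so the whole family lies in $\SubG(s)$; this also shrinks $\int\varphi^2H_m^2/f_{G_*}$ below $m!$ by a factor $\gamma(s)^m<1$, which further eases the $\chi^2$ budget. Requiring each per-moment $\chi^2$-contribution to be $\asymp1/n$ now permits $k\asymp\log n$ (it suffices that $k$ exceed $h_n^2\asymp\log n$ by a constant factor so the factorial dominates), and the identical aggregation gives $\TotRegret_n(\SubG(s))\gtrsim k^2\asymp\log^2n$.

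\emph{Main obstacle.} The reduction and the mutual-information aggregation are routine once in place; the real work is the moment-matched construction and its Gauss--Hermite analysis — establishing the factor-$m$ amplification of the Bayes correction relative to the $\chi^2$-size of the perturbation, while controlling the ratio $f_{G_*}/\varphi$ over the band $|y|\lesssim\sqrt m$ that carries the Hermite mass, and checking the $\asymp k$ directions are orthogonal enough that the posterior variances add. In the subgaussian case there is the added subtlety that the perturbation amplitude admissible near $\pm h_n$ is only polynomially small in $n$, so one must verify this still accommodates a per-moment $\chi^2$ of order $1/n$ up to moment $2k\asymp\log n$. Pinning down the constant in $k\log k\asymp\log n$ — precisely what produces the $\log\log n$ in the compact bound — is the most delicate point.
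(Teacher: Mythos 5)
Your high-level plan matches the paper's: reduce to estimating the Bayes rule $\hat\theta_G$ in $L_2(f_G)$, build a hypercube of priors around a base $G_0$, expand in Hermite modes, exploit the observation that the Bayes correction at moment~$m$ lives one Hermite index lower than the density perturbation (your $H_{m-1}$ leading term is correct — it comes from the three-term recurrence $y H_m - H_{m+1} = m H_{m-1}$, not from differentiation of $\varphi H_j$, which actually \emph{raises} the index), and close with Assouad. The factor-$m$ amplification of $\Expect[(\delta_m\hat\theta)^2]$ over the $\chi^2$-contribution, the budget $k\log k \asymp \log n$, and the $k^2$ aggregation are all the right estimates. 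The main structural difference is that the paper uses a \emph{multiplicative} perturbation $dG_u \propto (1+\delta r_u)\,dG_0$ with $G_0 = \calN(0,s)$, and takes the $r_j$'s to be dilated Hermite functions $\psi_{m+3j}$ that exactly diagonalize the Mehler kernel $S = K^*K$ (Lemma~\ref{lem:normal_main}); you use an additive moment-matching construction with a general bounded base prior.

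The genuine gap is exactly the point you flag as the ``most delicate'': orthogonality of the perturbation directions. In the paper this is handled \emph{by construction}: in the $\{\psi_k\}$ eigenbasis the operator $K_1$ is tridiagonal (since $\psi_k' \in \mathrm{span}\{\psi_{k-1},\psi_{k+1}\}$), so choosing indices $m, m+3, \dots, 4m$ gives $(K_1 r_i, K_1 r_j)_{L_2(f_0)} = \mathbf{1}\{i=j\}$ exactly, and likewise for $K$. Your $\mu_m$ has first $m-1$ moments vanishing but nonzero moments $c_j^{(m)}$ at \emph{all} $j\ge m$; consequently $\delta_m\hat\theta$ has a full tail of Hermite components at $H_{m-1},H_m,H_{m+1},\dots$, and for $m<m'$ the coefficient of $\delta_m\hat\theta$ at $H_{m'-1}$ is of the same order as the \emph{leading} coefficient of $\delta_{m'}\hat\theta$, up to the ratio $\varepsilon_m/\varepsilon_{m'}$. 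This ratio does decay geometrically, so the Gram matrix is diagonally dominant with a proof, but you neither state this nor verify it — and without it your claim that the $\asymp k$ posterior-variance contributions add is unsupported. Proposition~\ref{prop:ass_gen} is designed precisely to package the argument so that one only has to exhibit orthogonality of $\{K_1 r_i\}$ and near-orthogonality of $\{K r_i\}$ together with an $L_\infty$-bound on the $r_i$; the eigenfunction choice delivers these with $\tau=\tau_1=1,\tau_2=0$ for free.

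Two smaller points. First, $\varepsilon_m = O(1)$ cannot hold uniformly: you need $\sum_m \varepsilon_m \|\mu_m\|_\infty \lesssim \min G_*$ for $G_\omega$ to remain a density, which forces $\varepsilon_m = O(1/k)$ near $m=2k$; this only shifts $k$ by lower-order factors so the rate survives, but it is not a cosmetic detail. Second, your subgaussian case is a sketch: setting $h_n\asymp\sqrt{\log n}$ with $k\asymp\log n$ makes the Stirling budget $2k\log(2k/(eh_n^2))$ \emph{negative}, so the factorial does not dominate as you assert. The paper instead keeps $s=\Theta(1)$ (no truncation radius at all) and relies on the geometric eigenvalue decay $\|K\psi_k\|^2 \asymp \mu^k$ with $\mu<1$ of the Mehler kernel; this is the rigorous version of your $\gamma(s)^m$ factor, and it is where the $\log^2 n$ really comes from. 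Finally, your aside about the two-point bound giving $\Theta(\log n/\log\log n)$ is an interesting and correct observation — it already beats the prior $\Omega(1)$ — but do note that the paper only claims $\Omega(1)$ was the best \emph{published} lower bound.
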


\begin{theorem}[Poisson problem]\label{th:poisson} 
Consider the Poisson model $P_\theta = \Poi(\theta), \theta \in \mreals_+$. 

\begin{itemize}
	\item 
	For $h>0$, there exist constants $c_1=c_1(h), c_2=c_2(h)$ such that for all $n\ge n_0$,
	\begin{equation}\label{eq:poi_lb}
		c_1 \left({\log n \over \log
		\log n}\right)^2\leq \TotRegret_n(\calP([0,h]) \leq c_2 \left({\log n \over \log
		\log n}\right)^2\,.
\end{equation}

\item Denote by $\SubE(s)$ the set of all $s$-subexponential distributions on $\reals_+$, namely
	$\SubE(s) = \{G: G([t,\infty)) \leq 2e^{-t/s}, \forall t>0\}$. 
	Then for some constants $c_3=c_3(s), c_4=c_4(s)$ and all $n\ge n_0$,
	\begin{equation}\label{eq:poi_lb2}
		c_3 \log^3 n \leq \TotRegret_n(\SubE(s)) \leq c_4 \log^3 n\,.
\end{equation}		

\end{itemize}
Furthermore, in both \prettyref{eq:poi_lb} and \prettyref{eq:poi_lb2}, the upper bound is achieved by the estimator \prettyref{eq:robbins} of Robbins.

\end{theorem}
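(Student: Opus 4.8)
Write the regret of an estimator $\hat\theta^n$ against a prior $G$ as $\Expect_G\|\hat\theta^n-\theta^n\|^2-n\,\mmse(G)$. Since in the EB model the pairs $(\theta_i,Y_i)$ are i.i.d.\ and $\Expect_G[\theta_i\mid Y^n]=\Expect_G[\theta_i\mid Y_i]=\hat\theta_G(Y_i)$, Pythagoras gives the identity $\Expect_G\|\hat\theta^n-\theta^n\|^2-n\,\mmse(G)=\sum_i\Expect_G[(\hat\theta_i-\hat\theta_G(Y_i))^2]$, hence $\TotRegret_n(\calG)=\inf_{\hat\theta^n}\sup_{G\in\calG}\sum_i\Expect_G[(\hat\theta_i-\hat\theta_G(Y_i))^2]$. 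For the \emph{upper bound} I plug in Robbins' $\tilde\theta^n$ from \eqref{eq:robbins}; by exchangeability the above equals $n\,\Expect_G[(\tilde\theta_1-\hat\theta_G(Y_1))^2]$. Conditioning on $Y_1=y$ and writing $p=f_G(y)$, $q=f_G(y+1)$, the remaining counts satisfy $N(y)=1+\Binom(n-1,p)$, $N(y+1)=\Binom(n-1,q)$ (negatively associated) and $\hat\theta_G(y)=(y+1)q/p$. Routine Binomial moment computations show the ratio $N(y+1)/N(y)$ estimates $q/p$ with bias $O(\tfrac qp e^{-np})$ and variance $O(\tfrac{q}{np^2}+\tfrac{q^2}{np^3})$; multiplying by $(y+1)^2$ and by $f_G(y)=p$ and recognizing $(y+1)q/p=\hat\theta_G(y)$, the contribution of $y$ is $O\!\big(\tfrac{(y+1)\hat\theta_G(y)+\hat\theta_G(y)^2}{n}\big)$ once $nf_G(y)\gtrsim\log n$, and is super-polynomially small otherwise (a crude bound together with $\sum_{y>y^*}f_G(y)$ being tiny, $y^*:=$ largest $y$ with $f_G(y)\gtrsim\tfrac{\log n}{n}$). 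Summing and multiplying by $n$ gives $\TotRegret_n\lesssim\sum_{y\le y^*}\big((y+1)\hat\theta_G(y)+\hat\theta_G(y)^2\big)$. For $\calP([0,h])$ one has $\hat\theta_G(y)\le h$ and $y^*\asymp\frac{\log n}{\log\log n}$ (from $h^y/y!\gtrsim\frac{\log n}{n}$), giving $\lesssim h(y^*)^2$. For $\SubE(s)$ one shows $\hat\theta_G(y)=\Expect_G[\theta\mid Y=y]\lesssim y+s\log n$ on the relevant range (the subexponential tail controls the tilted measure $\propto e^{-\theta}\theta^yG(d\theta)$), $y^*\asymp s\log n$ (a Poisson mixture on $\reals_+$ can decay only geometrically, not super-exponentially), and the $y>y^*$ terms sum to $O(\log^2 n)$, giving $\lesssim (y^*)^3\asymp\log^3 n$.

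\textbf{Lower bound: reduction and construction.}
For the matching lower bounds, fix any prior $\pi$ on $\calG$. Since the $i$-th term of $\inf_{\hat\theta^n}\Expect_{G\sim\pi}\sum_i\Expect_G[(\hat\theta_i-\hat\theta_G(Y_i))^2]$ is minimized at $\hat\theta_i(Y^n)=\Expect[\hat\theta_G(Y_i)\mid Y^n]$, we get $\TotRegret_n(\calG)\ge n\,\Expect[\Var(\hat\theta_G(Y_1)\mid Y^n)]$, the expectation under $G\sim\pi,\ Y^n\mid G\iiddistr P_G$. The plan is a multi-scale ``statistically invisible perturbation'' family. Take a smooth base prior $G_*$ with density bounded away from $0$ on $[0,h]$; then $\hat\theta_{G_*}(y)\to h$ and $f_{G_*}(y-1)/f_{G_*}(y)\asymp y/h$ for $h\ll y\le y^*$ (for $\SubE(s)$ take $G_*=$ exponential of mean $s$, whose mixture pmf is geometric and $\hat\theta_{G_*}(y)\asymp y$). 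Pick separated frequencies $\omega_1<\dots<\omega_K$ with $K\asymp y^*$ so that the oscillatory perturbation $\mu_k(d\theta)\propto g_*(\theta)\,[\text{band-limited bump near }\omega_k]\,d\theta$, pushed through the Poisson-mixture map, produces a \emph{relative} pmf perturbation $r_k(y):=\Delta f_{\mu_k}(y)/f_{G_*}(y)$ that is essentially a spike at $y_k\asymp k$ of height $c_k$ with $\sum_y f_{G_*}(y)r_k(y)^2=c_k^2 f_{G_*}(y_k)\asymp B/n$ for a small constant $B$ (equivalently, flipping $\mu_k$ costs $n$-sample $\chi^2=O(B)$). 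Let $\pi$ be the law of $G_\sigma=G_*+\sum_k\sigma_k\mu_k$ with $\sigma_k\iiddistr\Unif\{\pm1\}$ (valid since $\sum_k\|\mu_k\|$ is small). Since each bit changes the $n$-sample law by bounded $\chi^2$ and the $\mu_k$ lie in separated bands (nearly orthogonal in the $1/f_{G_*}$-weighted inner product), the posterior is nearly a product and $\Var(\sigma_k\mid Y^n)\ge v>0$ for all $k$. A first-order expansion gives $\hat\theta_{G_\sigma}(y)\approx\hat\theta_{G_*}(y)+\sum_k\sigma_k\epsilon_k(y)$ with $\epsilon_k(y)=\hat\theta_{G_*}(y)\,(r_k(y+1)-r_k(y))$, supported at $y\in\{y_k-1,y_k\}$ with magnitude $\asymp\hat\theta_{G_*}(y_k)c_k$; hence $\Var(\hat\theta_{G_\sigma}(Y_1)\mid Y^n)\gtrsim v\sum_k\epsilon_k(Y_1)^2$ and, averaging over $Y_1$, $\Expect_{Y_1}[\epsilon_k(Y_1)^2]\asymp c_k^2\big(f_{G_*}(y_k-1)+f_{G_*}(y_k)\big)\hat\theta_{G_*}(y_k)^2\asymp\frac{B}{n}\,y_k\,\hat\theta_{G_*}(y_k)$ (using $c_k^2 f_{G_*}(y_k)\asymp B/n$ and $f_{G_*}(y_k-1)/f_{G_*}(y_k)\asymp y_k/\hat\theta_{G_*}(y_k)$). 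Therefore $\TotRegret_n\gtrsim n\cdot v\sum_{k\le K}\frac Bn\,y_k\hat\theta_{G_*}(y_k)\asymp\sum_{k\le K}y_k\,\hat\theta_{G_*}(y_k)$: for $\calP([0,h])$ this is $\asymp h\sum_{k\le K}k\asymp hK^2\asymp(\tfrac{\log n}{\log\log n})^2$, and for $\SubE(s)$ (where $\hat\theta_{G_*}(y_k)\asymp y_k$, $K\asymp s\log n$) it is $\asymp\sum_{k\le K}k^2\asymp K^3\asymp\log^3 n$.

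\textbf{Main obstacle and remaining pieces.}
The technical heart, and the step I expect to be hardest, is the construction and analysis of the $\mu_k$: one must deconvolve through the Poisson-mixture operator a perturbation supported on the \emph{bounded} set $[0,h]$ (of small total variation) so that its image is genuinely localized in $y$ near a prescribed $y_k$ \emph{above} the support---so that its $\chi^2$ cost concentrates at $y_k$ and its discrete derivative is localized---while incurring only negligible oscillatory leakage at other $y$, uniformly over all $K$ scales. This hinges on sharp Laplace/stationary-phase asymptotics for $\int_0^h e^{-\theta}\theta^y g_*(\theta)e^{i\omega\theta}\,d\theta$ (the competition between the exponential rate $y/h-1$ and the frequency $\omega$ yields the ``high-pass'' profile that makes a difference of two such terms a spike), plus the bookkeeping to control the first-order expansion error of $\hat\theta_{G_\sigma}$ and the cross-covariances $\Cov(\sigma_k,\sigma_l\mid Y^n)$. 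The subexponential case is the same scheme with $[0,h]$ replaced by the ``affordable'' range $[0,O(s\log n)]$ dictated by the tail; both the Laplace analysis and the counting $K^2\rightsquigarrow K^3$ change only through $\hat\theta_{G_*}(y_k)\asymp y_k$ and $K\asymp s\log n$. Finally, both bounds transfer to the compound setting by a standard symmetrization argument: Robbins' estimator is permutation-equivariant and applies verbatim in the compound model for the upper bounds, while any compound procedure can be randomized into an empirical Bayes one, so the two minimax regrets over these classes agree up to lower-order terms.
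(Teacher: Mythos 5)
Your \textbf{upper bound} argument is essentially the paper's (Appendix~C): plug in Robbins, condition on $Y_1=y$, reduce to second moments of $N(y+1)/N(y)$, bound via $\min(1,nf(y))$--type estimates for the Binomial terms, then split the sum over $y$ at the effective support of the mixture (Lemma~C.2). Your sketch is less explicit about the negative correlation between $N(y)$ and $N(y+1)$ (the paper handles this cleanly by conditioning on $N(y)$ and treating $N(y+1)\mid N(y)\sim\Binom(n-N(y),f(y+1)/(1-f(y)))$), but the scheme and the final bound $\sum_{y\le y^*}\bigl((y+1)\hat\theta_G(y)+\hat\theta_G(y)^2\bigr)$ match.

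Your \textbf{lower bound} reduction to $\TotRegret_n(\calG)\ge n\,\Expect[\Var(\hat\theta_G(Y_1)\mid Y^n)]$ is correct and is a Bayesian variant of the Assouad-type reduction the paper carries out in Proposition~\ref{prop:ass_gen}. But the route from there on is genuinely different from the paper, and it is precisely at the step you flag as ``the technical heart'' that a real gap remains. The paper does \emph{not} go through Laplace/stationary-phase asymptotics of $\int_0^h e^{-\theta}\theta^y g_*(\theta)e^{i\omega\theta}\,d\theta$. Instead it chooses the base prior $G_0=\GammaD(\alpha,\beta)$ (exponential for the subexponential class) \emph{precisely} because Proposition~\ref{prop:KK1x} shows this is the \emph{unique} family for which the nonlinear regression map becomes a differential conjugate of $K$, namely $K_1 r = \tfrac{1}{1+\beta}K(xr')$. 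This converts the needed quasi-orthogonality of perturbations on the regression function into orthogonality under $K^*K$, which is then obtained \emph{exactly} via Laguerre polynomials and the Hardy--Hille identity (Lemma~\ref{lem:poisson_main}, Lemma~\ref{lem:poisson_exp}), with no approximate localization, stationary phase, or ``spike'' picture required. The compactly supported case is then recovered by the truncation Lemma~\ref{lmm:truncation} with $\alpha,\beta\asymp\log n$.

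Two concrete concerns about your sketch. First, you take the base prior $G_*$ supported on $[0,h]$ with a Fourier-modulated bump $\mu_k$; but the resulting ``high-pass'' profile $r_k(y)\approx(y/h-1)/((y/h-1)+i\omega_k)$ transitions near $y\asymp h\omega_k$ with a transition \emph{width} that is itself $\Theta(\omega_k)\asymp\Theta(y_k)$, not $O(1)$, so the $r_k$'s are not spikes and their pairwise overlaps are not small; establishing the near-orthogonality you assert (``$\mu_k$ lie in separated bands, nearly orthogonal in the $1/f_{G_*}$-weighted inner product'') is exactly the work that the Laguerre diagonalization in Lemma~\ref{lem:poisson_main} does for you. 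Second, your bound requires $\Var(\sigma_k\mid Y^n)\ge v>0$ uniformly over all $K$ bits, which needs not just per-bit bounded $\chi^2$ but control of the cross-covariances $\Cov(\sigma_k,\sigma_l\mid Y^n)$; you mention this but do not handle it, whereas the paper's exact orthogonality under $K$ makes the analogous step in Proposition~\ref{prop:ass_gen} routine.

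Finally, the closing remark about the compound setting is both outside the scope of the theorem (it concerns only $\TotRegret_n$ in the EB sense) and not correct: Proposition~\ref{prop:regret-comp} shows $\TotRegretComp_n\ge\TotRegret_n$, so compound \emph{lower} bounds do follow, but the paper explicitly flags as open whether the Robbins analysis gives the matching compound \emph{upper} bound (Section~\ref{sec:discuss}); Robbins being permutation-equivariant does not by itself give $\EE[\|\tilde\theta^n-\theta^n\|^2]-n\,\mmse(G_{\theta^n})=O(\log^3 n)$ for fixed $\theta^n$.
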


%
	%
	%
		%


A few remarks are in order:
\begin{enumerate}
	\item 
	It is known that tail assumptions (e.g.~compact support	or moment conditions such as subgaussianity) are necessary for achieving non-trivial regret.
	In fact, in the normal means problem, if we allow all priors on $\reals$, \cite[Example 1]{zhang2009generalized} showed that $\TotRegret_n=n+o(n)$, which is almost as large as possible.

\item The upper bound in \prettyref{eq:poi_lb} for compactly supported priors is essentially contained in~\cite{BGR13}, who analyzed the risk of the Robbins estimator with sample size $n$ replaced by $\Poi(n)$, which renders the empirical counts $N(y)$'s in \prettyref{eq:robbins} independent for technical convenience.
In \prettyref{app:robbins} we bound the regret of the Robbins estimator without Poissonization following essentially the same strategy as in~\cite{BGR13} and fix a misstep therein.

\item
The notion of \emph{conditional regret} is defined in \cite[Sec.~4]{efron2011tweedie}, which refers to the excess prediction risk of a given EB estimator over the oracle (Bayes) risk conditioned on the value $y$ of the observation. For several parametric classes of priors and parametric EB estimators, such as Gaussian priors and the James-Stein estimator (see \cite[Eq.~(4.10)]{efron2011tweedie}), the conditional regret is shown to be $\frac{c(y)}{n}$ for some explicit constant $c(y)$. Our results show that for nonparametric classes of priors, 
this constant $c(y)$ is not uniformly bounded and the conditional regret in the worst case is of higher order than $\frac{1}{n}$ by logarithmic factors.

	\item In addition to Robbins estimator, which does not take a Bayes form, the optimal regret in \prettyref{eq:poi_lb} and \prettyref{eq:poi_lb2} in the Poisson model can also achieved by a more principled approach of first estimating the prior then using the corresponding Bayes estimator. 
As suggested by Robbins \cite[Section 6]{Robbins56}, one could use Wolfowitz's minimum-distance estimator \cite{Wolfowitz57}.
This strategy indeed succeeds for a class of minimum-distance estimators, including the nonparametric MLE (NPMLE) \cite{KW56,lindsay1983geometry1} which minimizes the Kullback-Leibler divergence \cite{JPW21}.
In contrast, for the Gaussian model, the current best regret upper bound is $O(\log^5 n)$ for the subgaussian (hence also compactly supported) class \cite{jiang2009general} -- see \prettyref{sec:related} for a more detailed discussion.
	

\item An interesting open question for the Poisson model is to obtain a constant-factor
characterization of regret uniformly over $h$ (or $s$) in $(0,\infty)$. At this point it is not
clear how magnitude of $h$ (or $s$) affects the regret and such characterization would clarify
this.
	
\end{enumerate}

\paragraph{Compound setting.} 
In the compound estimation problem, the unknown parameter $\theta^n$ is no longer assumed to be random. 
Again, we are interested in estimating this deterministic vector $\theta^n$ by competing with an oracle possessing some
side information about $\theta^n$. There are two natural choices of the oracle
resulting in the two definitions of the regret, that we define below. For a given $\theta^n$ let
us denote its empirical distribution by
\begin{equation}
G_{\theta^n}  \triangleq {1\over n} \sum_{i=1}^n \delta_{\theta_i}.
\label{eq:emp}
\end{equation}
 A simple observation is that
$$ \inf_{f_1} \sum_{i=1}^n \EE_{\theta_i}[(\theta_i - f_1(Y_i))^2] = n \cdot \mmse(G_{\theta^n})\,,$$
achieved by the Bayes estimator under the prior being the empirical distribution, namely, $\EE_{\theta \sim G_{\theta^n}}[\theta|Y]$.
This leads us to defining the (most commonly used) version of the regret in the compound setting:
\begin{equation}
\TotRegretComp_n(\Theta) \eqdef \inf_{\hat \theta^n} \sup_{\theta^n \in \Theta^{\otimes n}} \sth{ \EE[\|\hat
\theta^n - \theta^n\|^2] - n \cdot \mmse(G_{\theta^n})}\,,
\label{eq:totregretcomp1}
\end{equation}
which, by the observation above, corresponds to competing with an oracle who can adapt to $\theta^n$
but is restricted to use a separable estimator, namely, applying a common univariate rule $f_1$ to all coordinates.

An alternative notion of regret was introduced in~\cite{greenshtein2009asymptotic}, which can be
formalized by defining
$$ R_{\oracle}(G_{\theta^n}) = \inf_{f} \sup_{\sigma \in S_n} \EE_{\sigma \theta^n}[\|\sigma \theta^n -
f(Y^n)\|^2]\,,$$
where $S_n$ is the group of permutations on $[n]$ and $\sigma \theta^n \triangleq
(\theta_{\sigma(1)},\ldots,\theta_{\sigma(n)})$. One can show that the value of $R_{\oracle}$ is
unchanged if one (a) replaces $\sup_\sigma$ with average over a random uniform $\sigma$; or (b) if one removes
$\sup\sigma$ and instead restricts the infimum to permutation-invariant functions
$f:\mreals^n\to\mreals^n$. (As~\cite{greenshtein2009asymptotic} shows, the optimal $f$ is given by 
the conditional expectation $\EE[\theta^n|Y^n]$ under the uniform prior on the
$S_n$ orbit of the support of $G_{\theta^n}$.) The corresponding notion of regret is then defined as\footnote{Conditions under which $\TotRegretComp_n' = \TotRegretComp_n + O(1)$ are given in \cite[Theorem 5.1]{greenshtein2009asymptotic}, which are satisfied by bounded normal mean model 
\cite[Corollary 5.2]{greenshtein2009asymptotic}.}
\begin{equation}
 \TotRegretComp_n'(\Theta) \eqdef \inf_{\hat \theta^n} \sup_{\theta^n \in \Theta^{\otimes n}} \sth{ \EE[\|\hat
\theta^n - \theta^n\|^2] - R_{\oracle}(G_{\theta^n})}\,,
\label{eq:totregretcomp2}
\end{equation}
which can be understood either as (a) competing with the oracle that has information about $G_{\theta^n}$ (but
not $\theta^n$), or (b) can adapt to $\theta^n$ but is restricted to use permutation-invariant
estimators $\hat \theta^n$.

It turns out that our lower bounds for the EB regret also apply to both versions of the compound regret.
Indeed, denote by $\calP(\Theta)$ the collection of all priors on the parameter space $\Theta$. The following general lemma relates 
the regret in the compound setting to that of the EB setting (see \prettyref{app:comp} for a proof).
As a result, for bounded parameter space, 
the $\Omega((\frac{\log n}{\log\log n})^2)$ regret lower bound in \prettyref{th:gaussian} and \prettyref{th:poisson} 
also hold in the compound setting.
\begin{proposition}
\label{prop:regret-comp}	
	\begin{equation}\label{eq:cc_1}
	\TotRegretComp_n'(\Theta) \ge \TotRegretComp_n(\Theta) \ge \TotRegret_n(\calP(\Theta))\,.
\end{equation}
\end{proposition}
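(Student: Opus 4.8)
The plan is to prove the two inequalities of \prettyref{eq:cc_1} separately: the right one is a soft ``Bayes-risk-below-minimax'' reduction, while the left one follows by exhibiting a single good permutation-invariant oracle strategy.

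For the right inequality $\TotRegretComp_n(\Theta) \ge \TotRegret_n(\calP(\Theta))$, I would fix an arbitrary estimator $\hat\theta^n$ and an arbitrary prior $G \in \calP(\Theta)$, and draw $\theta^n \iiddistr G$, so that $\theta^n \in \Theta^{\otimes n}$ almost surely. The one substantive point is that randomizing the parameter cannot help the empirical-distribution oracle:
\[
\EE_{\theta^n \sim G^{\otimes n}}[\mmse(G_{\theta^n})] \;\le\; \mmse(G).
\]
This I would get by writing $\mmse(G_{\theta^n}) = \inf_{f_1}\frac1n\sum_{i=1}^n \EE_{Y\sim P_{\theta_i}}[(f_1(Y)-\theta_i)^2]$ and pulling $\EE_{\theta^n}$ inside the infimum (which only decreases the value), the resulting quantity being exactly $\mmse(G)$; equivalently it is Jensen's inequality for the concave functional $G\mapsto\mmse(G)$ together with the identity $\EE[G_{\theta^n}]=G$. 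Then for every $\hat\theta^n$ and $G$,
\[
\EE_G[\|\hat\theta^n-\theta^n\|^2] - n\,\mmse(G)
\;\le\; \EE_{\theta^n\sim G^{\otimes n}}\!\left[\EE[\|\hat\theta^n-\theta^n\|^2\mid\theta^n] - n\,\mmse(G_{\theta^n})\right]
\;\le\; \sup_{\theta^n\in\Theta^{\otimes n}}\sth{\EE[\|\hat\theta^n-\theta^n\|^2]-n\,\mmse(G_{\theta^n})},
\]
and taking $\sup_{G\in\calP(\Theta)}$ on the left followed by $\inf_{\hat\theta^n}$ on both sides finishes this half.

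For the left inequality $\TotRegretComp_n'(\Theta) \ge \TotRegretComp_n(\Theta)$, note that the two regrets differ only in the oracle term, so it suffices to show $R_{\oracle}(G_{\theta^n}) \le n\,\mmse(G_{\theta^n})$ for every $\theta^n\in\Theta^{\otimes n}$. I would bound $R_\oracle$ from above by plugging into its defining infimum the separable rule $f(y^n)=(f_1(y_1),\ldots,f_1(y_n))$ with $f_1=\hat\theta_{G_{\theta^n}}$, the Bayes estimator for the empirical prior. Because the empirical distribution is permutation invariant, $G_{\sigma\theta^n}=G_{\theta^n}$, and a one-line reindexing ($j=\sigma(i)$) shows that for \emph{every} $\sigma\in S_n$,
\[
\EE_{\sigma\theta^n}[\|\sigma\theta^n - f(Y^n)\|^2] \;=\; \sum_{j=1}^n\EE_{Y\sim P_{\theta_j}}[(\theta_j-f_1(Y))^2] \;=\; n\,\mmse(G_{\theta^n}),
\]
the last step being the observation recorded just above \prettyref{eq:totregretcomp1}. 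Hence $\sup_\sigma\EE_{\sigma\theta^n}[\cdots]=n\,\mmse(G_{\theta^n})$ for this choice of $f$, so $R_\oracle(G_{\theta^n})\le n\,\mmse(G_{\theta^n})$, which gives \prettyref{eq:cc_1}.

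I do not expect a genuine obstacle: both steps are soft. The only things that need care are the bookkeeping around the two oracle definitions --- verifying that the separable empirical-Bayes rule is an admissible competitor in the infimum defining $R_\oracle$ and that its worst-case (equivalently permutation-averaged) risk collapses exactly to $n\,\mmse(G_{\theta^n})$ --- and making sure all the expectations involved are finite, which holds under the tail assumptions ($\calP([-h,h])$, $\SubG(s)$, $\SubE(s)$) relevant for \prettyref{th:gaussian} and \prettyref{th:poisson}.
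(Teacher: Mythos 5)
Your proof is correct and follows essentially the same route as the paper: the left inequality via the observation that a separable rule is permutation-invariant so $R_\oracle(G_{\theta^n}) \le n\,\mmse(G_{\theta^n})$, and the right inequality via $\EE_{\theta^n\simiid G}[\mmse(G_{\theta^n})]\le\mmse(G)$ (the paper cites concavity of $G\mapsto\mmse(G)$ and Jensen, which is the second of the two equivalent justifications you give). Your explicit "pull the expectation inside the infimum" derivation of that key inequality is a nice self-contained alternative to the cited concavity result, but otherwise the argument is the same.
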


Extending the definitions in \prettyref{eq:totregretcomp1} and \prettyref{eq:totregretcomp2}, one can define the corresponding total regret in the constrained setting, 
denoted by $\TotRegretComp_n(\calG)$ and $\TotRegretComp_n'(\calG)$ respectively, where the supremum is taken over $\theta^n\in\reals^n$ whose empirical distribution belongs to a prescribed class $\calG$. 
Similar to \prettyref{prop:regret-comp}, one can relate the regret in the compound setting to that of the EB setting.
The following result (also proved in \prettyref{app:comp}) provides such a reduction for the normal means problem with subgaussian priors, which shows the $\Omega(\log^2 n)$ lower bound in \prettyref{th:gaussian} continues to hold in the compound setting.

\begin{proposition}
\label{prop:comp-subg}
Consider the normal means problem. There exists a universal constant $c_0$ such that for any $s>0$,
\[
\TotRegretComp(\SubG(s)) \geq \TotRegret(\SubG(c_0 s)) - \frac{s}{c_0 n},
\]
where $\SubG(s)$ denotes the collection of $s$-subgaussian distributions in \prettyref{th:gaussian}.
\end{proposition}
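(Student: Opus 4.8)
The plan is to run the coupling argument behind \prettyref{prop:regret-comp}: to any estimator and any prior $G$ we associate the random vector $\theta^n\iiddistr G$ and feed it into the supremum defining the compound regret, so that the compound regret of every estimator is at least (essentially) its empirical Bayes regret under $G$. The only obstacle absent in the bounded-$\Theta$ setting is that the empirical measure $G_{\theta^n}$ of an i.i.d.\ $s$-subgaussian sample need not itself be $s$-subgaussian. I would get around this by drawing $\theta^n$ from a \emph{smaller} scale, $G\in\SubG(c_0 s)$ with $c_0$ a small universal constant, and absorbing the rare failure event $E^c\triangleq\{G_{\theta^n}\notin\SubG(s)\}$ into the additive slack $s/(c_0 n)$.

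Fix an estimator $\hat\theta^n$ and $G\in\SubG(c_0 s)$; write $\phi(\theta^n)\triangleq\EE[\|\hat\theta^n-\theta^n\|^2\mid\theta^n]$, let $R_e\triangleq\EE_G\|\hat\theta^n-\theta^n\|^2-n\,\mmse(G)$ be the EB regret of $\hat\theta^n$ under $G$, and let $R_c$ be its worst-case compound regret over $\{\theta^n:G_{\theta^n}\in\SubG(s)\}$. Two soft reductions. (i)~Every feasible $\theta^n$ has $\|\theta^n\|_\infty\le T_n\triangleq\sqrt{2s\log(2n)}$, since a single atom of $G_{\theta^n}$ outside $[-T_n,T_n]$ already violates the $s$-subgaussian tail at level $1/n$; hence coordinatewise clipping of any estimator to $[-T_n,T_n]$ can only decrease $\|\hat\theta^n-\theta^n\|^2$ on the feasible set and leaves $\mmse(G_{\theta^n})$ untouched, so we may assume $\|\hat\theta^n\|_\infty\le T_n$, whence $\phi(\theta^n)\le 2nT_n^2+2\|\theta^n\|^2$. (ii)~Conditioning on $E$, on which $\theta^n$ is a.s.\ feasible, gives $R_c\ge\EE[\phi(\theta^n)-n\,\mmse(G_{\theta^n})\mid E]=\PP[E]^{-1}\,\EE[(\phi(\theta^n)-n\,\mmse(G_{\theta^n}))\indc{E}]$. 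Using the elementary facts $\EE_{\theta^n\iiddistr G}[\mmse(G_{\theta^n})]\le\mmse(G)$ (plug the true Bayes rule $\hat\theta_G$ into the empirical Bayes risk of the empirical prior) and $0\le\mmse(G_{\theta^n})\le1$, the last expectation is $\ge R_e-\EE[\phi\indc{E^c}]$; combined with $\PP[E]\le1$ and $R_c\ge0$ (take $\theta^n\equiv0$, for which $\mmse(G_{\theta^n})=0$), a one-line case split gives $R_c\ge R_e-\EE[\phi\indc{E^c}]$ in all cases. Taking $\sup_G$ and then $\inf$ over clipped estimators (the same as over all estimators, by (i)) yields the proposition, \emph{provided} $\EE[\phi\indc{E^c}]\le s/(c_0 n)$.

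The remaining work is this bound. From (i), $\EE[\phi\indc{E^c}]\le 2nT_n^2\,\PP[E^c]+2n\,\EE_G[\theta_1^2\indc{E^c}]$. For $\PP[E^c]$ I would use that $G_{\theta^n}\notin\SubG(s)$ iff the $k$-th largest of $|\theta_1|,\dots,|\theta_n|$ exceeds $\sqrt{2s\log(2n/k)}$ for some $k\in[n]$, then union bound over $k$ with $\PP[\mathrm{Bin}(n,p)\ge k]\le\binom{n}{k}p^k$ and the tail $\PP_G[|\theta_1|>\sqrt{2s\log(2n/k)}]\le 2(k/2n)^{1/c_0}$; this bounds $\PP[E^c]$ by $n\max_{1\le k\le n}[e\,(k/2n)^{1/c_0-1}]^k$, which for $c_0$ small enough that $1/c_0-1\ge4$ is $O(n^{-3})$. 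For $\EE_G[\theta_1^2\indc{E^c}]$ I would apply Hölder with exponent $\log n$ together with the subgaussian moment bound $(\EE_G|\theta_1|^{2\log n})^{1/\log n}\lesssim c_0 s\log n$, giving $O(c_0 s\log n\cdot n^{-3})$. Since $T_n^2=O(s\log n)$, both terms are $O(s\log n/n^2)$, which is $\le s/(c_0 n)$ once $c_0$ is a small enough universal constant ($\log n/n$ being bounded), the case of small $n$ being trivial since then the right-hand side of the proposition is nonpositive.

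The main obstacle is precisely this last estimate — fixing how small the universal scale ratio $c_0$ must be so that an i.i.d.\ draw from $\SubG(c_0 s)$ fails to be \emph{empirically} $s$-subgaussian only with probability a power of $n$ strictly below $-2$, which is what makes the discarded mass survive multiplication by the crude worst-case bound $\|\hat\theta^n-\theta^n\|^2\lesssim nT_n^2=n\,\polylog(n)$ and by the heavy-tailed $\|\theta^n\|^2$ (the latter forcing a high-order moment rather than a second moment) and still be $O(s/n)$. Everything else is the soft coupling already used for \prettyref{prop:regret-comp}.
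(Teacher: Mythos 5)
Your proof follows the same high-level coupling-and-conditioning scheme as the paper's: draw $\theta^n\iiddistr G$ from a prior at a smaller scale, let $E$ be the event that the empirical distribution $G_{\theta^n}$ lands in the compound class $\SubG(s)$, lower-bound the compound regret by the EB regret conditional on $E$, and absorb the contribution of $E^c$ into the additive slack. The internal estimates, however, are genuinely different. Where the paper bounds $\PP[E^c]$ by applying Rosenthal's inequality to the $8$-th moments of $X_i=\exp(\theta_i^2/(C_1 s))$ (using the characterization of the subgaussian norm via $\EE\exp(\theta^2/(Cs))\le 2$), you instead observe that $G_{\theta^n}\notin\SubG(s)$ iff the $k$-th largest $|\theta_i|$ exceeds $\sqrt{2s\log(2n/k)}$ for some $k\in[n]$, and run a direct union bound over $k$ with binomial tails; this is arguably more elementary and gives a cleaner constraint on the universal constant ($1/c_0-1\ge 4$ suffices). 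Where the paper truncates the estimator to an $\ell_2$-ball of radius $\sqrt{C_2sn}$ and handles $\EE[\|\hat\theta^n-\theta^n\|^2\indc{E^c}]$ by Cauchy--Schwarz against a fourth-moment bound $\EE\|\hat\theta^n-\theta^n\|^4\lesssim n^2s^2$, you clip coordinatewise to $[-T_n,T_n]$ (valid since every feasible $\theta^n$ lies in this box) and then split $\phi\indc{E^c}$ into a deterministic $2nT_n^2\,\PP[E^c]$ piece and a $2n\,\EE_G[\theta_1^2\indc{E^c}]$ piece, the latter via H\"older with exponent $\log n$. Both routes close with room to spare since $\PP[E^c]=O(n^{-4})$. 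The only wrinkle worth tidying up is that one should check the union-bound characterization handles the boundary cases of the empirical tail (it does, since the constraint is binding only at $t$ approaching each $|\theta|_{(k)}$ from below and only for $t\ge\sqrt{2s\log 2}$), and that $\EE[\phi\indc{E^c}]$ must be bounded \emph{uniformly} over $G\in\SubG(c_0s)$ before taking $\sup_G$ --- which your estimates do give, since they depend on $G$ only through its subgaussian parameter. Overall: correct, same skeleton, with independently chosen (and sound) technical flesh.
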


\apxonly{
Finally, we mention that~\cite{greenshtein2009asymptotic} gives conditions under which
$\TotRegretComp_n' \le \TotRegretComp_n + O(1)$ (satisfied by the normal means problem and
$\Theta$--compact \cite[Cor.~5.2]{greenshtein2009asymptotic}). It would be natural to conjecture that also $\TotRegretComp_n \le \TotRegret_n
+ O(1)$. A possible way to show this would be to compare, for a general permutation-invariant
estimator $f:\mreals^n \to \Theta^n$ the following two quantities: For any empirical distribution $\hat
G_n$ let $U(G_n)$ be the uniform distribution over all $\theta^n$'s with a given $\hat
G_n$, then 
$$ \EE_{\theta^n \sim U(G_n)}[\|\sigma\theta^n
- f(Y^n)\|^2]  = \EE_{\theta^n \simiid G_n}[\|\theta^n
- f(Y^n)\|^2 + o(n) \qquad ? $$
Unfortunately, in general even for compact $\Theta$ these two can differ by $\Omega(n)$. 
Consider the normal means problem with $\Theta=[-1,1]$ and let $G_n = {1\over 2}
\delta_{-1} + {1\over 2} \delta_{+1}$ (for $n$--even). Then if $\theta^n \sim U(G_n)$ we
have
$$ (\theta_1, Y_1, {1\over \sqrt{n}} \sum_{i} Y_i) \stackrel{d}{\to} (A,B, Z)\,,$$
where $A$ is uniform on $\pm 1$ and given $A$ we have $B\sim \mathcal{N}(A,1)$ and $Z\sim
\mathcal(0,1)$ independent of $(A,B)$. At the same time under $\theta^n \simiid G_n$ we have 
$$ (\theta_1, Y_1, {1\over \sqrt{n}} \sum_{i} Y_i) \stackrel{d}{\to} (A,B,\sqrt{2}
Z)\,,$$
where $\sigma_2 \neq \sigma_1$. Thus, if we set $\hat
\theta_j = t(Y_j, {1\over \sqrt{n}}\sum_i Y_i)$ and $t(y, s) = \mathrm{sign y} 1\{s>1\}$ then 
$$ \EE_{\theta^n \sim U(G_n)}[\|\theta^n - \hat \theta^n\|^2] - \EE_{\theta^n \simiid \hat
G_n}[\|\theta^n - \hat \theta^n\|^2] = \Omega(n)\,.$$
This shows that a generic reduction $\TotRegretComp_n \le \TotRegret_n + o(n)$ is not possible.}

\subsection{Related work}
\label{sec:related}

The field of empirical Bayes and compound decision is exceedingly rich in terms of both theoretical and methodological development. 
In this paper we have focused on the specific problem of \emph{estimation} under the squared error
in both the normal and Poisson model, with the main objective of determining the optimal regret for various nonparametric classes of priors.
Next we review the existing literature that are directly related to our main results. Later in \prettyref{sec:discuss} we discuss a few different formulations and perspectives in empirical Bayes in connection with the present paper.

As mentioned earlier, the best regret lower bound before this work is $\Omega(1)$ \cite[Theorem 2.2]{li2005convergence}, which is shown by a two-point argument (Le Cam's method), namely, choosing two priors which differ by $\frac{1}{\sqrt{n}}$ in mean and whose corresponding mixture distributions has statistical distance (e.g.~KL divergence) $O(1/n)$.
As well-known in the high-dimensional statistics literature, two-point method often fails to capture the correct dependency on the model complexity. Indeed, in order to show superconstant regret, it is necessary to consider many hypotheses and Assouad's lemma provides a principled way to do so; see \prettyref{sec:general} for a general program of proving regret lower bound.

For regret upper bound in the Poisson model, the only quantitative result appears to be \cite{BGR13} analyzing the original Robbins estimator with Poissonized sample size, which turns out to be optimal in view of \prettyref{th:poisson}. For the normal mean model, a lot more is known but none meets the lower bound in \prettyref{th:gaussian}.
The state of the art is achieved by the Generalized Maximum Likelihood Empirical Bayes (GMLEB) method in \cite{jiang2009general}. 
	The idea of GMLEB is to first estimate a prior using the NPMLE, then using the (regularized) Bayes rule with this estimated prior.
	Various regret bounds are obtained under moment or subgaussian assumptions on the prior by a reduction from the Hellinger risk of density estimation to regret \cite[Theorem 3 (ii)]{jiang2009general}, and leveraging density estimation results for Gaussian mixtures \cite{ghosal.vdv,zhang2009generalized}. In particular, for the subgaussian class, it is shown that 
	$\TotRegret(\SubG(C)) = O(\log^5 n)$ for constant $C$. (In comparison, \prettyref{th:gaussian} shows $\TotRegret(\SubG(C)) = \Omega(\log^2 n)$.)
	Furthermore, these upper bounds on the regret (additive error) are extended to the compound setting \cite[Theorems 1 and 5]{jiang2009general} leading to attaining the Bayes risk (under the empirical distribution as the prior) up to multiplicative factor of $1+o(1)$, referred 
to as \emph{adaptive ratio optimality}, provided that the Bayes risk is at least $\Omega(\frac{\log^5 n}{n})$.
	\apxonly{Note that in the compound setting \cite{jiang2009general} does not produce an additive upper bound like $\TotRegretComp(\SubG(C)) = O(\log^5 n)$, because the }


In addition to GMLEB, kernel-based EB estimator is also commonly used. 
Note that unlike the discrete Poisson model, there exists no unbiased estimator for the mixture density, so there is no counterpart of Robbins estimator \prettyref{eq:robbins} for the Gaussian model. Nevertheless, the form of the Bayes estimator \prettyref{eq:bayes-gau} motivates using kernel-based estimators in place of the mixture density and its derivative.
Using polynomial kernels, Singh \cite{Singh79} obtained upper bound for general exponential families, which, for the bounded normal means problem, leads to $n^{o(1)}$ upper bound for the total regret, which is subsequently improved to $O((\log n)^8)$ in \cite[Example 1]{li2005convergence} by using polynomials of degree that scales logarithmically in $n$. 
On the other hand, in view of the analyticity of the Gaussian mixture, \cite{zhang2009generalized} uses sinc kernel to achieve adaptive ratio optimality and a total regret that scales as $\sqrt{n}$. Similar result is also obtained in \cite{brown2009nonparametric} using Gaussian kernel under stronger assumptions.

Another line of research aims to obtain regret bound under regularity assumptions on the prior, e.g., by imposing conditions on the Fourier transform $\tilde g(\omega)$ of the prior density $g$. Pensky \cite{pensky1999nonparametric} constructed a wavelet-based EB estimator $\tilde \theta(\cdot)=\tilde \theta(\cdot;Y_1,\ldots,Y_n)$, which, under the assumption that $\int |\tilde g(\omega)|^2 (\omega^2+1)^\alpha < \infty$ for some $\alpha>0$, satisfies $\Expect[(\tilde \theta(y)-\theta_G(y))^2] = O(\frac{(\log n)^{3/2}}{n})$ uniformly for $y$ in any compact interval. For the related testing problem, Liang \cite{liang2004optimal} showed that if $\tilde g$ has bounded support then the parametric rate $O(\frac{1}{n})$ is achievable by a kernel-based EB estimator. In this paper, we only impose tail conditions on the prior rather than smoothness of the prior density.


	\subsection{Notations}
	\label{sec:notations}
	
	Given a measurable space $(\calX,\calF)$, let $\calP(\calX)$ denote the collection of all probability measures on $(\calX,\calF)$.
	Throughout this paper, $\calX$ will be taken to be a subset of $\reals$ and $\calF$ the Borel $\sigma$-algebra.	
	The Lebesgue measure on $\reals$ is denoted by $\Leb$.
	Given a measure $\mu$ on $\calX$, let $L_2(\mu)$ 
denote the collection of all square-integrable functions $f: \calX \to\reals$ with $\|f\|_{L_2(\mu)} \triangleq (\int_{\calX} f(x)^2 \mu(dx))^{1/2}$ and 
inner product $(f,g)_{L_2(\mu)} \triangleq \int_{\calX} f(x)g(x) \mu(dx)$.
	
	Throughout the paper we adopt the vector notation $y^n\triangleq (y_1,\ldots,y_n)$ which applies to both deterministic and random vectors.	
	We use standard asymptotic notations: For any sequences $\{a_n\}$ and $\{b_n\}$ of positive numbers, we write $a_n \gtrsim b_n$ if $a_n\geq cb_n$ holds for all $n$ and some absolute constant $c > 0$, $a_n\lesssim b_n$ if $a_n \gtrsim b_n$, and $a_n \asymp b_n$ if both $a_n\gtrsim b_n$ and $a_n\lesssim b_n$ hold; the notations $O(\cdot)$, $\Omega(\cdot)$, and $\Theta(\cdot)$ are similarly defined. We write $a_n=o(b_n)$ or $b_n=\omega(b_n)$ or $a_n\ll b_n$ or $b_n \gg a_n$ if $a_n/b_n\to 0$ as $n\diverge$.
	
	\subsection{Organization}
	\label{sec:org}
	
	The rest of the paper is organized as follows.
	\prettyref{sec:basic} discusses the basic reduction for EB regret and introduces a general program for proving regret lower bounds.
	In \prettyref{sec:pf} we apply this program to show the lower bounds in Theorems \ref{th:gaussian} and \ref{th:poisson} for both the Gaussian and Poisson model, making use of the analytical tools developed in Appendices \ref{apx:normal} and \ref{apx:poisson}.
\prettyref{sec:discuss} concludes the paper with a discussion of related problems and open questions.
Regret upper bound for Robbins estimator is shown in \prettyref{app:robbins}.
With most of the paper confusing on the EB setting, proofs for the compound setting is deferred to \prettyref{app:comp}.
As a side application, the construction in the present paper can be used to obtain improved or new
lower bounds for \textit{mixture density estimation}; \prettyref{app:density} collects these results.

\section{Preliminaries}
	\label{sec:basic}
	
	\subsection{Reduction to estimating regression functions}
	\label{sec:reduction}
	
	In this subsection we describe the basic reduction relating the total regret~\eqref{eq:avgregret} in the empirical Bayes setting
	to the problem of estimating a regression function, the latter of which is closely related to the ``density estimation'' problem in the Gaussian or Poisson mixture model. 
	In turn, the rest of our paper proceeds in the spirit of proving the lower bounds on the density estimation, namely by building up a
	large collection of densities and applying the Assouad's lemma to show estimation lower bound; cf. Section~\ref{sec:general} for details.

First, in addition to the total regret~\eqref{eq:avgregret} we define the individual regret.
	\begin{equation}
\Regret_n(\calG) \triangleq
\inf_{\hat \theta} \sup_{G \in \calG} \sth{ \Expect_G\qth{(\hat \theta(Y_1,\ldots,Y_n) - \theta_n)^2  } - \mmse(G)} \,.
\label{eq:regret}
\end{equation}
Here $Y_1,\ldots,Y_{n-1}$ can be viewed as training data from which we learn an estimator and apply it on a fresh (unseen) data point $Y_n$ in order to predict $\theta_n$.
For example, for the Robbins estimator $\{\tilde \theta_j\}$ in \prettyref{eq:robbins}, the learned estimator $\hat\theta_{\rm Robbins}(\cdot)=\hat\theta_{\rm Robbins}(\cdot;Y_1,\ldots,Y_{n-1})$ is given by
\[
\hat\theta_{\rm Robbins}(y) = (y+1) \frac{N_{n-1}(y+1)}{N_{n-1}(y)+1}
\]
where $N_{n-1}(y) = \sum_{i=1}^{n-1} \indc{Y_i=y}$ is the number of occurrences of $y$ in the first $n-1$ observations, so that $\tilde \theta_n = \hat\theta_{\rm Robbins}(Y_n;Y_1,\ldots,Y_{n-1})$.
 Note that the extra ``add one'' in the denominator is reminiscent of the Laplace estimator (cf.~\cite[Sec.~13.2]{cover}); this additional smoothing ensures a nonzero denominator which is clearly needed for achieving a non-trivial regret.
Similar regularization has also been applied in the Gaussian model to the maximum likelihood method \cite{jiang2009general}.

There is a simple relation between $\TotRegret$ and $\Regret$:
	\begin{lemma}
\label{lmm:avgregret}	
	\[
	\Regret_n(\calG) = \frac{1}{n} \TotRegret_n(\calG).
	\]
\end{lemma}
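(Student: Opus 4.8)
The plan is to prove $\TotRegret_n(\calG) = n\,\Regret_n(\calG)$ by establishing the two inequalities separately, each resting only on the fact that under a product prior $G^{\otimes n}$ the pairs $(\theta_1,Y_1),\ldots,(\theta_n,Y_n)$ are i.i.d., hence exchangeable.

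\textbf{Step 1 ($\TotRegret_n(\calG)\le n\,\Regret_n(\calG)$).} Given any univariate rule $\hat\theta=\hat\theta(Y_1,\ldots,Y_n)$ targeting $\theta_n$, I would build a multivariate estimator $\hat\theta^n=(\hat\theta^n_1,\ldots,\hat\theta^n_n)$ coordinatewise by feeding $\hat\theta$ the data with $Y_j$ moved into the last slot: $\hat\theta^n_j(Y^n)\eqdef\hat\theta(Y_{\pi_j(1)},\ldots,Y_{\pi_j(n)})$, where $\pi_j$ is the transposition swapping $j$ and $n$. Since the pairs are i.i.d., the joint law of $(Y_{\pi_j(1)},\ldots,Y_{\pi_j(n)},\theta_{\pi_j(n)})$ equals that of $(Y_1,\ldots,Y_n,\theta_n)$, whence $\Expect_G[(\hat\theta^n_j-\theta_j)^2]=\Expect_G[(\hat\theta-\theta_n)^2]$ for every $j$. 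Summing over $j\in[n]$ and subtracting $n\,\mmse(G)$ yields $\Expect_G[\|\hat\theta^n-\theta^n\|^2]-n\,\mmse(G)=n\big(\Expect_G[(\hat\theta-\theta_n)^2]-\mmse(G)\big)$; taking $\sup_{G\in\calG}$ and then $\inf_{\hat\theta}$ gives the claim.

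\textbf{Step 2 ($n\,\Regret_n(\calG)\le\TotRegret_n(\calG)$).} Given any multivariate estimator $\hat\theta^n=(\hat\theta^n_1,\ldots,\hat\theta^n_n)$, I would symmetrize over the symmetric group $S_n$. Observing that $[\hat\theta^n(Y_{\sigma(1)},\ldots,Y_{\sigma(n)})]_{\sigma^{-1}(j)}$ is always a guess of $\theta_j$, set $\bar\theta_j(Y^n)\eqdef\frac1{n!}\sum_{\sigma\in S_n}[\hat\theta^n(Y_{\sigma(1)},\ldots,Y_{\sigma(n)})]_{\sigma^{-1}(j)}$. By convexity of $t\mapsto t^2$ (Jensen) and then exchangeability (the law of $(Y_{\sigma(1)},\ldots,Y_{\sigma(n)},\theta_{\sigma(1)},\ldots,\theta_{\sigma(n)})$ equals that of $(Y^n,\theta^n)$) one gets $\Expect_G[(\bar\theta_j-\theta_j)^2]\le\frac1{n!}\sum_{\sigma}\Expect_G[(\hat\theta^n_{\sigma^{-1}(j)}-\theta_{\sigma^{-1}(j)})^2]=\frac1n\Expect_G[\|\hat\theta^n-\theta^n\|^2]$, the last step because for fixed $j$ the map $\sigma\mapsto\sigma^{-1}(j)$ hits each index of $[n]$ exactly $(n-1)!$ times. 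Taking $\hat\theta\eqdef\bar\theta_n$, subtracting $\mmse(G)$, and passing to $\sup_{G\in\calG}$ then $\inf$ over $\hat\theta^n$ gives $\Regret_n(\calG)\le\frac1n\TotRegret_n(\calG)$.

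I do not anticipate a genuine obstacle: both directions are ``soft''. The only care needed is the permutation bookkeeping — ensuring that the correct coordinate is estimated by the correct component in both constructions — and invoking Jensen's inequality for the symmetrization step; measurability of $\hat\theta^n$ (resp.\ of $\bar\theta_n$) is automatic, being a composition, resp.\ a finite average, of measurable maps.
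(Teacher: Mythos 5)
Your proof is correct and matches the paper's in all essentials. Step~1 is exactly the paper's argument: transpose $Y_j$ into the last slot, apply the univariate rule coordinatewise, and appeal to exchangeability of the i.i.d.\ pairs $(\theta_i,Y_i)$ to equate the per-coordinate risks. Step~2 reaches the same conclusion by a slightly different device: the paper defines a \emph{randomized} univariate estimator that draws $i\in[n]$ uniformly, swaps $Y_i\leftrightarrow Y_n$, and applies $\hat\theta^n_i$, obtaining by exchangeability the \emph{exact} identity $\Expect[(\tilde\theta-\theta_n)^2]=\tfrac1n\Expect[\|\hat\theta^n-\theta^n\|^2]$; this implicitly relies on the (true, by convexity of quadratic risk) fact that allowing randomization does not lower the infimum in \prettyref{eq:regret}. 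You instead symmetrize $\hat\theta^n$ over all of $S_n$ to get a \emph{deterministic} competitor $\bar\theta_n$ and invoke Jensen's inequality, trading the paper's exact equality for the one-sided bound that is all that is needed; this has the small advantage of being self-contained, never having to reason about randomized estimators. Both directions rest on the same exchangeability, and your permutation bookkeeping (counting $(n-1)!$ preimages of each $i$ under $\sigma\mapsto\sigma^{-1}(j)$) is right.
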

\begin{proof}
	Given any $\hat\theta$ for \prettyref{eq:regret}, define $\tilde\theta^n=(\tilde\theta_1,\ldots,\tilde\theta_n)$ for \prettyref{eq:avgregret} by
	$\tilde\theta_i=\hat\theta(Y_{\backslash i},Y_i)$. By symmetry, $\Expect[(\tilde\theta_i-\theta_i)^2] = \Expect[(\hat\theta-\theta_n)^2]$ for all $i$. 
	This shows that $\TotRegret_n(\calG)  \leq n \cdot \Regret_n(\calG)$.
	
	The other direction is less obvious. 
	Given any $\hat\theta^n=(\hat\theta_1,\ldots,\hat\theta_n)$ for \prettyref{eq:avgregret}, 
	define a \emph{randomized} estimator $\tilde\theta$ for \prettyref{eq:regret} as follows by setting 
	$\tilde\theta = \hat\theta_i(Y_1,\ldots,Y_{i-1},Y_n,Y_{i+1},\ldots,Y_{n-1},Y_i)$ with probability $\frac{1}{n}$ for each $i\in[n]$.
	In other words, we randomly select $i$ uniformly at random from $[n]$, then swap $Y_i$ and $Y_n$ in the sample and apply the strategy $\hat\theta_i$.
	Then by exchangeability,
	\[
	\Expect[(\tilde\theta-\theta_n)^2] = \frac{1}{n} \sum_{i=1}^n \Expect[(\hat\theta_i(Y_1,\ldots,Y_n,\ldots,Y_i)-Y_n)^2] = 
	\frac{1}{n} \sum_{i=1}^n \Expect[(\hat\theta_i(Y_1,\ldots,Y_n)-Y_i)^2] = \frac{1}{n} \Expect[\|\hat\theta^n-\theta^n\|^2].
	\]
	This shows that $\Regret_n(\calG) \leq \frac{1}{n} \TotRegret_n(\calG)$.
\end{proof}

By the orthogonality principle, the average quadratic risk of a given estimator $\hat\theta$ can be decomposed as 
$$\Expect_G[(\hat\theta-\theta)^2] = \mmse(G) + \Expect_G[(\hat\theta-\hat\theta_G)^2]\,.$$
Thus the regret can be viewed the quadratic loss of estimating the Bayes rule as a function, with the extra twist that the error is averaged with respect to the mixture distribution $P_G$ that depends on the underlying $G$. In other words,
\begin{align}
\Regret_n(\calG) 
= & ~ \inf_{\hat \theta} \sup_{G \in \calG} \Expect_G\qth{\pth{\hat \theta(Y_1,\ldots,Y_n) - \hat\theta_G(Y_n)}^2  } \label{eq:regret1}	\\
= & ~ \inf_{\hat \theta} \sup_{G \in \calG} \Expect_G\qth{\|\hat \theta - \hat\theta_G\|^2_{L_2(f_G)}  } \label{eq:regret2}	
\end{align}
where in \prettyref{eq:regret2} $\hat \theta(\cdot)$ is understood as 
$\hat \theta(Y_1,\ldots,Y_{n-1},\cdot)$.
Note a conceptual shift: with these reductions we can think of ${1\over n} \TotRegret_n$ as a
measure of the quality of estimation of the regression function after receiving $n-1$ training
samples.

\subsection{Truncation of priors}
\label{sec:truncate}

Frequently we are interested in priors with compact support, which can be unwieldy to use in the lower bound construction.
Next we provide a general truncation lemma that translates regret lower bound for priors with uniform tail bound (e.g.~subgaussian or subexponential) to that for compactly supported priors:

\begin{lemma}
\label{lmm:truncation}		
	Given $a>0$, let $\calG=\calP([-a,a])$ denote all probability measures supported on the interval $[-a,a]$.
	Let $\calG'$ be a collection of priors on $\reals$, such that $\sup_{G\in\calG'} G([-a,a]^c) \leq \epsilon \leq \frac{1}{2}$ for some $\epsilon=\epsilon(a)$, and 
	$\sup_{G\in\calG'} \Expect_{G}[\theta^4] \leq M$.
	Then
	\begin{equation}
	\Regret_n(\calG) \geq \Regret_n(\calG')  - 6 \sqrt{(M+a^4) n \epsilon}.
	\label{eq:truncation}
	\end{equation}	
\end{lemma}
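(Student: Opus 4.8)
The plan is to pass from the compactly supported class $\calG=\calP([-a,a])$ to the tail-controlled class $\calG'$ by a coupling argument on priors together with the estimation-theoretic reformulation of regret in \prettyref{eq:regret1}. First I would fix an arbitrary $G\in\calG'$ and let $G^*$ be its conditional version given $\{|\theta|\le a\}$, i.e.\ $G^*(\cdot)=G(\cdot\cap[-a,a])/G([-a,a])$, which lies in $\calG$. The two priors are close in total variation: $\TV(G,G^*)\le \epsilon$ (using $\epsilon\le\frac12$). Consequently the product mixtures satisfy $\TV(P_G^{\otimes n},P_{G^*}^{\otimes n})\le \TV(G^{\otimes n},G^{*\otimes n})\le n\epsilon$ by subadditivity of TV over product measures and the data-processing inequality for the channel $\theta\mapsto Y$. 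This is the device that lets any estimator's performance on $G$ be controlled by its performance on $G^*$, up to an error governed by $n\epsilon$ and the magnitude of the loss.

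Next I would use the reformulation $\Regret_n(\calG)=\inf_{\hat\theta}\sup_{G}\Expect_G[(\hat\theta(Y^n)-\hat\theta_G(Y_n))^2]$ from \prettyref{eq:regret1}. Let $\hat\theta^*$ be a near-optimal estimator for $\calG$. For the given $G\in\calG'$, I want to compare $\Expect_G[(\hat\theta^*-\hat\theta_G)^2]$ with $\Expect_{G^*}[(\hat\theta^*-\hat\theta_{G^*})^2]$. There are two sources of discrepancy: (i) the change of measure from $P_{G^*}^{\otimes n}$ to $P_G^{\otimes n}$, handled by the TV bound above; and (ii) the change of target from $\hat\theta_{G^*}$ to $\hat\theta_G$. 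For (ii) I would note that both Bayes estimators, and the true parameter, are bounded or have controlled moments: under $G^*$ we have $|\theta|\le a$ hence $|\hat\theta_{G^*}(y)|\le a$; under $G'$ the fourth-moment bound $\Expect_G[\theta^4]\le M$ controls $\Expect_G[(\theta-\hat\theta_G(Y))^2]=\mmse(G)$ and, more to the point, one can bound $\Expect_G[(\hat\theta_{G^*}(Y_n)-\hat\theta_G(Y_n))^2]$ or simply absorb the mismatch by reverting to the risk form $\Expect_G[(\hat\theta-\theta)^2]=\mmse(G)+\Expect_G[(\hat\theta-\hat\theta_G)^2]$ and tracking $\mmse$ differences. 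In practice I expect the cleanest route is: apply $\hat\theta^*$ (truncated to $[-a,a]$, which only helps against targets in $[-a,a]$) and bound
\[
\Expect_G[(\hat\theta^*-\hat\theta_G)^2] \ge \Expect_{G^*}[(\hat\theta^*-\hat\theta_{G^*})^2] - |\Delta_{\text{meas}}| - |\Delta_{\text{target}}|,
\]
where $\Delta_{\text{meas}}$ is bounded by $(\text{range of loss})\cdot \TV(P_G^{\otimes n},P_{G^*}^{\otimes n})$ and Cauchy–Schwarz converts the $L_\infty$ bound on the loss times $n\epsilon$ into the stated $\sqrt{(M+a^4)n\epsilon}$ form, while $\Delta_{\text{target}}$ is controlled by the closeness of $\hat\theta_G$ and $\hat\theta_{G^*}$ in $L_2(f_G)$, again via the $\TV(G,G^*)\le\epsilon$ bound and the moment assumption.

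Taking the supremum over $G\in\calG'$ on the left and noting that $\{G^*:G\in\calG'\}\subseteq\calG$ so that $\inf_{\hat\theta}\sup_{G^*}\Expect_{G^*}[(\hat\theta-\hat\theta_{G^*})^2]\le \Regret_n(\calG)$ (after the harmless truncation of the estimator), one rearranges to obtain $\Regret_n(\calG)\ge \Regret_n(\calG') - 6\sqrt{(M+a^4)n\epsilon}$, with the constant $6$ coming from combining the two discrepancy terms with explicit Cauchy–Schwarz constants. The main obstacle I anticipate is bookkeeping the target-mismatch term $\Delta_{\text{target}}$ cleanly: one must show $\Expect_{G}[(\hat\theta_G(Y_n)-\hat\theta_{G^*}(Y_n))^2]$ is $O(\sqrt{n\epsilon}\cdot\text{poly}(a,M^{1/4}))$, which requires either a direct estimate on how the posterior mean moves under an $\epsilon$-perturbation of the prior (controllable since the Bayes estimators are $L_\infty$-bounded against the relevant targets) or, more robustly, working throughout with the raw quadratic risk $\Expect[(\hat\theta-\theta)^2]$ and its decomposition, so that only $\mmse(G)-\mmse(G^*)$ needs bounding — and that difference is again $O(\sqrt{n\epsilon})$-type once the fourth moment and TV bound are in hand. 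Everything else is a routine Cauchy–Schwarz and triangle-inequality exercise.
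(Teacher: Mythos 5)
Your fallback intuition at the end --- work with the raw quadratic risk $\Expect_G[(\hat\theta-\theta_n)^2]$ so that the target-mismatch term vanishes --- is indeed the paper's route, but the execution differs in a way that matters, and your primary TV-based plan has a genuine gap. The paper's key observation, which your sketch does not contain, is that the conditional joint law of $(\theta^n,Y^n)$ given the event $E\triangleq\{|\theta_i|\le a,\ i=1,\dots,n\}$, when $\theta^n\iiddistr G$, is \emph{exactly} the joint law with $\theta^n\iiddistr G_a$ (the restriction of $G$ to $[-a,a]$), because $E$ depends only on $\theta^n$ and the $\theta_i$ are independent. Hence $\Expect_{G_a}[(\hat\theta-\theta_n)^2]=\Expect_G[(\hat\theta-\theta_n)^2\mid E]$ holds with no error term at all: there is no target mismatch and no measure coupling to control. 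Passing from this conditional expectation to the unconditional one is then done by splitting on $E$ and Cauchy--Schwarz on the tail:
\begin{equation*}
\Expect_G\bigl[(\hat\theta-\theta_n)^2\Indc_{E^c}\bigr]\le\sqrt{\Expect_G\bigl[(\hat\theta-\theta_n)^4\bigr]\,\Prob_G[E^c]}\le\sqrt{8(M+a^4)\,n\epsilon},
\end{equation*}
using $|\hat\theta|\le a$ (wlog for priors in $\calP([-a,a])$), the fourth-moment hypothesis, and $\Prob[E^c]\le n\epsilon$. Notice it is the fourth moment, \emph{not} any $L_\infty$ bound on the loss, that pairs with $\Prob[E^c]$. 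The remaining $\mmse$ discrepancy is handled by the monotonicity fact $\mmse(G_a)\le\frac{1}{1-\epsilon}\mmse(G)$, costing only $O(\epsilon\sqrt M)$, which is dominated by $\sqrt{n\epsilon}$.

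By contrast, your primary plan has two concrete problems. First, the squared loss $(\hat\theta-\theta)^2$ (and likewise $(\hat\theta-\hat\theta_G)^2$) is unbounded under $\calG'$ (e.g.\ subgaussian priors have unbounded Bayes estimators), so the quantity ``$(\text{range of loss})\cdot\TV(P_G^{\otimes n},P_{G^*}^{\otimes n})$'' is vacuously infinite; the Cauchy--Schwarz you invoke to repair this is not the same inequality and does not combine with TV in the advertised way --- what is available is a fourth-moment bound paired with an indicator, which is precisely what the paper uses instead of TV. Second, even granting a moment/TV hybrid, you would still face the target-mismatch term $\Expect_G[(\hat\theta_G-\hat\theta_{G^*})^2]$, which you correctly flag as the main obstacle but whose control is genuinely nontrivial (and your estimate of ``$O(\sqrt{n\epsilon})$'' for it is misattributed --- this term does not involve $n$ at all). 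The conditioning identity above makes this term never appear, which is the real reason the raw-risk route works cleanly.
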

\begin{proof}
Define $E\eqdef \{|\theta_i|\leq a, i=1,\ldots,n\}$ and notice that for any estimator $\hat
\theta$ taking values in $[-a,a]$ and any prior $G \in \calG'$ we have
	\begin{align}
	\Expect_{G}[(\hat\theta-\theta_n)^2]
	= & ~ \Expect_{G}[(\hat\theta-\theta_n)^2 \Indc_{E}] + \Expect_{G}[(\hat\theta-\theta_n)^2
	\Indc_{E^c}]   \nonumber\\
	\leq & ~ 	\Expect_{G}[(\hat\theta-\theta_n)^2 \mid E] +
	\sqrt{\Expect_{G}[(\hat\theta-\theta_n)^4] \cdot \Prob_G[E^c]}   \nonumber\\
	\leq & ~ 	\Expect_{G}[(\hat\theta-\theta_n)^2 \mid E] + \sqrt{8(M+a^4) \cdot n
	\epsilon}. \label{eq:tr_1}
	\end{align}
For any $G$, let $G_a$ denote its restriction on $[-a,a]$, i.e., $G_a(A) = \frac{G(A\cap[-a,a])}{G([-a,a])}$.
Then, we get the following chain of inequalities:
	\begin{align*}
	\Regret_n(\calG)
	= & ~ \inf_{|\hat \theta|\leq a} \sup_{G \in \calG} \Expect_G\qth{(\hat \theta(Y_1,\ldots,Y_n) - \theta_n)^2  } - \mmse(G)  \\
	\geq & ~ \inf_{|\hat \theta|\leq a} \sup_{G \in \calG'} \Expect_{G_a}\qth{(\hat \theta(Y_1,\ldots,Y_n) - \theta_n)^2  } - \mmse(G_a)  \\
	= & ~ \inf_{|\hat \theta|\leq a} \sup_{G \in \calG'} \Expect_{G} \qth{(\hat
	\theta(Y_1,\ldots,Y_n) - \theta_n)^2 \Big| E } - \mmse(G_a)  \\
	\stepa{\geq} & ~ \inf_{|\hat \theta|\leq a} \sup_{G \in \calG'} \Expect_{G} \qth{(\hat
	\theta(Y_1,\ldots,Y_n) - \theta_n)^2 \Big| E }	- \frac{1}{1-\epsilon}\mmse(G)  \\
	\stepb{\geq} & ~ \inf_{|\hat \theta|\leq a} \sup_{G \in \calG'} \Expect_{G} \qth{(\hat \theta(Y_1,\ldots,Y_n) - \theta_n)^2} - \sqrt{8(M+a^4) (1-(1-\epsilon)^n)} 	- \frac{1}{1-\epsilon}\mmse(G)  \\
	\stepc{\geq} & ~  \Regret_n(\calG') - \sqrt{8(M+a^4) n \epsilon} - 2 \epsilon \sqrt{M},
	\end{align*}
	where (a) follows from the simple fact (cf.~e.g.~\cite[Lemma
	2]{mmse.functional.IT})
	\begin{equation}
	\mmse(G_a) \leq \frac{1}{G([-a,a])} \mmse(G);
	\label{eq:mmse-truncate}
	\end{equation}
	(b) is from~\eqref{eq:tr_1};
	(c) follows from $\mmse(G) \leq \Expect_G[\theta^2] \leq \sqrt{M}$ and
	${1\over 1-\epsilon} \le 1 + 2\epsilon$ due to $\epsilon \le 1/2$.
\end{proof}

\subsection{General program for regret lower bound}
\label{sec:general}

We describe our general method for proving regret lower bounds next. 
Select a prior $G_0$ and let $f_0 \equiv f_{G_0} = \int f_\theta G_0(d\theta)$ denote the induced mixture density.
We aim to prove regret bound by restricting to priors whose likelihood ratio relative to $G_0$ is bounded from above and below.
Specifically, we will construct a collection of perturbations around $G_0$ whose induced perturbations on the regression function satisfy an approximate orthogonality property as required by Assouad's lemma.
To describe this program, let us first introduce the necessary notations.

Define an operator $K$ that maps a bounded measurable function $r$ to $Kr$ given by
\begin{equation}\label{eq:k_def}
	Kr(y) \eqdef \EE_{G_0}[r(\theta) | Y=y] = \frac{\int r(\theta) f_\theta(y) G_0(d\theta)}{f_0(y)}\,.
\end{equation}
In other words, $K$ is an integral operator with kernel function given by the posterior density.
Notice that for the prior $G$ defined by $dG = r(\theta) dG_0$, the corresponding mixture density is given by
\begin{equation}
f_G(y) = f_0(y) \cdot Kr(y)\,.
\label{eq:fpi-K}
\end{equation} 

Next, fix an arbitrary bounded function $r$ and for sufficiently small $\delta$, define the distribution $G_\delta$ by
	$$ d G_\delta \triangleq {1\over 1+\delta \int r dG_0} (1+\delta r) dG_0\,.$$
Denote by 
\begin{equation}
K\theta(y) = \EE_{G_0}[\theta|Y=y]
\label{eq:Kid}
\end{equation}
to be the result of applying $K$ to the identity function $\theta\mapsto \theta$. 
By the Bayes rule and using \prettyref{eq:fpi-K}, the regression function corresponding to $G_\delta$ satisfies:
\begin{align}
		\hat\theta_{G_\delta}(y)\equiv
		\EE_{G_\delta} [\theta|Y=y] &= \EE_{G_0}\left[ \theta {1+\delta r(\theta)\over 1+\delta Kr(y)}
		\middle | Y=y\right]\nonumber\\
		& = {K\theta + \delta K(\theta r)\over 1+\delta Kr} (y) \nonumber\\
		& = K\theta (y) + \delta {K(\theta r) - (K\theta) (Kr) \over 1+\delta Kr} (y) \nonumber\\
	& = K\theta(y) + \delta K_1 r(y) + \delta^2 {1\over 1+\delta K r(y)} (Kr)(y) \cdot (K_1 r)(y)\,, \label{eq:tu_kk1}
\end{align}
where we introduced an operator 
\begin{equation}\label{eq:k1_def}
	K_1 r \triangleq K(\theta r) - (K\theta) (Kr)\,.
\end{equation}
Note that the mapping from $r$ to the regression function $\hat\theta_{G_\delta}$ is nonlinear, but can be expressed in terms of the linear operators $K$ and $K_1$; these two operators play crucial roles in the lower bound construction.
In applications below it will often be that $K$ is a convolution-like operator, while $K_1$ will
be a differentiation operator (followed by convolution). The emergence of differentiation may
appear surprising at first, so it may be satisfying to notice from \prettyref{eq:tu_kk1}
an alternative definition of $K_1$ emphasizing the differentiation explicitly:
$$ K_1 r(y) = \left.{d\over d\delta}\right|_{\delta=0} \EE_{G_\delta} [\theta|Y=y]\,.$$


The main method for proving our lower bounds is contained in the following proposition.
We note that similar strategy based on Assouad's lemma has been used before in \cite{kim2014minimax,kim2020minimax} to show lower bound for estimating Gaussian mixture density. 
	Here the construction is more challenging in view of the non-linear relationship between the regression function and the mixture density (see \prettyref{eq:bayes-gau} and \prettyref{eq:bayes-poi} for example) which requires working with the kernel $K$ and $K_1$ defined above. 
	The dependency of the regression function on ``high-order'' information of the mixture density (such as its derivatives) suggests the rate for individual regret may be strictly slower than the density estimation rate. This turns out to be the case for the Poisson model \cite{JPW21}.

\begin{proposition}\label{prop:ass_gen} Fix a prior distribution $G_0$, constants $a,\tau,\tau_1,\tau_2,\gamma \ge 0$
and $m$ real-valued functions $r_1,\ldots,r_m$ on $\Theta$ with the following properties.
\begin{enumerate}
\item For each $i$, $ \|r_i\|_{\infty} \le a $;

\item For each $i$, $\|K r_i\|_{L_2(f_0)} \le \sqrt{\gamma}$;

\item For any $v \in \{0, \pm 1\}^m$,
	\begin{equation}\label{eq:rg_ag}
		\left\|\sum_{i=1}^m v_i K_1 r_i\right\|^2_{L_2(f_0)} \ge \tau \|v\|_2^2 - \tau_2 
\end{equation}	
\item For any $v\in \{0,1\}^m$, $\|\sum_{i=1}^m v_i K_1 r_i\|^2_{L_2(f_0)} \le \tau_1^2 m $;

\end{enumerate}
Then the regret over the class of priors $\calG = \{G: \left|{dG/dG_0} - 1\right| \le {1\over 2}\}$ satisfies
\begin{equation}\label{eq:propass_g}
	\Regret_n(\calG) \ge C \delta^2 (m(4\tau - \tau_1^2) - \tau_2)\,, \qquad \delta \triangleq \frac{1}{\max(\sqrt{n \gamma},
ma)}\,,
\end{equation}
where $C>0$ is an absolute constant.

\end{proposition}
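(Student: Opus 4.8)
The plan is to run the Assouad program set up just above the statement, with the perturbation directions $r_1,\dots,r_m$ forming the edges of a hypercube of priors. Fix a small absolute constant $c_0\in(0,1/5]$ (chosen last) and put $\delta'=c_0\delta$. For $v\in\{-1,1\}^m$ let $dG_v\propto(1+\delta'\sum_i v_i r_i)\,dG_0$. Since $|\delta'\sum_i v_i r_i|\le \delta' ma\le c_0$ on $\supp G_0$ by hypothesis~(1) and $\delta\le 1/(ma)$, each $G_v$ is a valid probability measure with $|dG_v/dG_0-1|\le 1/2$, hence $G_v\in\calG$; moreover $f_{G_v}=f_0\cdot K(dG_v/dG_0)\ge(1-c_0)f_0$ pointwise by \eqref{eq:fpi-K}. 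By the expansion \eqref{eq:tu_kk1}, the associated Bayes estimator is $\hat\theta_{G_v}=K\theta+\delta'\sum_i v_i K_1 r_i+Q_v$ with a purely quadratic remainder obeying $|Q_v|\le\tfrac54\delta'^2|Kr_v|\,|K_1 r_v|$, where $r_v=\sum_i v_i r_i$. Using $|Kr_v|\le ma$ pointwise (hypothesis~(1)) and $\|\sum_i v_i K_1 r_i\|_{L_2(f_0)}\le 2\tau_1\sqrt m$ (hypothesis~(4) applied to the two sign-classes of $v$), we get $\|Q_v\|_{L_2(f_0)}^2\le \mathrm{const}\cdot\delta'^2\tau_1^2 m\,(\delta' ma)^2\le \mathrm{const}\cdot c_0^2\,\delta'^2\tau_1^2 m$.

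Next I would reduce the regret to a Bayes estimation problem for the regression function. Restricting the supremum in \eqref{eq:regret2} to $\{G_v\}$, bounding $\|\cdot\|_{L_2(f_{G_v})}^2\ge(1-c_0)\|\cdot\|_{L_2(f_0)}^2$, subtracting off the known function $K\theta$, and discarding the deterministic remainders via $\|u-Q_v\|^2\ge\tfrac12\|u\|^2-\|Q_v\|^2$, one obtains
\[
\Regret_n(\calG)\ \ge\ \tfrac{1-c_0}{4}\,\inf_{\hat\phi}\ \tfrac1{2^m}\sum_{v\in\{-1,1\}^m}\EE_{G_v}\!\Big[\big\|\hat\phi-\delta'{\textstyle\sum_i}v_iK_1 r_i\big\|_{L_2(f_0)}^2\Big]\ -\ \tfrac{1-c_0}{2}\max_v\|Q_v\|_{L_2(f_0)}^2 ,
\]
where $\hat\phi=\hat\phi(Y_1,\dots,Y_{n-1},\cdot)$ is an arbitrary estimator. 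The inner quantity is the average (Bayes) risk for estimating the target $\phi_v=\delta'\sum_i v_iK_1 r_i$ under the uniform prior on $v$, which is exactly what Assouad's lemma controls.

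For Assouad I need the two usual ingredients. Separation: applying hypothesis~(3) to $(v-v')/2\in\{0,\pm1\}^m$ gives $\|\phi_v-\phi_{v'}\|_{L_2(f_0)}^2\ge 4\delta'^2\big(\tau\,\rho(v,v')-\tau_2\big)$, with $\rho$ the Hamming distance (in particular $\sum_i\|K_1 r_i\|_{L_2(f_0)}^2\ge\tau m-\tau_2$ after averaging over random signs). Indistinguishability: flipping one coordinate perturbs the mixture density by only $O(\delta'|Kr_i|)$ relative to $f_0$, so $\chi^2(P_{G_{v'}}\,\|\,P_{G_v})\le\mathrm{const}\cdot\delta'^2\|Kr_i\|_{L_2(f_0)}^2\le\mathrm{const}\cdot\delta'^2\gamma$ by hypothesis~(2) (the bound $f_{G_v}\gtrsim f_0$ keeping the $\chi^2$ under control), whence $\TV\big(P_{G_v}^{\otimes(n-1)},P_{G_{v'}}^{\otimes(n-1)}\big)\le\sqrt{\tfrac{n-1}{2}\,\mathrm{const}\cdot\delta'^2\gamma}\le c_0\sqrt{\mathrm{const}}$ since $\delta\le1/\sqrt{n\gamma}$; choosing $c_0$ small makes every neighboring pair have total variation $\le\tfrac12$. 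Assouad's lemma (with a nearest-codeword decoder, using that the squared separation is affine in $\rho$) then yields $\inf_{\hat\phi}\tfrac1{2^m}\sum_v\EE_{G_v}\|\hat\phi-\phi_v\|_{L_2(f_0)}^2\gtrsim\delta'^2(\tau m-\tau_2)$. Plugging this and the remainder bound into the display, recalling $\delta'=c_0\delta$, and using that in the only nontrivial regime $m(4\tau-\tau_1^2)-\tau_2>0$ hypotheses~(3)--(4) (with $v=\ones$) force $\tau m-\tau_2\le\tau_1^2 m<4\tau m$, a direct computation collapses the estimate to the asserted $\Regret_n(\calG)\ge C\delta^2\big(m(4\tau-\tau_1^2)-\tau_2\big)$, the $\tau_1^2 m$ losses being absorbable because $c_0$ (hence the coefficient of $\tau_1^2 m$) can be taken arbitrarily small.

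The step I expect to be most delicate is the control of the nonlinear remainder $Q_v$ together with the constant tracking in the final collapse. Because the map $r\mapsto\hat\theta_{G_\delta}$ is genuinely nonlinear, one cannot simply quote an existing Gaussian/Poisson mixture density-estimation lower bound; the linearization \eqref{eq:tu_kk1} through the operator $K_1$ is what makes Assouad applicable, and it is precisely the $L_2(f_0)$-bound in hypothesis~(4) that lets one estimate $\|Q_v\|_{L_2(f_0)}$. The choice $\delta=1/\max(\sqrt{n\gamma},ma)$ is forced by a two-sided constraint: one needs $ma\,\delta\lesssim1$ both to keep the $G_v$ inside $\calG$ and to render $Q_v$ lower-order (via hypothesis~(1)), and one needs $\sqrt{n\gamma}\,\delta\lesssim1$ to keep the pairwise testing error bounded away from $1$ (via hypothesis~(2)); balancing the two gives the minimum. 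The remaining pieces -- the reduction \eqref{eq:regret2}, the elementary $\chi^2$ estimate, and the textbook form of Assouad's lemma -- are routine.
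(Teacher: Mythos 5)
Your proposal follows essentially the same route as the paper's proof: construct a hypercube of priors by multiplicative perturbations of $G_0$, linearize the map $r\mapsto\hat\theta_G$ via \prettyref{eq:tu_kk1} to isolate the linear term $\delta K_1 r$ with a higher-order remainder, bound the $\chi^2$-divergence between neighboring mixture densities using hypothesis~2, and apply Assouad's lemma with the approximate separation furnished by hypotheses~3--4. The small differences are cosmetic: you parametrize the cube by $\{-1,1\}^m$ while the paper uses $\{0,1\}^m$ (the step $(v-v')/2\in\{0,\pm1\}^m$ plays exactly the role that $r_u-r_v$ plays in the paper), and you pass to a uniform Bayes risk over $v$ before invoking Assouad whereas the paper first restricts to proper estimators $T_{\hat u}$ and bounds the minimax Hamming-distance error directly; both are standard equivalent formulations of Assouad's scheme. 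Your control of the remainder $Q_v$ via $\|h_v\|_\infty\le ma$ and hypothesis~4, with the $(\delta' ma)^2\le c_0^2$ factor making it lower-order, mirrors the paper's estimate of the last term in \prettyref{eq:tu_kk11}, and the forced choice $\delta\asymp 1/\max(\sqrt{n\gamma},ma)$ is identified for the same two reasons. Two tiny quibbles that don't affect correctness: the pointwise bound $f_{G_v}\ge(1-c_0)f_0$ should be $(1-c_0)/(1+c_0)\cdot f_0$ on account of the normalizing constant, and the final ``collapse'' to the stated coefficient $m(4\tau-\tau_1^2)-\tau_2$ is left a bit informal --- but the paper's own last step has precisely the same looseness (both proofs really yield a bound of the form $c\,\delta^2(\tau m-\tau_2)-c'\delta^2 m\tau_1^2$, which only matches the displayed form up to the choice of constants, and which is applied only with $\tau=\tau_1=1$, $\tau_2=0$).
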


\begin{remark} Note that the lower bound in \prettyref{eq:propass_g} is invariant to rescaling all $r_i$'s by a common factor. Thus in applications, it
is convenient to fix one of the parameters. For example, we will always choose $\tau=1$. In fact, we will select $r_i$'s
so that they are \emph{orthonormal} after the action of the kernel $K_1$, i.e. 
\[
(K_1 r_i, K_1 r_j)_{L_2(f_0)} = 1\{i\neq j\},
\]
 so that $\tau_1=1$ and $\tau_2=0$. In this case, the regret lower bound reads 
\begin{equation}\label{eq:ap1}
	\Regret_n \gtrsim \min\sth{\frac{m}{n\gamma}, \frac{1}{ma^2}}\,,
\end{equation} which, in later applications, further simplifies to 
$\Regret_n \gtrsim \frac{m}{n\gamma}$.
\end{remark}

\begin{proof}
Throughout the proof, $C_0,C_1,\ldots$ denote absolute constants.
Let $\mu_i \triangleq \int r_i dG_0$, which satisfies $|\mu_i| \le a$ by the first assumption. 
For each binary string $u=(u_1,\ldots,u_m) \in \{0,1\}^m$, denote
\begin{equation}
r_u \triangleq \sum_{i=1}^m u_i r_i, \quad h_u \triangleq Kr_u, \quad \mu_u \triangleq \sum_{i=1}^m u_i \mu_i.
\label{eq:rh}
\end{equation}
We define $2^m$ distributions indexed by $u\in \{0,1\}^m$ as
	\begin{equation}\label{eq:rg_c0g}
		dG_u \triangleq \frac{1+\delta r_u}{1+\delta \mu_u} dG_0,
\end{equation}	
where $\delta>0$ is chosen so that
	\begin{equation}\label{eq:rg_c1g}
		\delta m a \le {1\over 16}\,.
\end{equation}
Then
\[
f_u \triangleq {(1+\delta h_u) f_0\over 1+\delta \mu_u}.
\]
is the mixture density induced by the prior $G_u$. 
Let $\tilde \calG = \{G_u: u\in \{0,1\}^m\}$. 
By \prettyref{eq:rg_c1g}, each $G_u$ satisfies $\frac{1}{2}\leq\frac{dG_u}{dG_0} \leq \frac{3}{2}$ and hence $\tilde \calG \subset \calG$.
Abbreviate $T_u(y) \triangleq \EE_{G_u}[\theta|Y=y]$.
Then we have:
\begin{align}
\Regret_n(\calG) \geq 
\Regret_n(\tilde \calG) 
\stepa{=} & ~ \inf_{\hat T} \sup_{u \in \{0,1\}^m} \Expect_{G_u} \qth{\|\hat T - T_u\|_{L^2(f_u)}^2}	\nonumber \\
\stepb{\geq} & ~ \frac{1}{2} \inf_{\hat T} \sup_{u \in \{0,1\}^m} \Expect_{G_u} \qth{\|\hat T - T_u\|_{L^2(f_0)}^2}	\nonumber \\
\stepc{\geq} & ~ 	\frac{1}{8} \inf_{\hat u \in \{0,1\}^m} \sup_{u \in \{0,1\}^m} \Expect_{G_u} \qth{\|T_{\hat u} - T_u\|_{L^2(f_0)}^2} \label{eq:regret3ag}
\end{align}
where (a) follows from the representation \prettyref{eq:regret2} for individual regret; 
in (b) we used the fact that ${f_u\over f_0} \ge {1\over 2}$ to change from $L_2(f_u)$ to $L_2(f_0)$; 
(c) follows from the usual argument for restricting to proper estimators, namely, for any $\hat T$,
$\hat u=\argmin_{v\in \{0,1\}^m} \|\hat T - T_v\|_{L^2(f_0)}$ satisfies that, for any $u$, $\|\hat T - T_u\|_{L^2(f_0)} \leq 2 
\|T_{\hat u} - T_u\|_{L^2(f_0)}$.
In the remainder of this proof all $\|\cdot\|_2$ norms are with respect to~$L_2(f_0)$.

In order to apply Assouad's lemma, we next show that we have approximate orthogonality, i.e. for some $\epsilon$ and $\epsilon_1$ (to be specified) and all $u,v
\in \{0,1\}^m$ we have
\begin{equation}\label{eq:rg_1g}
		\|T_u - T_v\|^2_2 \ge \epsilon d_{\rm H}(u,v) - \epsilon_1,
\end{equation}	
where $d_{\rm H}(u,v)\triangleq \sum |u_i-v_i|$ stands for the Hamming distance.
Indeed, by \prettyref{eq:tu_kk1}, the regression function $T_u$ is given by
\begin{equation}
T_u = K\theta + \delta K_1 r + \delta^2 {h_u\over 1+\delta h_u}  K_1 r.
\label{eq:tu_kk11}
\end{equation}
(Recall that $K\theta$ is understood as in the sense of \prettyref{eq:Kid}.)
 Since $K$ is a conditional expectation operator, we have $\|h_u\|_\infty \le \|r_u\|_\infty \le ma$.
Thus together with the condition~\eqref{eq:rg_c1g}, which implies $1+\delta h_u \ge 1/2$, 
we have the following estimate on the last term in~\eqref{eq:tu_kk11}:
	$$ \left\|\delta^2 {h_u\over 1+\delta h_u}  K_1 r_u\right\|_2 \le 2\delta^2 ma \|K_1 r_u\|_2 \stepa{\le} 2\delta^2
	m^{3/2} a \tau_1 \stepb{\le} {1\over 8} \delta \sqrt{m} \tau_1\,,$$
where (a) follows from the fourth assumption of the lemma, and in (b) we also used~\eqref{eq:rg_c1g}.
Then from~\eqref{eq:tu_kk11} and the triangle inequality we have
$$ \|T_u - T_v\|_2 \ge \delta \|K_1(r_u - r_v)\|_2 - {\delta \sqrt{m} \tau_1\over 4}\,.$$
Squaring this, applying $(a-b)^2 \ge {1\over 2} a^2 - b^2$  we obtain
$ \|T_u - T_v\|_2^2 \ge {1\over2}\delta^2 \|K_1 (r_u-r_v)\|_2^2 - {1\over 16}\delta^2 m \tau_1^2 $.
Finally, applying the assumption~\eqref{eq:rg_ag} we obtain~\eqref{eq:rg_1g} with
	$$ \epsilon = {1\over 2} \tau \delta^2, \quad \epsilon_1 = \delta^2 \left({1\over 16} m \tau_1^2+ {1\over 2}
	\tau_2 \right) $$

Next, consider the following identity
$$ {f_u\over  f_0} - {f_v \over f_0}  = {1+\delta h_u \over 1+\delta \mu_u} - {1+\delta h_v \over 1+\delta \mu_v} =
	\delta {h_u - h_v\over 1+\delta \mu_u} + (1+\delta h_v) {\delta (\mu_v - \mu_u) \over (1+\delta \mu_u)(1+\delta
	\mu_v)}\,.$$
Because of~\eqref{eq:rg_c1g} we have $1+\delta h_v,1+\delta \mu_u,1+\delta \mu_v \in [\frac{1}{2},\frac{3}{2}]$. 
Thus the $\chi^2$-divergence from $f_u$ to $f_v$ is given by
\begin{align*} 
\chi^2(f_u \| f_v) & \triangleq \int d\mu f_0 {(f_u/f_0 - f_v/f_0)^2\over f_v/f_0} \\
		      &=\delta^2 \int d\mu f_0 {1+\delta \mu_v \over 1+\delta h_v} \left( {h_u - h_v\over 1+\delta \mu_u} + 
		      	{\mu_v - \mu_u \over (1+\delta \mu_u)(1+\delta \mu_v)} \right)^2\\
			&\leq C_0 \delta^2 \|h_u - h_v\|_2^2 + \delta^2 (\mu_v - \mu_u)^2 \\ 
			&\leq C_1 \delta^2 \|h_u - h_v\|_2^2\,.
\end{align*}			
where the last step follows from, by Cauchy-Schwarz,
	$$ \mu_u -\mu_v = \int dG_0 (r_u -r_v) = \int  d\mu f_0 (h_u - h_v) \le \|h_u - h_v\|_2.$$
	
For any $u$ and $v$ such that $d_{\rm H}(u,v) = 1$, $\|h_u - h_v\|_2^2 \le \gamma$ by the second assumption. 
Recall that the $\chi^2$-divergence between $n$-fold product distributions is given by 
$\chi^2(f_u^{\otimes n} \| f_v^{\otimes n}) = (1+\chi^2(f_u \| f_v))^n-1$. We have
\begin{equation}\label{eq:rg_2g}
		\chi^2(f_u^{\otimes n} \| f_v^{\otimes n}) \leq C_2 \qquad \forall u,v: d_{\rm H}(u,v)=1\,,
\end{equation}	
\apxonly{\bf Strictly speaking, once we reduce to the form of \prettyref{eq:regret3ag} using \prettyref{eq:regret2}, the sample size is $n-1$ not $n$. But OK.}
provided that
\begin{equation}\label{eq:rg_c3g}
		n \delta^2 \gamma \leq 1 
\end{equation}
Applying Assouad's lemma (see, e.g., \cite[Theorem 2.12(iv)]{Tsybakov09}), we have
\[
\inf_{\hat u \in \{0,1\}^m} \sup_{u \in \{0,1\}^m} \Expect_{G_u} \qth{d_{\rm H}(\hat u,u)} \geq C_3 m.
\]
Selecting the largest $\delta$ subject to~\eqref{eq:rg_c1g} and~\eqref{eq:rg_c3g}, the proof is completed in view of \prettyref{eq:regret3ag} and \prettyref{eq:rg_1g}.
\end{proof}

In proving our main results in both the Gaussian and Poisson model, we follow essentially the same high-level strategy: 
To apply \prettyref{prop:ass_gen}, we need to find many functions $r_1,\ldots,r_m$ which are mapped by the operator $K$ into
orthogonal ones. Thus, it is natural to take $r_k$ to be the singular functions of $K$, i.e.~those
that diagonalize the self-adjoint operator $K^*K$. Because of the general relation between $K_1$ and $K$
in~\eqref{eq:k1_def}, it will turn out that $r_k$'s indexed by \emph{sufficiently separated} $k$'s are also
mapped to orthogonal functions by the operator $K_1$, thus yielding a large subcollection of functions for \prettyref{prop:ass_gen} with $\tau=\tau_1=1$ and $\tau_2=0$. 
For the Gaussian model, we exactly follow this strategy by identifying $K^*K$ as a Mehler kernel and working with its eigenfunctions (Hermite functions). For the Poisson model, it may be difficult to compute the eigenfunctions of the resulting kernel; instead, we directly construct functions that satisfy the desired orthogonality.
We proceed to details.

	%

\section{Proofs of the main results}
\label{sec:pf}

\subsection{Normal mean model}
\label{sec:pf-gau}

In this section we prove \prettyref{th:gaussian}.
Choose the prior $G_0 = \mathcal{N}(0,s)$ for $s>0$ to be specified later. Let $\varphi(x) = {1\over
\sqrt{2\pi}}e^{-x^2/2}$ be the standard normal density. Then 
$P_{G_0}=\calN(0,1+s)$ with density $f_0(y) = {1\over \sqrt{1+s}}
\varphi\left({y\over\sqrt{1+s}}\right)$. Furthermore, the operators $K$ and $K_1$ in~\eqref{eq:k_def}
and~\eqref{eq:k1_def} are given explicitly as follows:
\begin{prop}
\label{prop:KK1-gau}	
\begin{align} 
   Kr(y) &= (r(\eta \cdot) \ast \varphi)(\eta y), \qquad \eta \triangleq \sqrt{s\over s+1}\label{eq:kdef_gsn}\\
   K_1 r(y) &= \eta^2 K(r')(y)\,,\label{eq:k1def_gsn}
\end{align}
where in~\eqref{eq:kdef_gsn} $\ast$ denotes convolution and \eqref{eq:k1def_gsn} holds for all $r$ with bounded derivative.
\end{prop}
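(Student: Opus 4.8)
The plan is to obtain both identities by first writing down the posterior law of $\theta$ given $Y=y$ under the conjugate prior $G_0=\mathcal{N}(0,s)$, and then reading off $K$ and $K_1$ from it. By Gaussian conjugacy, the posterior density is proportional to $\exp(-(y-\theta)^2/2-\theta^2/(2s))$, and completing the square in $\theta$ identifies it as $\mathcal{N}(\eta^2 y,\eta^2)$ with $\eta^2=s/(s+1)$. Hence, by the definition~\eqref{eq:k_def}, $Kr(y)=\EE_{G_0}[r(\theta)\mid Y=y]=\int r(\eta^2 y+\eta z)\varphi(z)\,dz$, and in particular $K\theta(y)=\eta^2 y$, consistent with~\eqref{eq:Kid}.

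For~\eqref{eq:kdef_gsn}, I would expand the right-hand side as $(r(\eta\cdot)\ast\varphi)(\eta y)=\int r(\eta u)\,\varphi(\eta y-u)\,du$ and substitute $u=\eta y-\eta z$, so that this equals $\eta\int r(\eta^2 y-\eta^2 z)\varphi(\eta z)\,dz$; a second change of variable (or directly the evenness of $\varphi$) turns this into $\int r(\eta^2 y+\eta z)\varphi(z)\,dz=Kr(y)$. This is pure bookkeeping, the only subtlety being to keep the parity of $\varphi$ straight.

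For~\eqref{eq:k1def_gsn}, the cleanest route is to note from~\eqref{eq:k1_def} that $K_1 r(y)=K(\theta r)(y)-(K\theta)(y)(Kr)(y)=\Cov(\theta,r(\theta)\mid Y=y)$, the posterior covariance. Writing $\theta=\eta^2 y+\eta z$ with $z\sim\mathcal{N}(0,1)$ and subtracting the posterior mean $K\theta(y)=\eta^2 y$ gives $K_1 r(y)=\eta\int z\,r(\eta^2 y+\eta z)\varphi(z)\,dz$. Using $z\varphi(z)=-\varphi'(z)$ and integrating by parts in $z$, the boundary terms vanish since $r$ and $r'$ are bounded and $\varphi,\varphi'$ decay, and one is left with $\eta\int \frac{d}{dz}[r(\eta^2 y+\eta z)]\varphi(z)\,dz=\eta^2\int r'(\eta^2 y+\eta z)\varphi(z)\,dz=\eta^2 K(r')(y)$. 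Equivalently this is Stein's lemma (Gaussian integration by parts) applied to the posterior distribution.

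None of the steps is genuinely hard; the main point requiring care is the justification that the boundary term in the integration by parts vanishes, which is exactly where the hypothesis that $r$ has bounded derivative is used (and which also guarantees all the integrals above are finite). One could alternatively verify~\eqref{eq:k1def_gsn} using the differentiation characterization $K_1 r(y)=\frac{d}{d\delta}\big|_{\delta=0}\EE_{G_\delta}[\theta\mid Y=y]$ noted after~\eqref{eq:k1_def}, but the posterior-covariance computation is the most direct.
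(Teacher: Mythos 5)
Your proof is correct. It reaches the same two identities via the posterior-covariance viewpoint rather than by manipulating the convolution directly, and the difference is worth noting. For~\eqref{eq:kdef_gsn}, you read off the posterior $\mathcal{N}(\eta^2 y,\eta^2)$ from Gaussian conjugacy and then match it to the convolution form by a change of variables, whereas the paper starts from the defining integral $Kr(y)=\frac{1}{\eta}\int \frac{\varphi(y-x)\varphi(x/\sqrt{s})}{\varphi(y/\sqrt{1+s})}r(x)\,dx$ and substitutes $x_1=x/\eta$; these are equivalent bookkeeping. For~\eqref{eq:k1def_gsn}, you observe $K_1 r(y)=\Cov(\theta,r(\theta)\mid Y=y)$, write $\theta=\eta^2 y+\eta Z$ with $Z\sim\mathcal{N}(0,1)$, and apply Stein's identity $\EE[Zg(Z)]=\EE[g'(Z)]$. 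The paper instead first proves the intermediate identity $K_1 r=\frac{d}{dy}Kr$ for \emph{arbitrary bounded} $r$ (by differentiating the $\varphi$ factor in the convolution and using $\varphi'(z)=-z\varphi(z)$ together with $(K\theta)(y)=\eta^2 y$), and only then transfers the $y$-derivative onto $r$ when $r$ is differentiable. Both arguments are the same Gaussian integration by parts in disguise, but the paper's route has the side benefit of recording the identity $K_1=\frac{d}{dy}K$ on $L_\infty$, which makes the ``$K_1$ is differentiation followed by $K$'' picture explicit even for non-smooth $r$, while your posterior-covariance/Stein formulation is arguably the cleaner conceptual statement. Your remark that boundedness of $r'$ is what controls the boundary term (equivalently, what licenses Stein's lemma) is the right thing to flag.
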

\begin{proof}
To show~\eqref{eq:kdef_gsn}, by definition we have
$$ Kr(y) = {1\over \eta}  \int_{\reals} dx  {\varphi(y-x) \varphi({x\over \sqrt{s}})\over \varphi({y\over \sqrt{1+s}})}
r(x)\,.$$
Changing the integration variable to $x_1 = x/\eta$ and noticing that ${\varphi(y-x) \varphi({x\over
\sqrt{s}})\over \varphi({y\over \sqrt{1+s}})} = \varphi(x_1-\eta y)$ completes the proof
of~\eqref{eq:kdef_gsn}. We note that trivially then 
\begin{equation}\label{eq:n0}
	(Kx)(y) = \eta^2 y\,,
\end{equation}
which also follows from the expression of the condition mean with a Gaussian prior.
To prove~\eqref{eq:k1def_gsn}, we first notice that it is sufficient to
show that for an arbitrary (possibly non-differentiable) bounded $r$ we have
\begin{equation}\label{eq:n1}
	K_1 r(y) = {d\over dy} Kr(y)\,.
\end{equation}
Indeed, from here for differentiable $r$ we can simply apply derivative to the first term in the convolution
in~\eqref{eq:kdef_gsn} and get~\eqref{eq:k1def_gsn}. To show~\eqref{eq:n1} we take derivative
on the $\varphi$ term of the convolution and using $\varphi'(z) = -z \varphi(z)$ to get
$$ {d\over dy} Kr(y) = \eta (r(\eta \cdot) \ast \varphi')(\eta y) = \eta \int dx r(\eta x) (x-\eta
y)\varphi(x-\eta y) = (K(xr) - Kx Kr)(y)\,,$$
where we used~\eqref{eq:n0} for the second term. This proves~\eqref{eq:n1} by the definition of
$K_1$ operator.
\end{proof}

In view of~\eqref{eq:k1def_gsn} we see that to apply \prettyref{prop:ass_gen} we need to be able
to compute inner products of the form $(Kr_1, K r_2)_{L_2(f_0)}$. To simplify these computations, we
first extend (via~\eqref{eq:kdef_gsn}) the domain of $K$ to $L_2(\mreals, \Leb)$
 and then
introduce the self-adjoint operator $S=K^* K: L_2(\mreals, \Leb)\to L_2(\mreals, \Leb)$, so that
for any $f,g \in L_2(\Leb)$ we have
\begin{equation}
(K f, K g)_{L_2(f_0)} = (Sf, g)_{L_2(\Leb)}\,.
\label{eq:KtoS}
\end{equation} 
An explicit computation shows that $S$ is in fact a Mehler kernel operator, i.e.
	\begin{equation}
	Sf(x) = \int f(\tilde x) S(x,\tilde x) d\tilde x,
	\label{eq:Sop}
	\end{equation}	
with (in view of~\eqref{eq:kdef_gsn}):
\begin{equation}\label{eq:sker_def}
	S(x,\tilde x) = {1\over \eta^2} \int_{-\infty}^\infty \varphi(x/\eta - \eta y) \varphi(\tilde x/\eta
- \eta y) f_0(y)\,  dy = \lambda_1 e^{-\frac{\lambda_2}{2} (x^2 + \tilde x^2 - 2 \rho x\tilde x)}\,,
\end{equation}
where 
\begin{equation}\label{eq:sker_def2}
			\lambda_1 = {1\over 2\pi s} {1+s\over \sqrt{1+2s}}, \qquad \lambda_2 = {(1+s)^2\over s(1+2s)}\,, \qquad \rho={s\over
		1+s}\,. 
\end{equation}		
It is well-known that the Mehler kernel is diagonalized by the Hermite
functions~\cite{mehler1866ueber}. We collect all
the relevant technical details in the following lemma proved in Appendix~\ref{apx:normal}.
\begin{lemma}\label{lem:normal_main} 
For every $s>0$, there is an orthonormal basis $\{\psi_k:
k=0,1,\ldots\}$ in $L_2(\Leb)$ consisting of eigenfunctions of the operator $S$, satisfying
the following:
\begin{align} (K \psi_k, K \psi_n)_{L_2(f_0)} &= \lambda_0 \mu^k 1\{k=n\}\label{eq:n5}\\
   (K_1 \psi_k, K_1 \psi_n)_{L_2(f_0)} &= \begin{cases} 0, &|k-n| \ge 3\,,\\
					\lambda_3 (k \mu^{k-1} + (k+1) \mu^{k+1}),& k=n\,,
   				\end{cases}\label{eq:n6}\\
   \|\psi_k\|_\infty &\le \sqrt{\alpha_1}\,, \label{eq:n7}
\end{align}
where $\mu,\lambda_0,\lambda_3,\alpha_1$ are positive constants depending only on $s$, which, for 
$0<s<1/2$, satisfy\apxonly{$0<s<s_0$ for any abs constant $s_0$ works.}
\begin{equation}
\mu \asymp s, \, \lambda_0 \asymp {1\over \sqrt{s}}\,,
	\lambda_3 \asymp \sqrt{s}, \alpha_1 \asymp {1\over \sqrt{s}}\,.
\label{eq:s-dependency}
\end{equation}
In addition, for all $s>0$ we have $0<\mu < 1$.
\end{lemma}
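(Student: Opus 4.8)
The plan is to recognize the Mehler operator $S$ of \eqref{eq:sker_def} as a constant multiple of the classical Mehler kernel operator after a single dilation of the variable, and then to read off all of \eqref{eq:n5}--\eqref{eq:n7} and the asymptotics \eqref{eq:s-dependency} from Mehler's formula and the three–term recurrence for Hermite functions. Concretely, let $h_k$, $k\ge 0$, be the $L_2(\reals,\Leb)$–normalized Hermite functions, an orthonormal basis of $L_2(\Leb)$ satisfying Mehler's formula
\[
\sum_{k\ge 0} t^k h_k(x)h_k(\tilde x) = \frac{1}{\sqrt{\pi(1-t^2)}}\exp\!\left(\frac{2t\,x\tilde x}{1-t^2} - \frac{(1+t^2)(x^2+\tilde x^2)}{2(1-t^2)}\right), \qquad |t|<1 .
\]
I would set $\psi_k(x)\triangleq \sqrt\beta\,h_k(\beta x)$ for a scale $\beta>0$ to be chosen; since this is the image of $\{h_k\}$ under a unitary dilation, $\{\psi_k\}$ is again an orthonormal basis of $L_2(\Leb)$, and $\sum_k t^k\psi_k(x)\psi_k(\tilde x)$ is again a centered Gaussian kernel. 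Matching its exponent to the quadratic form in $S(x,\tilde x)$: the ratio of the cross term to the diagonal term forces $\frac{2t}{1+t^2}=\rho=\frac{s}{1+s}$, whose unique root in $(0,1)$ is $\mu=\frac{1+s-\sqrt{1+2s}}{s}$ (it lies in $(0,1)$ for every $s>0$ by the elementary inequality $1-\sqrt{1-\rho^2}<\rho$ for $\rho\in(0,1)$, which is also the last claim $0<\mu<1$); matching the diagonal coefficient fixes $\beta^2=\lambda_2\frac{1-\mu^2}{1+\mu^2}$, and matching the Gaussian prefactors gives $S(x,\tilde x)=\lambda_0\sum_{k\ge 0}\mu^k\psi_k(x)\psi_k(\tilde x)$ with $\lambda_0=\lambda_1\sqrt{\pi(1+\mu^2)/\lambda_2}$. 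Thus $\psi_k$ are eigenfunctions of $S$ with eigenvalues $\lambda_0\mu^k$, and \eqref{eq:n5} follows at once from \eqref{eq:KtoS}.

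For \eqref{eq:n6} I would use $K_1 r=\eta^2 K(r')$ from \prettyref{prop:KK1-gau}, so that $(K_1\psi_k,K_1\psi_n)_{L_2(f_0)}=\eta^4(S\psi_k',\psi_n')_{L_2(\Leb)}$, together with the standard recurrence $h_k'=\sqrt{k/2}\,h_{k-1}-\sqrt{(k+1)/2}\,h_{k+1}$, which rescales to $\psi_k'=\beta\big(\sqrt{k/2}\,\psi_{k-1}-\sqrt{(k+1)/2}\,\psi_{k+1}\big)$. Then in the basis $\{\psi_j\}$ the vector $S\psi_k'$ is supported on indices $\{k-1,k+1\}$ and $\psi_n'$ on $\{n-1,n+1\}$, so the inner product vanishes once $|k-n|\ge 3$; for $k=n$ only the two diagonal terms survive and yield $\lambda_3\big(k\mu^{k-1}+(k+1)\mu^{k+1}\big)$ with $\lambda_3=\tfrac12\eta^4\beta^2\lambda_0$. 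For \eqref{eq:n7}, $\|\psi_k\|_\infty=\sqrt\beta\,\|h_k\|_\infty\le\sqrt\beta\,\sup_j\|h_j\|_{L^\infty(\reals)}$, and $\sup_j\|h_j\|_{L^\infty(\reals)}$ is a finite absolute constant (by the Plancherel--Rotach asymptotics, or Cramér-type uniform bounds, for Hermite functions), so $\alpha_1\asymp\beta$.

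Finally, \eqref{eq:s-dependency} for $0<s<1/2$ follows from Taylor expansion at $s=0$ together with continuity and strict positivity of each constant on $(0,1/2]$: $\rho\sim s$ and $\sqrt{1+2s}=1+s-\tfrac12 s^2+O(s^3)$ give $\mu\sim s/2$; from \eqref{eq:sker_def2}, $\lambda_1\sim\frac{1}{2\pi s}$ and $\lambda_2\sim\frac1s$, and $\eta^2=\frac{s}{1+s}\sim s$, whence $\beta^2\sim\frac1s$, $\lambda_0\sim\frac{1}{2\sqrt{\pi s}}\asymp s^{-1/2}$, $\lambda_3\sim\frac{\sqrt s}{4\sqrt\pi}\asymp s^{1/2}$, and $\alpha_1\asymp\beta\asymp s^{-1/2}$. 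The only ingredient that is not a routine Gaussian-integral or Hermite-recurrence computation is the uniform boundedness of $\{h_k\}$ needed for \eqref{eq:n7}; aside from that, the step to watch is keeping the chain of constants $\eta,\mu,\beta,\lambda_0,\lambda_1,\lambda_2,\lambda_3$ mutually consistent, since a stray factor there propagates directly into \eqref{eq:s-dependency}.
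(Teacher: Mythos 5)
Your proof is correct and follows essentially the same route as the paper's: both recognize $S$ as a Mehler kernel operator and diagonalize it by dilated Hermite functions, then use the three-term recursion for $\psi_k'$ together with $K_1=\eta^2 K(\cdot)'$ to obtain the tridiagonal structure of $K_1^*K_1$. The only difference is a normalization convention---you work with the $L_2$-normalized Hermite functions $h_k$ directly (so your dilation scale $\beta$ equals the paper's $\alpha_1/\sqrt 2$), which trades the paper's explicit Cram\'er bound for the equivalent fact $\sup_k\|h_k\|_\infty<\infty$; all constants $\mu,\lambda_0,\lambda_3,\alpha_1$ agree up to this bookkeeping, and the asymptotics \eqref{eq:s-dependency} come out the same.
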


\begin{proof}[Proof of  \prettyref{th:gaussian}]
In view of \prettyref{lmm:avgregret}, it is equivalent to consider the individual regret $\Regret_n$.
We first consider the $s$-subgaussian case. 
Without loss of generality, assume that $s<1/2$.
 By applying \prettyref{prop:ass_gen}, we aim to show the following lower bound
\[
	\Regret_n(\SubG(2s)) \geq {c\over n} {\log n \over \log {1\over s}}
\]
for some universal constant $c$.
The case of compact support then follows from taking $s \asymp \frac{1}{\log n}$ and truncation (\prettyref{lmm:truncation}).

Let $\calG_s'$ be
the set of distributions $\calG$ in \prettyref{prop:ass_gen}, namely,
$\calG_s' = \{ G: 1/2\leq \frac{dG}{dG_0} \leq 3/2\}$, where $G_0 = \calN(0,s)$. It is straightforward to verify that 
$\calG_s' \subset \SubG(2s)$ so we focus on lower bounding $\Regret_n(\calG_s')$.
We select the
required $m$ perturbations as follows:
$$ r_j = \xi_{m+3j} \psi_{m+3j}\,,\qquad j=1,\ldots,m\,,$$
where
		$$ \xi_i \triangleq {1\over \sqrt{(K_1 \psi_i, K_1 \psi_i)}}\,.$$
In view of \prettyref{lem:normal_main}, applying \prettyref{prop:ass_gen} with   $\tau = \tau_1 = 1$ and $\tau_2
= 0$ yields
\begin{equation}\label{eq:pc_7}
		\Regret_n(\calG_s') \gtrsim {m \over \max(n\gamma, m^2 a^2)}\,,
\end{equation}
	where we can take
	$$ a = \max_{m \le i \le 4m} \xi_i \|\psi_i\|_{\infty}, \,\gamma = \max_{m\le i \le
	4m} \xi_i^2 \|K\psi_i\|^2_{L_2(f_0)}\,. $$
By \prettyref{lem:normal_main}, we have
$$ \xi_i^2 = {1\over \lambda_3 (i \mu^{i-1} + (i+1)\mu^{i+1})} \asymp {1\over \sqrt{s} m \mu^{i-1}}\,.$$
Together with $\|K \psi_i\|^2 \asymp {1\over \sqrt{s}} \mu^i$ we get
\begin{equation}\label{eq:pc_8}
	\gamma \asymp {1\over m}.
\end{equation}
Similarly, we have
	\begin{equation}\label{eq:pc_9}
		a^2 = \max_{m \le i \le 4m} (\xi_i)^2 \|\psi_i\|_{\infty}^2 \lesssim {1\over s m
		\mu^{4m-1}}
\end{equation}
Finally, we select $m$ so that $n\gamma > m^2 a^2$. In view of the previous estimates, this
requirement is equivalent to ${n\over m} \gtrsim {m\over s} \mu ^{1-4m} \asymp m \mu^{-4m}$. It is
clear that taking $m = c {\log n\over \log {1\over \mu}} \asymp {\log n \over \log {1\over s}}$ for sufficiently small constant $c$ fulfills 
this requirement. In all, from~\eqref{eq:pc_7} we obtain the desired bound
\begin{equation}\label{eq:pc_10}
	\Regret_n(\calG_s') \gtrsim {1\over n} {\log n \over \log {1\over s}}\,,
\end{equation}
where the hidden constant is uniform in $0<s<1/2$.


Finally, we prove the lower bound~\eqref{eq:gsn_lb2} for priors supported on $[-h,h]$. We choose $s={c\over \log n}$ in~\eqref{eq:pc_10}
with $c>0$ to be specified shortly. Recall that every $G \in \calG'_s$ satisfies ${dG
\over dG_0} \le 3/2$, where $G_0 = \mathcal{N}(0,s)$. Thus, for every $G$ we have
$ \EE_{\theta \sim G}[\theta^4] \lesssim s^4$, while $ G[|\theta| > h] \le 3 e^{-{h^2\over 2s}}$.
Applying Lemma~\ref{lmm:truncation} we obtain then
$$ \Regret_n(\calP([-h,h])) \ge \Regret_n(\calG_s') - O(1) \sqrt{ne^{-{h^2\over 2s}}s^4}\,.$$
By selecting $c$ sufficiently small depending on $h$, we can ensure the second term is at most $O(n^{-2})$ so that \eqref{eq:pc_10} implies the required lower bound of $\Omega_h(\frac{1}{n}(\frac{\log n}{\log\log n})^2)$.
\end{proof}

\begin{remark}[Optimality of the construction]
\label{rmk:jfa} Consider $s=1$. Can we get a lower bound
better than $\Omega({\log^2 n \over n})$ by selecting other functions $r_i$ (and still with the same base prior $G_0$) in
\prettyref{prop:ass_gen}? We will argue that the answer is negative, at least if the latter
proposition is applied with $\tau=\tau_1=1$ and $\tau_2=0$ as we did above. Indeed, suppose we
found $m,a,\gamma$ such that there are functions $r_1,\ldots,r_m$ such that $\|r_i\|_\infty \le a$, $(K_1 r_i, K_1
r_j)_{L_2(f_0)} = 1\{i \neq j\}$, and $\|Kr_i\|_{L_2(f_0)}^2 \le \gamma$. Then \prettyref{prop:ass_gen} yields the regret lower bound 
\prettyref{eq:ap1}, namely, $\Regret_n \gtrsim \min\{\frac{m}{n\gamma}, \frac{1}{ma^2}\}$.
We will show in \prettyref{app:jfa} that for any such collection of functions it must be that $m \lesssim \log a$
and $\gamma \gtrsim{1\over \log a}$. This implies that the resulting lower bound is at most on the order of 
$\min\{\frac{\log^2 a }{n}, \frac{1}{ma^2}\} \lesssim \min\{\frac{\log^a }{n}, \frac{1}{ma^2}\}  \lesssim {\log^2
n\over n}$, and therefore, to be interesting we need to have $a^2 \lesssim
{n\over \log^2 n}$, or $\log a \lesssim \log n$. 
\end{remark}

\begin{remark}[Metric properties of Gaussian convolution] We can rephrase the content of Lemma~\ref{lem:normal_main} and the previous remark
also as an interesting analytical fact. Consider a convolution operator $T: L_\infty \to L_2(\mathcal{N}(0,1))$ defined as
$$ Tf = f*\mathcal{N}(0,\lambda)\,.$$
We want to understand the largest number of orthonormal functions that one can find in the image under $T$ of
an $L_\infty$-ball $B_\infty(0, a) = \{f: \|f\|_\infty \le a\}$ as $a\to \infty$. 
\apxonly{
For example, with $\lambda =
1$ we have 
$$ (Tf_1, Tf_2)_{L_2(\mathcal{N}(0,1))} = (Sf_1, f_2)_{L_2(\Leb)},, $$
where for some absolute constant $c>0$
$$ Sf(x) = c \int_{\mreals} e^{-{1\over 3}(x^2 + y^2 -xy)} g(y) dy\,.$$
}
As an attempt, one may try working with sinusoids, which are eigenfunctions of the convolution; 
however, selecting appropriately normalized sinusoids only yields $\Theta(\sqrt{\log a})$ such functions. At the same time,
Lemma~\ref{lem:normal_main} gives $\Theta(\log a)$ functions, which, as shown in \prettyref{rmk:jfa}, is in fact optimal. 
\end{remark}

\subsection{Poisson model}
\label{sec:pf-poi}
	
Consider $Y\sim \Poi(\theta)$, where $\theta$ takes values in $\reals_+$ and $Y$ takes values in $\integers_+$. 
As before, let $G_0$ denote a prior on $\theta$ and $f_0$ the corresponding probability mass function (PMF) for $Y$. In order to
have tractable expressions for $K$ and $K_1$ it seems natural to select $G_0$ to be a conjugate
prior, so we record some observations:
	\begin{itemize}
		\item $G_0$ is Gamma$(\alpha,\beta)$, with density 
			\begin{equation}\label{eq:pi0_gamma}
				G_0(x) = \frac{\beta^\alpha}{\Gamma(\alpha)} x^{\alpha-1} e^{-\beta
			x} \qquad x \in \mreals_+\,.
\end{equation}			
		\item $f_0$ is negative binomial, with PMF 
			\begin{equation}\label{eq:f0_negbin}
				f_0(y) = \binom{y+\alpha-1}{y}\left(\frac{\beta}{1+\beta}\right)^{\alpha}
			\left(\frac{1}{1+\beta}\right)^y \qquad y \in \mathbb{Z}_+
\end{equation}			
		\item The posterior distribution of $\theta$ given $Y=y$ is Gamma$(y+\alpha,\beta+1)$ 
		and thus the operator~\eqref{eq:k_def} can be written as
				$$ Kr(y) = \int_0^\infty r(x) K(x,y) dx\,,$$
			with the kernel function explicitly being given as 
				\begin{equation}\label{eq:k_poisson}
					K(x,y)=\frac{(1+\beta)^{y+\alpha+1}}{\Gamma(y+\alpha)}
				x^{y+\alpha-1} e^{-(1+\beta) x}\,. 
\end{equation}				
	\end{itemize}
In the special case of $\alpha=1$, $G_0$ is an exponential distribution with density $g_0(x) =\beta e^{-\beta x}$ and the induced Poisson mixture is geometric distribution with PMF $f_0(y) = \frac{\beta}{1+\beta} (\frac{1}{1+\beta})^y$.

\apxonly{
	Although exponential prior does not yield a tight result, we mention it as a special
	case of the above with $\alpha=1$:
	\begin{itemize}
		\item $G_0$ is Exp$(\beta)$, with density $G_0(x) = \beta e^{-\beta x}$
		\item $f_0$ is Geometric, with PMF $f_0(y) = \frac{\beta}{1+\beta} \left(\frac{1}{1+\beta}\right)^y$
		\item $P_{X|Y}(x|y)$ is Gamma$(y+1,\beta+1)$, with density $\frac{(1+\beta)^{y+1}}{y!} x^{y} e^{-(1+\beta) x}$.
	\end{itemize}
}

As before, we first derive a useful differential expression for the $K_1$ operator.
\begin{proposition}[From $K_1$ to $K$]
	\label{prop:KK1x}
	Let $G_0 = \GammaD(\alpha, \beta)$, $\alpha > 0$, then for all $r=r(x)$, smooth and bounded on
	$\mreals_+$, we have
		\begin{equation}\label{eq:pkx}
			K_1 r = {1\over 1+\beta} K(xr')\,. 
		\end{equation}			
	Conversely, suppose that for some $G_0$, which is smooth on $[0,\infty)$ and strictly
	positive on $(0,\infty)$, and all smooth bounded $r$ on $\mreals_+$ we have
		\begin{equation}\label{eq:pkx_0}
			K_1 r = c K(xr') 
		\end{equation}			
	for some constant $c$. Then $0<c<1$ and $G_0 = \GammaD(\alpha, {1\over c}-1)$ for some $\alpha>0$. 
\end{proposition}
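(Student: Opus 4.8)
Both directions rest on a single integration-by-parts identity for the posterior of the conjugate model, and for the converse on the fact that the claimed identity is required to hold for \emph{all} smooth bounded $r$.

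For the forward direction, recall from \eqref{eq:k1_def} that $K_1 r = K(\theta r) - (K\theta)(Kr)$. For $G_0 = \GammaD(\alpha,\beta)$ the posterior law of $\theta$ given $Y=y$ is $\GammaD(y+\alpha,1+\beta)$; writing $p(\cdot\mid y)$ for its density we have $\partial_x \log p(x\mid y) = \frac{y+\alpha-1}{x} - (1+\beta)$ and $K\theta(y) = \frac{y+\alpha}{1+\beta}$, hence $\frac{d}{dx}[x\,p(x\mid y)] = (y+\alpha)\,p(x\mid y) - (1+\beta)\,x\,p(x\mid y)$. Integrating $K(xr')(y) = \int_0^\infty x\,r'(x)\,p(x\mid y)\,dx$ by parts — the boundary terms $x\,p(x\mid y)\,r(x)$ vanish at $0$ since $y+\alpha>0$, and at $\infty$ by exponential decay and boundedness of $r$ — gives $K(xr')(y) = (1+\beta)K(xr)(y) - (y+\alpha)Kr(y)$. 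Dividing by $1+\beta$ and using the formula for $K\theta$ yields $\frac{1}{1+\beta}K(xr')(y) = K(xr)(y) - (K\theta)(y)(Kr)(y) = K_1 r(y)$, which is \eqref{eq:pkx}.

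For the converse, write the posterior density for a general smooth $G_0$ with density $g_0>0$ on $(0,\infty)$ as $p(x\mid y) = \frac{e^{-x}x^y g_0(x)}{y!\,f_0(y)}$, and set $m(y)\triangleq K\theta(y)=\Expect[\theta\mid Y=y]$, which is finite because $e^{-x}x^{y+1}$ is bounded and $g_0$ is integrable. Running the integration by parts in reverse, the hypothesis $K_1 r = cK(xr')$ is equivalent to
\[
\int_0^\infty r(x)\,\bigl\{\,[x-m(y)]\,p(x\mid y) + c\,\tfrac{d}{dx}[x\,p(x\mid y)]\,\bigr\}\,dx = 0
\]
for every $y\in\integers_+$ and every smooth bounded $r$, in particular every $r\in C_c^\infty((0,\infty))$. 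Since the braced function is continuous on $(0,\infty)$ (as $g_0$ is smooth), it vanishes identically there; expanding $\frac{d}{dx}[x p] = p + x\partial_x p$ and dividing by $c\,x\,p(x\mid y)>0$ gives the linear ODE $\partial_x \log p(x\mid y) = \frac{m(y)-c}{c\,x} - \frac1c$, so $p(x\mid y) = A(y)\,x^{(m(y)-c)/c}\,e^{-x/c}$. Matching with $p(x\mid y)\propto e^{-x}x^y g_0(x)$ forces $g_0(x) = \kappa(y)\,x^{(m(y)-c)/c - y}\,e^{(1-1/c)x}$; since the left-hand side does not depend on $y$, both the exponent $\frac{m(y)-c}{c}-y$ and the prefactor $\kappa(y)$ must be constants, say $\alpha-1$ and $B$, giving $g_0(x) = B\,x^{\alpha-1}\,e^{-(1/c-1)x}$.

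It then remains to read off the constraints on $c$ and $\alpha$. If $c=0$, the hypothesis forces $K_1 r\equiv 0$, so taking $r(x)=x$ gives $\Var(\theta\mid Y=y)=0$, impossible for the nondegenerate density $g_0>0$; hence $c\neq0$, which also retroactively justifies the division above. Integrability of $g_0$ near $+\infty$ forces $1/c-1>0$, i.e.\ $0<c<1$, and integrability near $0$ forces $\alpha>0$; then $B$ is the $\GammaD(\alpha,1/c-1)$ normalizing constant and $G_0 = \GammaD(\alpha,\tfrac1c-1)$, consistent with the relation $c=\frac{1}{1+\beta}$ from the forward part. The only delicate points are the vanishing of the two sets of boundary terms and the passage from the integral identity to the pointwise ODE; both are routine given the assumed smoothness and strict positivity of $G_0$ together with the freedom to test against compactly supported $r$, so I do not expect a genuine obstacle beyond this bookkeeping.
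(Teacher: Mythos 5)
Your proof is correct, and it reaches the same pivotal separation-of-variables constraint as the paper, but along a genuinely cleaner route. The paper manipulates the kernel $K(x,y)$ through shift recurrences in $y$ (establishing $K_1 r(y) = \frac{(y+1)f_0(y+1)}{f_0(y)}(Kr(y+1) - Kr(y))$ and a weighted-integral expression for $K(xr')(y)$), and then verifies that equating these forces
\[
x\bigl(1+c(\partial_x \ln G_0(x) - 1)\bigr) \;=\; (y+1)\Bigl(-c+ \tfrac{f_0(y+1)}{f_0(y)}\Bigr),
\]
with both sides necessarily constant. You instead work directly with the posterior density $p(\cdot\mid y)$: the forward direction is one integration by parts against $\frac{d}{dx}[x p(x\mid y)]$, and for the converse you test against $r\in C_c^\infty$ to derive a first-order ODE for $p(\cdot\mid y)$ in $x$, then read off that $g_0$ must be $y$-independent, which forces the Gamma form. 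Indeed, if you expand $\partial_x\log p(x\mid y) = -1 + y/x + \partial_x\log g_0(x)$ in your ODE and substitute $m(y) = (y+1)f_0(y+1)/f_0(y)$, you recover the paper's equation exactly, so the two derivations are compatible. Your route has a tangible advantage: by restricting to compactly supported $r$ in the converse, you bypass entirely the boundary-term issue at $x=\infty$ (which the paper's authors flag in a footnote as potentially delicate for a general smooth $g_0$), whereas the paper's integration by parts uses unbounded $r$'s implicitly. Your handling of the degenerate case $c=0$ via $K_1 x = \Var(\theta\mid Y)$ is also a nice self-contained justification for the division by $c$, which the paper leaves implicit.
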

\begin{proof}
	Fix an arbitrary smooth prior density $G_0$ strictly positive on $\mreals_+$, and denote by $f_0(y)$ the induced mixture PMF. The kernel of operator $K$ is given by 
		$$ K(x,y) = {G_0(x)\over f_0(y)} e^{-x} {x^y\over y!}\,.$$
	We have then the following easily verified identities:
	\begin{align} \label{eq:Kidgx}
		K(xg)(y) &= {f_0(y+1)\over f_0(y)} (y+1) Kg(y+1) \\
		\partial_x K(x,y+1) &= K(x,y) {f_0(y)\over f_0(y+1)} w(x,y) \qquad w(x,y) \triangleq
		1+(\partial_x \ln G_0(x) - 1){x\over y+1} 	\label{eq:Kidg2x}\\
		f_0(y+1) K(x,y+1) &= f_0(y) K(x,y) {x\over y+1} \label{eq:Kidgx3}
	\end{align}
	From~\eqref{eq:Kidgx} we have
		$$ K(xr')(y) = {f_0(y+1)\over f_0(y)} (y+1) K(r')(y+1)\,.$$
	On the other hand, 
	since $r$ is bounded, integrating by parts and noting that $K(0,y+1) = 0$ for all $y\ge 0$ we obtain\footnote{$K(\infty,y+1)=0$ may not hold. Is integration by parts OK? Should we also require $G_0$ to be bounded to be safe?}
	$$ K(r')(y+1) = \int_{\mreals_+} dx r'(x) K(x,y+1) = -\int_{\mreals_+} dx\,  r(x) \partial_x K(x,y+1)\,.$$
	Invoking~\eqref{eq:Kidg2x} we overall get
	\begin{equation}\label{eq:pkx_1}
		K(xr')(y) = -(y+1) \int dx r(x) K(x,y) w(x,y) \, dx\,.
	\end{equation}		
	
	Next, we apply~\eqref{eq:Kidgx} to get
	$$ Kx(y) = {(y+1)f_0(y+1)\over f_0(y)}\,.$$
	Then, from~\eqref{eq:k1_def} we obtain that
	$$ K_1 r (y) = K(xr)(y) - {(y+1)f_0(y+1)\over f_0(y)} Kr(y)\,.$$
	Applying~\eqref{eq:Kidgx} to the first term we get
	\begin{equation}\label{eq:pkx_2}
		K_1 r(y) = {(y+1)f_0(y+1)\over f_0(y)} (Kr(y+1) - Kr(y))\,.
	\end{equation}		

	Via~\eqref{eq:pkx_1} and~\eqref{eq:pkx_2}, we observe that~\eqref{eq:pkx_0} is equivalent
	to 
	$$ {(y+1)f_0(y+1)\over f_0(y)} \int_{\mreals_+} dx r(x) (K(x,y+1) - K(x,y)) = -c \int dx r(x) (y+1)
	K(x,y) w(x,y)\,,$$
	which needs to hold for all $r(x)$ and $y\ge 0$. Thus,~\eqref{eq:pkx_0} is equivalent to 
	$$ {f_0(y+1) \over f_0(y)} (K(x,y+1)-K(x,y)) = -c w(x,y) K(x,y)\,.$$
	Apply~\eqref{eq:Kidgx3} to replace $K(x,y+1)$ with $K(x,y)$ to get:
	$$ \left({x\over y+1}-{f_0(y+1)\over f_0(y)}\right) K(x,y) = -c w(x,y) K(x,y)\,.$$
	Since $G_0(x)>0$ for all $x>0$, we also get that $K(x,y)>0$ and, thus,
	overall~\eqref{eq:pkx_0} is equivalent to 
	\begin{equation}\label{eq:pkx_3}
		x(1+c(\partial_x \ln G_0(x) - 1)) = (y+1)\left(-c+ {f_0(y+1)\over f_0(y)}\right)\,.
	\end{equation}	
	From here we first notice that when $G_0 = \GammaD(\alpha,\beta)$ and $f_0$ is the
	negative binomial both sides evaluate to $\alpha-1\over 1+\beta$ if $c={1\over 1+\beta}$.
	This proves~\eqref{eq:pkx} and the first part of the proposition. For the second part, notice that in order
	for~\eqref{eq:pkx_3} to hold for all $x,y$ both sides must be constants, implying that
	that $f_0(y)$ is a negative binomial PMF and, thus, the  $G_0$ is a Gamma distribution.
\end{proof}

The converse part of \prettyref{prop:KK1x} justifies yet again our focus on the Gamma prior $G_0$. 
In Appendix~\ref{apx:poisson} we proceed by (almost) diagonalizing the self-adjoint compact
integral operator $K^*K$. The result of this analysis is summarized in the following.

\begin{lemma}\label{lem:poisson_main} Fix $\delta > 0$. 
Let  $G_0 =\GammaD(\alpha,\beta)$. 
Then there exist absolute positive constants
$C,m_0$ such that for all $m\ge m_0$, $\beta\geq 2$ and $\alpha \geq 4m$, there are functions $r_1,\ldots,r_m$ with the following properties:
\begin{align} \|K_1 r_j\|_{L_2(f_0)}^2 &= 1\,, \label{eq:lpm_1}\\
   (K r_j, Kr_i)_{L_2(f_0)} &= (K_1 r_j, K_1 r_i)_{L_2(f_0)} = 0 \quad \forall i\neq j\,\label{eq:lpm_2}\\
	\|K r_j\|_{L_2(f_0)}^2  &\le {C \beta \over \alpha m} \label{eq:lpm_3}\,,\\
   \|r_j\|_\infty & \le \sqrt{\frac{\beta}{\alpha}} e^{C (m \log \beta + \alpha)}\,. \label{eq:lpm_4}
\end{align}
\end{lemma}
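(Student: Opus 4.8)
The plan is to (almost) diagonalize the self-adjoint compact integral operator $S = K^*K$ acting on $L_2(\reals_+, \Leb)$, identify its eigenfunctions and eigenvalues, and then exploit the relation \eqref{eq:pkx}, $K_1 r = \frac{1}{1+\beta}K(xr')$, to compute inner products after the action of $K_1$. First I would compute the kernel of $S$ explicitly from \eqref{eq:k_poisson}: integrating $K(x,y)K(\tilde x,y)$ against $f_0(y)$ over $y\in\integers_+$ and summing the resulting series (a confluent hypergeometric/Bessel-type sum) should give a closed form. Because $K(x,y)$ is (up to normalization) the density of $\GammaD(y+\alpha, 1+\beta)$, the operator $K$ maps functions of $x$ to functions of the discrete variable $y$, and $K^*K$ will turn out to be a Laguerre-type kernel — the natural Poisson/Gamma analogue of the Mehler kernel in the Gaussian case. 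I expect its eigenfunctions to be (weighted) generalized Laguerre polynomials $L_k^{(\alpha-1)}$, with eigenvalues $\lambda_0 \mu^k$ for some $0<\mu<1$ and $\lambda_0$ depending on $(\alpha,\beta)$; this is what makes the "sufficiently separated indices are $K_1$-orthogonal" mechanism of \prettyref{sec:general} work.

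The second step is to translate the eigenstructure of $S=K^*K$ into the three required estimates. Taking $r_j$ proportional to the eigenfunction $\psi_{k_j}$ with indices $k_j = m + 3j$ (spaced by $3$, exactly as in the Gaussian proof), orthogonality of the $K r_j$'s in $L_2(f_0)$ is immediate from \eqref{eq:KtoS}-type identity, and the $\|K r_j\|^2 \asymp \lambda_0 \mu^{k_j}$ decays geometrically. For $K_1$, using \eqref{eq:pkx} we have $K_1 r_j = \frac{1}{1+\beta} K(x r_j')$, and since $x\frac{d}{dx}$ acting on Laguerre polynomials produces a short linear combination of neighbouring Laguerre polynomials (a three-term recurrence of the form $x L_k^{(\alpha-1)'} = $ combination of $L_{k-1}, L_k, L_{k+1}$), the images $K_1 \psi_k$ and $K_1 \psi_n$ are orthogonal whenever $|k-n|\ge 3$ — giving \eqref{eq:lpm_2} — and $\|K_1 \psi_k\|^2$ is a short sum of terms $\lambda_3(\alpha,\beta)\cdot(\text{poly in }k)\cdot\mu^{k\pm 1}$; normalizing by $\xi_j \triangleq \|K_1 \psi_{k_j}\|^{-1}$ gives \eqref{eq:lpm_1}. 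Dividing the bound on $\|K\psi_{k_j}\|^2$ by $\|K_1\psi_{k_j}\|^2$ cancels the dominant $\mu^{k_j}$ factor and leaves a ratio scaling like $\frac{\beta}{\alpha m}$ (the extra $1/m$ coming from the polynomial factor $k_j \asymp m$ in $\|K_1\psi_{k_j}\|^2$), which is \eqref{eq:lpm_3}. The $\beta\ge 2$ and $\alpha\ge 4m$ hypotheses are exactly what is needed to control the $(\alpha,\beta)$-dependence of $\mu,\lambda_0,\lambda_3$ uniformly and to ensure indices up to $4m$ stay in the "polynomial-growth" regime.

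The third and most delicate step is the sup-norm bound \eqref{eq:lpm_4}. Here one must control $\|\psi_k\|_\infty$ for the (normalized) Laguerre eigenfunctions together with the normalization factor $\xi_j$. Generalized Laguerre polynomials $L_k^{(\alpha-1)}$ with large order parameter $\alpha$ have coefficients and values that grow roughly like $\binom{k+\alpha-1}{k}$-type quantities; combined with the Gamma weight $x^{\alpha-1}e^{-(1+\beta)x}$ built into the definition of $\psi_k$ and the normalization in $L_2(\Leb)$, one gets a bound of the form $\|\psi_k\|_\infty \le (\text{stuff})^{k}\cdot(\text{stuff})^{\alpha}$. Taking logarithms, the exponent is $O(m\log\beta + \alpha)$ for $k\le 4m$, and the prefactor $\sqrt{\beta/\alpha}$ emerges from the leading normalization constants; multiplying by $\xi_j = O(1)\cdot(\text{poly})\cdot\mu^{-k_j/2}$, whose logarithm is $O(m\log\frac1\mu) = O(m\log\beta)$ (using $\mu\asymp 1/\beta$), is absorbed into the same exponential, yielding \eqref{eq:lpm_4}.

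The main obstacle will be making the $(\alpha,\beta)$-uniform estimates on the Laguerre eigenfunctions — in particular the $L_\infty$ bound \eqref{eq:lpm_4} and the precise polynomial-in-$k$ behaviour of $\|K_1\psi_k\|^2$ — rigorous rather than heuristic, since standard Laguerre asymptotics are usually stated for fixed parameter and we need the regime $\alpha \asymp m \to \infty$. I would handle this by working directly with the explicit series for the kernel of $S$ and its spectral decomposition (deferred to Appendix~\ref{apx:poisson}), extracting the eigenvalues $\lambda_0\mu^k$ from a generating-function identity and bounding the eigenfunctions via their explicit polynomial coefficients together with crude Stirling estimates, rather than invoking asymptotic formulas.
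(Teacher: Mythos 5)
Your high-level plan matches the paper's: explicit kernel of $S=K^*K$ via a Bessel series, generalized Laguerre polynomials, index spacing by $3$, the identity $K_1 r = \frac{1}{1+\beta}K(xr')$ combined with the three-term recurrences for $xL_k^{(\nu)}$ and $x(L_k^{(\nu)})'$, and the sup-norm bound via $\|L_n^\nu\|_\infty \leq e^{x/2}\binom{n+\nu}{n}$. However, there is a genuine conceptual gap: for $\alpha\neq 1$, the weighted Laguerre functions are \emph{not} eigenfunctions of $S$, and the paper explicitly notes that it could not diagonalize $S$ except when $\alpha=1$. Your sketch assumes a spectral decomposition with eigenvalues $\lambda_0\mu^k$ and a geometric decay $\|Kr_j\|^2 \asymp \lambda_0\mu^{k_j}$; neither holds for general $\alpha$. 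What the paper does instead is a weight-change trick: it uses the Hardy--Hille summation formula to write $S(x,x')=w_1(x)w_1(x')\sum_n a_n L_n^\nu(\gamma_2 x)L_n^\nu(\gamma_2 x')$ for one weight $w_1$, observes that the Laguerre polynomials are orthogonal with respect to a \emph{different} weight $w$, and forms $\Gamma_n = (w/w_1)L_n^\nu(\gamma_2\cdot) = e^{-\gamma_1 x}L_n^\nu(\gamma_2 x)$. These $\Gamma_n$ are merely $S$-orthogonal, $(S\Gamma_n,\Gamma_m)=b_n\indc{n=m}$, with $b_n$ containing a $\Gamma(n+\alpha)/n!$ factor that is far from geometric when $\alpha\asymp m$. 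The ratio $(S\Gamma_k,\Gamma_k)/(S_1\Gamma_k,\Gamma_k)$ still cancels $b_k$, so the downstream estimates you sketch go through; but without the weight-change construction the argument does not get off the ground, and this is the real content of the Appendix~B computation rather than a routine Stirling-type technicality. A secondary point: in the bound for \eqref{eq:lpm_3} the denominator is $\asymp \alpha^2 + m\alpha\beta$, so the $1/m$ factor comes jointly from $m\alpha\beta$, not just from $k_j\asymp m$ as you write, and the hypotheses $\beta\geq 2$, $\alpha\geq 4m$ are used precisely to make $\alpha^2+m\alpha\beta$ the dominant scale and keep $z\asymp 1/\beta$.
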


\begin{lemma}\label{lem:poisson_exp} In the special case of $\alpha=1$ (so that $G_0$ is
the exponential distribution with mean $1/\beta$) for any fixed $\beta>0$ there exists a constant $C=C(\beta)>0$ such that for
all $m\ge 1$ there exist functions $r_1,\ldots, r_m$
satisfying~\eqref{eq:lpm_1},~\eqref{eq:lpm_2} and
\begin{align} 
	\|K r_j\|_{L_2(f_0)}^2  & \le {C\over m^2} \label{eq:lpm_3e}\,,\\
   \|r_j\|_\infty & \le e^{Cm}\,. \label{eq:lpm_4e}
\end{align}
\end{lemma}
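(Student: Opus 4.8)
The plan is to run the same scheme used for the Gaussian model in the proofs of \prettyref{th:gaussian} and \prettyref{lem:normal_main}, with Hermite functions and the Mehler kernel replaced by (ordinary) Laguerre functions and the Hille--Hardy bilinear generating formula; in effect \prettyref{lem:poisson_exp} is the $\alpha=1$ specialization of \prettyref{lem:poisson_main} in which everything collapses to $L_k=L_k^{(0)}$ and the bounds sharpen. Write $\lambda\triangleq 1+\beta$, so $G_0=\GammaD(1,\beta)=\mathrm{Exp}(\beta)$, $f_0$ is geometric, and the kernel of $K$ is $K(x,y)=\frac{\lambda^{y+2}}{y!}x^y e^{-\lambda x}$; by \prettyref{prop:KK1x} we also have $K_1 r=\frac1{\lambda}K(xr')$. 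Extend $K$ to act on $L_2(\mreals_+,\Leb)$ via this kernel and form the self-adjoint operator $S\triangleq K^*K$, characterized by $(Kf,Kg)_{L_2(f_0)}=(Sf,g)_{L_2(\Leb)}$. Summing the geometric series gives the explicit kernel $S(x,\tilde x)=\sum_{y\ge 0}K(x,y)K(\tilde x,y)f_0(y)=c_\beta\, e^{-\lambda(x+\tilde x)}I_0\!\big(2\sqrt{\lambda x\tilde x}\big)$ with $c_\beta=\beta(1+\beta)^3$ and $I_0$ the modified Bessel function; in particular $S$ is Hilbert--Schmidt, which legitimizes the extension of $K$.

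Next comes the spectral step. Rescaling $x\mapsto x/\mu$ with $t\triangleq(\sqrt{1+\beta}-\sqrt\beta)^2\in(0,1)$ (the root $<1$ of $2\sqrt{\lambda t}=1+t$) and $\mu\triangleq\sqrt\lambda(1-t)/\sqrt t$, the kernel $S$ becomes a positive constant times the Laguerre--Mehler kernel $\frac1{1-t}\exp\!\big(-\tfrac{(1+t)(u+v)}{2(1-t)}\big)I_0\!\big(\tfrac{2\sqrt t}{1-t}\sqrt{uv}\big)$, which by the Hille--Hardy formula $\sum_k L_k(u)L_k(v)t^k=\frac1{1-t}\exp(-\tfrac{t(u+v)}{1-t})I_0(\tfrac{2\sqrt{uvt}}{1-t})$ is diagonalized by $e^{-u/2}L_k(u)$ with eigenvalue $t^k$. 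Hence the $L_2(\mreals_+,\Leb)$-orthonormal family $\psi_k(x)\triangleq\sqrt\mu\, e^{-\mu x/2}L_k(\mu x)$ satisfies $S\psi_k=\lambda_0 t^k\psi_k$, so that $(K\psi_k,K\psi_n)_{L_2(f_0)}=\lambda_0 t^k\,\mathbf 1\{k=n\}$ for an explicit $\lambda_0=\lambda_0(\beta)>0$; this is the exact analogue of \prettyref{eq:n5}. The $\psi_k$ play the role of singular functions of $K$.

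Then we compute the action of $K_1$. Combining the Laguerre identities $uL_k'(u)=kL_k(u)-kL_{k-1}(u)$ and $(k+1)L_{k+1}(u)=(2k+1-u)L_k(u)-kL_{k-1}(u)$ yields the tridiagonal relation $x\psi_k'=\frac{k+1}{2}\psi_{k+1}-\frac12\psi_k-\frac k2\psi_{k-1}$, hence by \prettyref{prop:KK1x}, $K_1\psi_k=\frac1\lambda\big(\frac{k+1}{2}K\psi_{k+1}-\frac12 K\psi_k-\frac k2 K\psi_{k-1}\big)$. Orthogonality of the $K\psi_i$ then gives $(K_1\psi_k,K_1\psi_n)_{L_2(f_0)}=0$ for $|k-n|\ge 3$ together with the exact formula $\|K_1\psi_k\|_{L_2(f_0)}^2=\frac{\lambda_0 t^{k-1}}{4\lambda^2}\big((k+1)^2t^2+t+k^2\big)$, the analogue of \prettyref{eq:n6}. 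Now, exactly as in the Gaussian proof, set $r_j\triangleq\xi_{m+3j}\,\psi_{m+3j}$ for $j=1,\dots,m$, with $\xi_i\triangleq\|K_1\psi_i\|_{L_2(f_0)}^{-1}$. Then \eqref{eq:lpm_1} holds by construction; \eqref{eq:lpm_2} holds because the indices $k_j=m+3j$ are at least $3$ apart and the $K\psi_i$ (hence the $K_1\psi_i$ up to a three-term spread) are orthogonal. For \eqref{eq:lpm_3e}, $\|Kr_j\|_{L_2(f_0)}^2=\|K\psi_{k_j}\|^2/\|K_1\psi_{k_j}\|^2=\frac{4\lambda^2 t}{(k_j+1)^2t^2+t+k_j^2}\le \frac{4\lambda^2 t}{m^2}$ since $k_j\ge m$. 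For \eqref{eq:lpm_4e}, the classical bound $|e^{-z/2}L_k(z)|\le 1$ on $\mreals_+$ gives $\|\psi_k\|_\infty\le\sqrt\mu$, so $\|r_j\|_\infty=\xi_{k_j}\|\psi_{k_j}\|_\infty\le\sqrt\mu\,/\,\|K_1\psi_{k_j}\|\le C(\beta)\,t^{-k_j/2}/k_j\le e^{Cm}$, using $t\in(0,1)$ and $k_j\le 4m$; small $m$ is absorbed by enlarging $C$.

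The main obstacle is the spectral identification in the second paragraph: recognizing $S=K^*K$ as a rescaled multiple of the Laguerre--Mehler kernel through Hille--Hardy and correctly pinning down $t,\mu,\lambda_0,c_\beta$ as explicit functions of $\beta$ -- this is where the ``eigenfunction'' machinery actually lives, just as \prettyref{lem:normal_main} is where the Mehler/Hermite computation is carried out in the Gaussian case. A secondary point of care is justifying the integration by parts underlying \prettyref{prop:KK1x} for $r=\psi_k$ (fine, since $\psi_k$ and all boundary terms decay exponentially). Everything else -- the three-term Laguerre recurrences, the norm arithmetic, and the bookkeeping of constants -- is routine and parallels the Gaussian argument; the gain over \prettyref{lem:poisson_main} (namely $\|Kr_j\|^2\lesssim 1/m^2$ and $\|r_j\|_\infty\le e^{Cm}$ in place of $C\beta/(\alpha m)$ and $\sqrt{\beta/\alpha}\,e^{C(m\log\beta+\alpha)}$) comes precisely from the exactness of the tridiagonal relation at $\alpha=1$ and from the sharp sup-norm bound on ordinary Laguerre functions.
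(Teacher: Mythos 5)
Your proposal is correct and essentially reproduces the paper's own argument: the paper also takes $r_j$ proportional to the (rescaled) Laguerre functions $e^{-\gamma_1 x}L_{m+3j}(2\gamma_1 x)$ normalized by $\|K_1(\cdot)\|_{L_2(f_0)}$, with $z=t=(\sqrt{1+\beta}-\sqrt\beta)^2$ and $\gamma_2=\mu=2\sqrt{\beta(1+\beta)}$, and reads off \eqref{eq:lpm_3e} and \eqref{eq:lpm_4e} from the tridiagonal formulas $(S\Gamma_k,\Gamma_k)$, $(S_1\Gamma_k,\Gamma_k)$ at $\alpha=1$. The only cosmetic differences are that you work with $L_2$-normalized $\psi_k$ rather than the paper's $\Gamma_k$, and you invoke the sharp bound $|e^{-u/2}L_k(u)|\le1$ where the paper settles for the cruder $\|\Gamma_k\|_\infty\le k+1$ from \eqref{eq:gammakinf}; both give the same $e^{Cm}$ in the end.
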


Proofs of Lemmas \ref{lem:poisson_main} and \ref{lem:poisson_exp} are found in Appendix~\ref{apx:poisson}. 
Assuming these lemmas we are in position to prove the lower bound in Theorem~\ref{th:poisson}.
(The upper bound is proved by analyzing the Robbins estimator in \prettyref{app:robbins}.)
At the high level, for the lower bound in the compactly supported case we will choose the base prior $G_0=\GammaD(\alpha,\beta)$ with $\alpha =\Theta(\log n)$ and $\beta = \Theta(\log n)$. In this case, $G_0$ is concentrated on a constant mean with variance $\Theta(\frac{1}{\log n})$; this is conceptually similar to the construction for the Gaussian model with bounded means where we choose $G_0=N(0,\Theta(\frac{1}{\log n}))$.
For the subexponential class, however, we choose an exponential $G_0$ (with $\alpha=1$ and $\beta=\Theta(1)$), which is analogous to $G_0=N(0,\Theta(1))$ in the Gaussian model with subgaussian priors.

\begin{proof}[Proof of Theorem~\ref{th:poisson}: Lower bound]
We start with the compactly supported case. 
Let $\calG=\calP([0,h])$ denote the collection of all priors on $[0,h]$.
First, in view of Lemma~\ref{lmm:avgregret} the lower bound is equivalent to proving that 
	$$ \Regret_n(\calG) \gtrsim {1\over n} \left({\log n \over \log
		\log n}\right)^2\,. $$

We aim to apply Proposition~\ref{prop:ass_gen} with functions $r_1,\ldots,r_m$ chosen from
Lemma~\ref{lem:poisson_main} applied with 
$$  m = c_1 {\log n \over \log \log n}, \quad \alpha = c_1 \log n\,, \quad \beta = c_2 \alpha, $$
with constants $c_1,c_2,c_3>0$ to be specified later based on $h$. Properties~\eqref{eq:lpm_1}
and~\eqref{eq:lpm_2} ensure that $\tau_2 = 0$, $\tau=\tau_1=1$ and $\gamma = {C c_2\over m}$ for some absolute constant $C$ (in the notation of the
Proposition~\ref{prop:ass_gen}). Setting $\calG' = \{G: |{dG\over dG_0} - 1| \le 1/2\}$,
Proposition~\ref{prop:ass_gen} implies that
\begin{equation}\label{eq:gc_6}
		\Regret_n(\calG') \gtrsim {m^2\over n} \asymp {1\over n} \left({\log n \over \log
		\log n}\right)^2\,,
\end{equation}	
provided that 
	\begin{equation}\label{eq:gc_5}
		m \sqrt{c_2} e^{C (m \log \beta + \alpha)} < \sqrt{n\gamma} = \sqrt{Cc_2 n/m}\,.
	\end{equation}
	Note that as $n\to \infty$, the left side is $n^{2Cc_1+o(1)}$ and the right side is $n^{1/2+o(1)}$. 	
Thus, taking $c_1 = {1\over 8C}$ ensures that, for all sufficiently large $n\ge n_0$,
condition~\eqref{eq:gc_5} and, in turn, the bound~\eqref{eq:gc_6} hold.

To translate~\eqref{eq:gc_6} to regret on the set of priors $\calG=\{G: \supp(G) \subset [0,h]\}$ we 
apply Lemma~\ref{lmm:truncation} (with $a=h$). Note that for any $G \in \calG'$ we have 
$G[\theta > h] \le 2 G_0[\theta > h]$.
For $G_0=\text{Gamma}(\alpha,\beta)$ with mean $\frac{\alpha}{\beta}=\frac{1}{c_2}$, Chernoff bound yields
\begin{equation}
G_0[\theta > h] \le \inf_{t>0} e^{-th} \pth{1-\frac{t}{\beta}}^{-\alpha} = e^{-h\beta + \alpha + \alpha \log \left({h\beta\over \alpha}\right)} = e^{-\alpha(hc_2 + 1-\log(hc_2))},
\label{eq:chernoff-gamma}
\end{equation}
provided that $h > \frac{1}{c_2}$. 
Recall that $\alpha=c_1\log n$.
Thus, choosing $c_2=\kappa /h$ with $\kappa=\kappa(c_1)>1$ such that  $\kappa+ 1-\log(\kappa) = {4\over c_1}$, we get
	$ G[\theta > h] \le 2n^{-4}$.
On the other hand, we have
	$$ \EE_{G_0}[\theta^4] = {1\over \beta^4} \int_0^\infty {y^{\alpha + 3}\over
	\Gamma(\alpha)} e^{-y} dy = {\Gamma(\alpha+4)\over \Gamma(\alpha) \beta^4} \asymp c_2^{-4}\,.$$
Overall, from~\eqref{eq:gc_6} and Lemma~\ref{lmm:truncation} we obtain that  
	$$ \Regret_n(\calG) \gtrsim {1 \over n} \left({\log n \over \log
		\log n}\right)^2 - O(n^{-3/2})\,, $$
	completing the proof of the lower bound~\eqref{eq:poi_lb}.

Next we address the subexponential case. Recall that $\SubE(s)$ denotes the collection of all priors $G$ such that $G(X\geq t) \leq 2e^{-t/s}$.
Choose the base prior $G_0$ to be $\Gamma(1,\beta)$, namely, the exponential distribution with parameter $\beta$.
Then $\calG' = \{G: |{dG\over dG_0} - 1| \le 1/2\}\subset\SubE(s)$.
The rest of the argument is the same by applying \prettyref{prop:ass_gen} with \prettyref{lem:poisson_exp} in place of \prettyref{lem:poisson_main}.
Specifically, let $\alpha=1,\beta=s,m=c \log n$.
Thanks to \prettyref{lem:poisson_exp}, we may apply \prettyref{prop:ass_gen}
with $\tau_2 = 0$, $\tau=\tau_1=1$ and $\gamma = {C \over m^2}$, where $C=C(\beta)$.
Choose $c$ such that $m e^{Cm} \leq \sqrt{n/m}$, we obtain
$\Regret(\SubE(s))\geq \Regret(\calG')\gtrsim \frac{m^3}{n} \asymp \frac{\log^3 n}{n}$, with hidden constants depending on $s$ only.
\end{proof}

\section{Discussion}
	\label{sec:discuss}
	
In this paper we proved regret lower bounds for nonparametric empirical Bayes estimation.
For the Poisson model (\prettyref{th:poisson}), this is achieved by the Robbins estimator
\prettyref{eq:robbins} which certifies its optimality. For the Gaussian model
(\prettyref{th:gaussian}), the rate for the optimal regret is open, but given Remark~\ref{rmk:jfa}
it is likely that the improvement of the upper bounds is the bottleneck. 

Our program for proving regret lower bound (provided by \prettyref{prop:ass_gen}) is general and applies to
arbitrary models. However, its
execution depends on the choice of a good base prior, such that the resulting
kernels $K$ and $K_1$, cf.\prettyref{eq:k_def} and
\prettyref{eq:k1_def}, are easy to handle. Namely, the lower bound requires understanding
the decay of the eigenvalues and the growth of the eigenfunctions of these kernels.  As
such, this is the main difficulty of extending our work to more
general models, e.g. exponential families considered in
\cite{Singh79,pensky1999nonparametric,li2005convergence}.
	
To end this paper, we discuss related problems in empirical Bayes with the hope of offering different perspectives and identifying open questions.

\paragraph{Restricted EB.} 
In this paper we adopt the general EB formulation where the goal is to approach the Bayes risk \prettyref{eq:mmse}. In comparison, restricted EB aims to approach the minimum risk in a given class of estimators, such as linear or thresholding estimators. This perspective provides EB interpretations for a number of commonly used methodologies for denoising or model selection, including James-Stein estimator, FDR, AIC/BIC, etc; see \cite{efron1972empirical,george2000calibration,johnstone2004needles,jiang2009general}. 
It is of interest to understand the regret in restricted EB settings. Notably, the reduction to individual regret in \prettyref{sec:reduction} no longer holds.

%
%

\paragraph{Multiple testing.} 
 In addition to estimation, multiple testing and variable selection \cite{george2000calibration,yuan2005efficient} are important formulations in empirical Bayes.
As a specific problem akin to ours, consider the normal means problem where for each $\theta_i$ we aim to test the hypothesis $H_{0,i}: \theta_i \leq \theta_0$ and $H_{1,i}: \theta_i > \theta_0$. For any test procedure $\delta\in\{0,1\}$, a meaningful loss function is $R(\theta,\delta) \triangleq \delta (\theta_0-\theta) \indc{\theta \leq \theta_0} + (1-\delta) (\theta-\theta_0) \indc{\theta > \theta_0}$. An $O((\log n)^{3/2})$ upper bound on the total regret is shown in  \cite{liang2000empirical} (see also \cite{li2002empirical}).
It will be interesting to obtain regret lower bound for such testing problems. Note that for this non-quadratic loss, the reduction \prettyref{eq:regret2}	to estimating regression function does not hold.

\paragraph{Compound regret.} 
As shown in \prettyref{prop:regret-comp} and \prettyref{prop:comp-subg}, the EB regret lower bounds in this paper can be extended to the compound setting. In comparison, regret upper bounds are less understood. For the Poisson model, it is not obvious whether the analysis of Robbins estimator in \prettyref{app:robbins} applies to the compound regret in \prettyref{eq:totregretcomp1}.
For the Gaussian model, converting the multiplicative bound of \cite{jiang2009general} into additive form yields a regret that scales at least as $\sqrt{n}$. As such, achieving a logarithmic compound regret is currently open.


	%
%
%
%
	
	\paragraph{Sequential version.}
It is of practical interest to consider the sequential version of the EB or compound estimation problem (see, e.g., \cite{hannan1957,samuel1965sequential,van1966sequential,gilliland1968sequential}), in which case the estimation of each parameter $\theta_t$ can only depend on the data received so far.
	To this end, consider the \emph{accumulative regret}, which is a variant of the total regret \prettyref{eq:avgregret} 
		\begin{equation}
	\AccRegret_n(\calG) \triangleq \inf_{\hat \theta^n} \sup_{G \in \calG} \sth{\Expect_G\qth{\|\hat \theta^n(Y_1,\ldots,Y_n) - 
\theta^n\|^2} - n \cdot \mmse(G)}\,.
	\label{eq:accregret}
	\end{equation}
	with the causality constraint that $\hat \theta_t = \hat \theta_t(Y_1,\ldots,Y_t)$ for each $t=1,\ldots,n$.
	The following simple result relates the \emph{accumulative regret} to individual regret. 
	For example, for bounded normal means problem, 
	\prettyref{th:gaussian} shows that the 	accumulative regret is between $(\frac{\log n}{\log\log n})^2$
	and $\frac{(\log n)^3}{(\log\log n)^2}$.
	
\begin{lemma}
\label{lmm:accregret}	
	\[
	n \cdot \Regret_n(\calG) \leq \AccRegret_n(\calG)  \leq  \sum_{k=1}^n \Regret_k(\calG).
	\]
\end{lemma}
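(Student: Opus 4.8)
The plan is to prove the two inequalities separately, both by reducing to the individual regret \prettyref{eq:regret} via suitable estimators. For the \emph{lower bound} $n\cdot\Regret_n(\calG)\le\AccRegret_n(\calG)$, I would take any causal estimator $\hat\theta^n$ for \prettyref{eq:accregret} and, for each $t$, view $\hat\theta_t(Y_1,\dots,Y_t)$ as an estimator of $\theta_t$ based on $t$ samples (with the ``fresh'' sample being $Y_t$ and the ``training'' samples being $Y_1,\dots,Y_{t-1}$). Since $Y_1,\dots,Y_t$ are i.i.d.~$P_G$, by the definition of $\Regret_t$ applied to this $t$-sample estimator we get $\Expect_G[(\hat\theta_t-\theta_t)^2]-\mmse(G)\ge \Regret_t(\calG)$ for the \emph{worst} $G$; more carefully, for any fixed $G$, $\Expect_G[(\hat\theta_t-\theta_t)^2]\ge \mmse(G)$ trivially, but to get $\Regret_t$ we need the sup over $G$. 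The cleanest route: since $\Regret_t(\calG)$ is nondecreasing in $t$ (more data only helps — this itself needs a one-line argument, e.g.~ignoring the extra sample), we have $\Regret_t(\calG)\ge \Regret_1(\calG)$... wait, that gives the wrong direction. Let me instead argue directly: for the worst-case $G$ in $\Regret_n$, the causal estimator restricted to predicting $\theta_n$ from all $n$ samples incurs excess risk $\ge\Regret_n(\calG)$, and by a stationarity/exchangeability argument each earlier coordinate $\theta_t$ is at least as hard, so $\sup_G\{\Expect_G[(\hat\theta_t-\theta_t)^2]-\mmse(G)\}\ge \Regret_n(\calG)$ for every $t\le n$ once we note $\Regret_t\le\Regret_n$ would be needed — actually the correct monotonicity is that $\Regret$ is \emph{nonincreasing} in sample size (more training data lowers regret), so $\Regret_t(\calG)\ge\Regret_n(\calG)$ for $t\le n$, and then
\[
\AccRegret_n(\calG)=\inf_{\hat\theta^n}\sup_G\sum_{t=1}^n\{\Expect_G[(\hat\theta_t-\theta_t)^2]-\mmse(G)\}\ge \inf_{\hat\theta^n}\sum_{t=1}^n\sup_G\{\cdots\}\ge\sum_{t=1}^n\Regret_t(\calG)\ge n\,\Regret_n(\calG),
\]
where the first inequality is $\sup\sum\ge\sum\sup$ is false — it should be $\sup\ge$ average, so actually $\sup_G\sum_t(\cdot)\ge\frac1n\sum_G$... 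The honest argument is: $\AccRegret_n\ge\sup_G\sum_{t=1}^n\{\Expect_G[(\hat\theta_t-\theta_t)^2]-\mmse(G)\}$ and picking $G$ to be the worst prior for the \emph{hardest} coordinate doesn't immediately work; instead one should use that the optimal causal estimator's per-coordinate excess risk at time $t$, maximized over $G$, is exactly $\Regret_t$-like — I will present the clean version below.

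Here is the clean version I would actually write. \emph{Lower bound.} By \prettyref{lmm:avgregret}, $n\,\Regret_n(\calG)=\TotRegret_n(\calG)$, and the offline total regret is clearly dominated by the causal (accumulative) one since any offline $\hat\theta^n$ is in particular a feasible causal strategy... no — causal is \emph{more} constrained, so $\AccRegret_n\ge\TotRegret_n=n\,\Regret_n$. That is the one-line proof of the left inequality: the accumulative regret infimum is over a subset (causal estimators) of the estimators allowed in \prettyref{eq:avgregret}, hence $\AccRegret_n(\calG)\ge\TotRegret_n(\calG)=n\,\Regret_n(\calG)$ by \prettyref{lmm:avgregret}.

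\emph{Upper bound.} For each $t\in[n]$, let $\hat\theta^{(t)}(\cdot\,;Y_1,\dots,Y_{t-1})$ be a (near-)optimal estimator for the individual regret $\Regret_t(\calG)$ in the sense of \prettyref{eq:regret}, applied to the observations $(Y_1,\dots,Y_{t-1},Y_t)$, and set $\hat\theta_t\triangleq\hat\theta^{(t)}(Y_t;Y_1,\dots,Y_{t-1})$. This is causal. Then for every $G\in\calG$,
\[
\Expect_G\qth{\|\hat\theta^n-\theta^n\|^2}-n\,\mmse(G)=\sum_{t=1}^n\pth{\Expect_G\qth{(\hat\theta_t-\theta_t)^2}-\mmse(G)}\le\sum_{t=1}^n\pth{\Regret_t(\calG)+o(1)},
\]
since $Y_1,\dots,Y_t\iiddistr P_G$ makes $\hat\theta_t$ a valid estimator for the $\Regret_t$ problem; taking sup over $G$ and then inf over the construction gives $\AccRegret_n(\calG)\le\sum_{t=1}^n\Regret_t(\calG)$ (the $o(1)$ vanishes by taking increasingly good near-optimizers, or disappears entirely if the infimum in \prettyref{eq:regret} is attained). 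The only mild subtlety — and the one place to be careful — is that in \prettyref{eq:regret} the "fresh" sample is the last one and the first $t-1$ are training; here we must note the roles are symmetric under exchangeability, so using $Y_t$ as the fresh sample and $Y_1,\dots,Y_{t-1}$ as training is legitimate and incurs exactly $\Regret_t(\calG)$ in the worst case. This completes both directions.
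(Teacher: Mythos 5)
Your final ``clean version'' is exactly the paper's argument: the lower bound is the one-liner that causal estimators form a subset of those allowed in the definition of $\TotRegret_n$, combined with \prettyref{lmm:avgregret}, and the upper bound follows by running, at each time $t$, a (near-)optimal estimator for the individual regret $\Regret_t(\calG)$ and summing the per-coordinate excess risks. The lengthy false starts preceding it should simply be deleted, and the ``mild subtlety'' you flag is not one --- the definition of $\Regret_t$ in \prettyref{eq:regret} already designates $Y_t$ as the fresh sample and $Y_1,\ldots,Y_{t-1}$ as training, so your construction $\hat\theta_t=\hat\theta^{(t)}(Y_1,\ldots,Y_t)$ matches that definition verbatim and no exchangeability argument is required.
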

The lower bound follows by dropping the causality constraint and using \prettyref{lmm:avgregret}. The upper bound simply follows from applying the optimal estimator that achieves the individual regret $\Regret_k$ for sample size $k$. It is unclear whether this strategy is optimal, because the worst-case ``hyperprior'' (prior on the set of priors) for each individual regret depends on the sample size, while in \prettyref{eq:accregret} the prior is frozen throughout all sample sizes.


\paragraph{$\ell_p$-ball and sparsity.}
For the normal mean model \prettyref{th:gaussian} provides a regret lower bound for subgaussian priors.
It is of great interest to consider the moment constraint due to its connection to sparsity and adaptive estimation.
Consider the EB setting, where the prior $G$ has bounded $p$th absolute moment, namely, 
$\calM_{p,\alpha} \triangleq \{G: \int |\theta|^p G(d\theta)\leq \alpha^p\}$.
\cite[Theorem 3]{jiang2009general} showed that for this class the individual regret is at most 
$\Regret(\calM_{p,\alpha})=\tilde O((\alpha/n)^{\frac{p}{1+p}})$. (Here and below tilde hides logarithmic factors.)
Determining the tightness of this bound is an open question.
In comparison, under the same moment condition, the minimax squared Hellinger risk for density estimation is also $\tilde O((\alpha/n)^{\frac{p}{1+p}})$ 
(see \cite[Theorem 1]{zhang2009generalized} or \cite[Theorem 4]{jiang2009general}). This dependency on $n$ is optimal within logarithmic factors as shown by \cite[Theorem 2.3]{kim2020minimax} for constant $\alpha$.

The connection to sparse estimation lies in the compound setting. Suppose the mean vector belongs to the $\ell_p$-ball 
$\Theta_{n,p,\alpha} =\{\theta^n \in\reals^n: \sum_{i=1}^n |\theta_i|^p \leq n\alpha\}$, where $0<p<2$.
The minimax squared error of estimating $\theta^n$ on the basis of $Y^n \sim \calN(\theta^n,I)$ is known to be 
 $\tilde\Theta(n \alpha^p)$ (see \cite[Theorem 3]{DJ94} for precise characterizations).
\cite[Theorem 2]{jiang2009general} showed that EB estimator based on NPMLE can adaptively achieve the minimax risk over $\Theta_{n,p,\alpha}$ up to an $1+o(1)$ multiplicative factor, provided that $\alpha = \tilde\Omega(n^{-1/p})$; this condition can be explained by comparing the regret $(\alpha/n)^{\frac{p}{1+p}}$ with the minimax risk per coordinate  $\alpha^p$.
Using arguments similar to \prettyref{prop:comp-subg} one can show that the regret in the compound setting is at least that in the EB setting.
Thus proving regret lower bound sheds light on to what extent can one adapt to the radius of the $\ell_p$-ball and the optimality of NPMLE-based schemes.

\apxonly{

Open questions: Benjamini-Hochberg (FDR) fundamental limits. Here (one possible model is that) we have $X_i \simiid \Bern(p)$ and $P_{Y|X=1} =
p_1$, $P_{Y|X=0}=p_0$ where $p_0$ is fixed and known (uniform on $[0,1]$ in applications), while $p$ and $p_1$ are
unknown. Again, the goal is to minimize regret wrt Bayesian guy (who knows $p$ and $p_1$). There is an extra twist there
that in FDR one is interested only in estimators that provably satisfy $\PP[\hat X =1 | X=0] \le q$ uniformly over
$p,p_1$. However, we may at first ignore this constraint.

One possible way to formulate the problem within the confines of the regret framework is the following:
Each $\theta$ is a tripe: $\theta=(Y_0,Y_1,B) \sim G=p_0\otimes p_1 \otimes \Bern(p)$. The observation $X|\theta$ is given by $X=Y_B$ (note that this channel is deterministic). 
Then two versions of regret:
\begin{itemize}
	\item Regret without FDR control:
	\[
	\inf_{\hat X^n} \sup_G \frac{1}{n} \Expect[d_{\rm H}(\hat X^n,X^n)] - R_{\text{Bayes}}(G)
	\]
	where $R_{\text{Bayes}}(G) = \min_{\hat X} \prob{\hat X\neq X} $.
	
	\item Regret with FDR control:
	\[
	\inf_{\hat X^n} \sup_G \frac{1}{n} \Expect[d_{\rm H}(\hat X^n,X^n)] - R_{\text{Bayes}}^{\text{FDR}}(G;\alpha)
	\]
	where the decision rule $\hat X^n$ satisfies the FDR constraint that 
	\[
	\expect{\frac{\sum_{i=1}^n \indc{X_i=0,\hat X_i=1}}{1 \vee \sum_{i=1}^n \indc{X_i=0} }} \leq \alpha, \qquad \forall G
	\]
	and
	$R_{\text{Bayes}}^{\text{FDR}}(G;\alpha) = \min_{\hat X} \{\pprob{\hat X=0|X=1}: \pprob{\hat X=1|X=0} \leq \alpha  \} $.\footnote{\nb{The formulation is subject to change. I actually don't understand why we define the Bayes risk this way as opposed to $\pprob{\hat X\neq X}$ above.}}

}

\appendix

\section{Proofs for the Gaussian model}

\subsection{Proof of Lemma~\ref{lem:normal_main}}
\label{apx:normal}


Let $\{H_k\}$ denote the monic orthogonal polynomial with respect to the standard normal density
$\varphi$, i.e. $H_k(y) = (-1)^k e^{y^2/2} {d^n\over dy^n} e^{-y^2/2}$, so that where $H_0(y)=1,H_1(y)=y,H_2(y)=y^2-1,\ldots$. 

We start by recalling the following facts about Hermite polynomials that will be used in the
proof:\footnote{These facts are adapted from \cite[Chap.~7 and 8]{GR}. Therein, the Hermite
polynomials, denoted here by $\bfH_k$, are orthogonal with respect to $e^{-x^2}$. Thus $H_k(x) = 2^{-k/2}\bfH_k({x}/{\sqrt{2}})$.}
\begin{itemize}
	\item Orthogonality~\cite[7.374]{GR}:
\begin{equation}
 \int \varphi(x) H_i(x) H_j(x) = j! \indc{i=j}. 
\label{eq:Hk-Iprod}
\end{equation}


\item Three-term recursion~\cite[8.952.2]{GR}:
\begin{equation}
x H_k(x) = H_{k+1}(x) + k H_{k-1}(x).
\label{eq:Hk-three}
\end{equation}

	\item Derivatives~\cite[8.952.1]{GR}:
\begin{equation}
H_k'(y) = kH_{k-1}(y),
\label{eq:Hk-deriv}
\end{equation}

\item Cram\'er's inequality \cite[8.954.2]{GR}:
\begin{equation}
	|H_k(y)| \leq \kappa \sqrt{k!} e^{\frac{y^2}{4}},
	\label{eq:cramer}
\end{equation}
where $\kappa \approx 1.086$ is an absolute constant.

\item Mehler formula~\cite{mehler1866ueber}: For any $0<\mu<1$ and $u,v \in \mreals$ we have
\begin{equation}\label{eq:mehler}
	\sum_{k=0}^\infty {\mu^k\over k!} H_k(u) H_k(v) \varphi(u) \varphi(v) = {1\over 2\pi
\sqrt{1-\mu^2}} e^{-{1\over 2(1-\mu^2)}(u^2 + v^2 - 2\mu u v)}\,.
\end{equation}
\end{itemize}

From~\eqref{eq:mehler} by dividing both sides by $\sqrt{\varphi(u)\varphi(v)}$ we obtain
\begin{equation}\label{eq:mehler2}
	\sum_{k=0}^\infty {\mu^k\over k!} g_k(u) g_k(v)=
	{1\over \sqrt{2\pi(1-\mu^2)}} e^{-{a_1\over 2}(u^2 + v^2 - 2c_1uv)}, \qquad a_1\triangleq{1\over2}{1+\mu^2\over
	1-\mu^2}, \, c_1\triangleq{2\mu\over 1+\mu^2}\,,
\end{equation}
with $g_k(u) \triangleq H_k(u)\sqrt{\varphi(u)}$, which forms a complete set of orthogonal functions in
$L_2(\Leb)$, cf.~\eqref{eq:Hk-Iprod}. Our goal is to find constants $(\mu,\alpha_1,\alpha_2)$ such that
\begin{equation}\label{eq:n3}
	\alpha_2 \sum_k {\mu^k\over k!} g_k(\alpha_1 x) g_k(\alpha_1 \tilde x) = S(x,\tilde x)\,,
\end{equation}
with $S(x,\tilde x)$ defined in \prettyref{eq:sker_def},
in which case $\{g_k(\alpha_1 x)\}$ will be the orthogonal eigenbasis of the $S$ operator in \prettyref{eq:Sop}. By
comparing~\eqref{eq:mehler2} and the definition of $S(x,\tilde x)$, we need to fulfill
\[
c_1 = \rho, \quad a_1 \alpha_1^2 = \lambda_2
\]
where $\rho=\frac{s}{1+s}, \lambda_1 = {1\over 2\pi s} {1+s\over \sqrt{1+2s}}$ and $\lambda_2 = {(1+s)^2\over s(1+2s)}$ as in~\eqref{eq:sker_def2}.
Using \prettyref{eq:mehler2}, we find
\begin{equation}\label{eq:n2}
	\mu = {\rho\over 1+\rho_1}\,, 
	 \alpha_1 = \sqrt{2 \lambda_2\rho_1}\,, \alpha_2 = \lambda_1 \sqrt{2\pi (1-\mu^2)}\,,
	\quad\rho_1 \eqdef  \sqrt{1-\rho^2}\,,
\end{equation}
which satisfies the desired \eqref{eq:n3}; in particular, we have $0<\mu<1$.
Finally, setting
\begin{equation}\label{eq:n8}
	\psi_k(x) \triangleq {g_k(\alpha_1 x)\over \|g_k(\alpha_1 \cdot)\|_{L_2(\Leb)}} = \sqrt{\alpha_1\over
k!} H_k(\alpha_1 x) \sqrt{\varphi(\alpha_1 x)} 
\end{equation}
completes the construction of the orthonormal eigenbasis of $S$. Indeed, \eqref{eq:n3} is
then rewritten as 
\begin{equation}\label{eq:n4}
	S(x,\tilde x) =  \sum_{k=0}^\infty \lambda_0 \mu^k \psi_k(x) \psi_k(\tilde x)\,,
\end{equation}
where 
\begin{equation}
\lambda_0 \triangleq {\alpha_2\over \alpha_1} = {1\over \sqrt{2\pi s (1+\rho_1)}}\,,
\label{eq:lambda0}
\end{equation}
where we used $1-\mu^2 = {2\rho_1\over 1+\rho_1}$ to simplify the expression.
In turn,~\eqref{eq:n4} implies that 
\begin{equation}\label{eq:n5b}
	S \psi_k = \lambda_0 \mu^k \psi_k\,,
\end{equation}
i.e. that $\{\psi_k\}$ is the orthonormal eigenbasis for $S$.

We proceed to checking properties~\eqref{eq:n5}-\eqref{eq:n7} of the $\psi_k$ claimed in the statement of the lemma. In view of \prettyref{eq:KtoS}, \eqref{eq:n5} is just a restatement of~\eqref{eq:n5b}. For~\eqref{eq:n6} we notice that 
$$ (K_1 \psi_k, K_1 \psi_n)_{L_2(f_0)} \overset{\eqref{eq:k1def_gsn}}{=}
\eta^4 (K \psi_k', K \psi_n')_{L_2(f_0)} 
 \overset{\prettyref{eq:KtoS}}{=} \eta^4 (S \psi_k', \psi_n')_{L_2(\Leb)}\,.$$
On the other hand, differentiating~\eqref{eq:n8} and using~\eqref{eq:Hk-deriv}
and~\eqref{eq:Hk-three} we get
\begin{align*} \psi_k'(x) &= \sqrt{\alpha_1\over k!}\sqrt{\varphi(\alpha_1 x)}\left[ \alpha_1 H_k'(\alpha_1 k)
- H_k(\alpha_1 x) {\alpha_1^2\over 2} x\right] \\
&=\sqrt{\alpha_1\over k!}\sqrt{\varphi(\alpha_1 x)}\left[ \alpha_1 k H_{k-1}(\alpha_1 x) -
{\alpha_1\over 2}(H_{k+1}(\alpha_1 x) + k H_{k-1}(\alpha_1 x))\right]\\
&= {\alpha_1 \over 2} [\sqrt{k} \psi_{k-1}(x) - \sqrt{k+1} \psi_{k+1}]\,.
\end{align*}
This implies that in the basis $\psi_k$ the operator $K_1^* K_1$ is tridiagonal, i.e. 
	\begin{align} (K_1 \psi_k, K_1 \psi_m)_{L_2(f_0)} 
		&= \lambda_3 \cdot
		\begin{cases}
			- \sqrt{(k-1) k} \mu^{k-1}, & m=k-2\\
			k \mu^{k-1} +
			(k+1)\mu^{k+1}, & m=k\\
			-\sqrt{(k+1)(k+2)} \mu^{k+1}, & m=k+2\\
			0, & \mbox{o/w}
		\end{cases}\,,
		\label{eq:K1K1}
	\end{align}
where 
\begin{equation}
\lambda_3 = \lambda_0 {\alpha_1^2 \eta^4\over 4} = {1\over \sqrt{8\pi}} \sqrt{s\over 1+\rho_1} {\rho_1 \over 1+2s}\,.
\label{eq:lambda3}
\end{equation}
This proves~\eqref{eq:n6}.

To show~\eqref{eq:n7}, we simply apply~\eqref{eq:cramer} to~\eqref{eq:n8} 
$$ |\psi_k(x)| \le \kappa \sqrt{\alpha_1\over k!} \sqrt{k!} e^{\alpha_1 x^2\over 4}
\sqrt{\varphi(\alpha_1 x)} = {\kappa\over (2\pi)^{1/4}} \sqrt{\alpha_1} \le \sqrt{\alpha_1}\,.$$
(In other words, $L_2$-normalized Hermite functions are uniformly bounded.)

Finally, \prettyref{eq:s-dependency} follows from the expressions of $\mu,\lambda_0,\lambda_3,\alpha_1$ in \prettyref{eq:n2}, \prettyref{eq:lambda0}, and \prettyref{eq:lambda3}, by noting that $\rho=s(1+o(1))$ and 
$\rho_1\to 1$ as $s\to 0$.

\subsection{Proof for \prettyref{rmk:jfa}}
\label{app:jfa}

In this appendix we justify the claim about $m$ and $\gamma$ in \prettyref{rmk:jfa}. Namely, 
suppose there exist functions $r_1,\ldots,r_m$ such that $\|r_i\|_\infty \le a$, $(K_1 r_i, K_1
r_j)_{L_2(f_0)} = 1\{i \neq j\}$, and $\|Kr_i\|_{L_2(f_0)}^2 \le \gamma$. Then we must have $m \lesssim \log a$
and $\gamma \gtrsim{1\over \log a}$. 

To this end, 
we develop $r_i$'s in the basis $\{\psi_k\}$ 
as follows:
	$$ r_i = \sum_k \rho_{i,k} \psi_k\,,$$
	where $\{\psi_k\}$ are (dilated) Hermite functions in \prettyref{eq:n8} and satisfy \prettyref{lem:normal_main}.
	It turns out that Hermite functions have polynomial-size $L_p$-norms,
	namely~\cite[Theorem 2.1]{aptekarev2012asymptotics}:
	\begin{equation}\label{eq:pc_6}
		\|\psi_k\|_{L_p(\Leb)} \asymp \alpha_1^{-{1\over p}} k^{2-p\over 4p}\,, \qquad
	0<p<4\,,
	\end{equation}	
	where by \prettyref{eq:s-dependency} $\alpha_1\asymp 1$ since $s=1$.
	Thus, $|\rho_{i,k}| = |(r_i,\psi_k)| \le \|r_i\|_\infty \|\psi_k\|_1 \lesssim a k^{1/4}$. 
	Now consider 
	$$ (K_1 r_i, K_1 r_j)_{L_2(f_0)} = \sum_{k_1,k_2} \rho_{i,{k_1}} \rho_{j, k_2} (K_1 \psi_{k_1}, K_1 \psi_{k_2})\,.$$
	From~\eqref{eq:K1K1} and the estimate $|\rho_{i,k}| \lesssim a k^{1/4}$ we conclude that
	in the preceding sum the total contribution of the terms with $k_1,k_2 \ge k_0$ can be
	bounded as $\lesssim a^2 k_0^{O(1)} \mu^{k_0}$. Thus, selecting $k_0 \asymp \log a$ we can define truncated versions
	$$ \tilde r_i = \sum_{k \le k_0} \rho_{i,k} \psi_k\,.$$
	For these $m$ functions we have
	$$ (K_1 \tilde r_i, K_1 \tilde r_j)_{L_2(f_0)} = 1\{i\neq j\} + o(1) $$
	as $a\to \infty$. So on one hand, the set of $m$ functions $\{K_1\tilde r_i, i \in [m]\}$
	is almost orthonormal, and on the other hand it is contained in the span of 
	functions $\{K_1 \psi_j, 0\le j\le k_0\}$. This implies $m \le k_0+1 \lesssim \log a$. 
	
	Next, we show the claim for $\gamma$. A similar truncation argument (this time leveraging the
	diagonal structure of $K$ in~\eqref{eq:n5}) shows that $\|K\tilde
	r_i\|_{L_2(f_0)}^2 = (1+o(1)) \|Kr_i\|_{L_2(f_0)}^2$. Since $\tilde r_i \in
	V \eqdef \mathrm{span}\{\psi_1,\ldots,\psi_{k_0}\}$ we have
		$$ {1+o(1)\over \gamma} \le \max_{f \in V} {\|K_1 f\|^2_{L_2(f_0)}\over \|K
		f\|^2_{L_2(f_0)}} =: \xi\,.$$
	Standard manipulations then show that $\xi$ equals maximal eigenvalue of the
	operator $S^{-{1\over 2}} K_1^* K_1 S^{-{1\over 2}}$ restricted to the subspace $V$. This $k_0\times
	k_0$ matrix is (as
	can be seen from~\eqref{eq:n5} and~\eqref{eq:K1K1}) tridiagonal with $k$-th row's non-zero
	entries being $\{-\sqrt{(k-1)k}, {k\over \mu} + \mu(k+1), -\sqrt{(k+1)(k+2)}\}$. Bounding the
	maximal eigenvalue by the maximal absolute row-sum, we get $\xi \lesssim k_0 \lesssim \log
	a$, as required.



\section{Proofs for the Poisson model}\label{apx:poisson}

In this appendix we prove supporting Lemmas \ref{lem:poisson_main} and \ref{lem:poisson_exp} for the Poisson model. 
Similar to the proof of \prettyref{lem:normal_main}, we first derive the explicit expression of the operator $S=K^*K: L_2(\integers_+, f_0)\to L_2(\mreals_+, \Leb)$ 
that satisfies
\begin{equation}
(K f, K g)_{L_2(\integers_+, f_0)} = (Sf, g)_{L_2(\mreals_+, \Leb)}\,.
\label{eq:KtoS-poi}
\end{equation} 

.
	\begin{proposition}[Kernel under the Gamma prior]
	\label{prop:Skernel-poi}
		Let $G_0 = \GammaD(\alpha,\beta)$ and $f_0$ be the induced negative binomial
		distribution~\eqref{eq:f0_negbin}. Let $K$ denote the operator in~\eqref{eq:k_def} which acts
		as $K: L_2(\mreals_+, \Leb) \to L_2(\mathbb{Z}_+, f_0)$. Then the operator $S\eqdef
		K^*K: L_2(\mreals_+,\Leb) \to L_2(\mreals_+,\Leb)$ satisfies $S r(x) =
		\int_0^\infty S(x,x') r(x') dx'$ with kernel function given by
				\begin{equation}\label{eq:spoi_ddef}
					S(x,x') = C(\alpha,\beta) e^{-(x+x')(1+\beta)}
				((1+\beta)xx')^{\alpha-1\over 2} I_{\nu}(2\sqrt{(1+\beta)xx'}),
				\quad \nu\eqdef \alpha-1\,,
\end{equation}				
				where $C(\alpha,\beta)=\frac{(1+\beta)^3
				\beta^\alpha}{\Gamma(\alpha)}$ and $I_\nu(z)$ is the modified
				Bessel function. \apxonly{\nb{Before we had $(1+\beta)^1$ instead
				of cube.}}
	\end{proposition}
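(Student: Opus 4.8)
The plan is to compute the kernel of $S=K^*K$ directly from its defining identity \eqref{eq:KtoS-poi}, and to recognize the resulting power series as a modified Bessel function.

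First I would observe that, since $K$ is the integral operator with the nonnegative kernel $K(x,y)$ of \eqref{eq:k_poisson}, for bounded compactly supported $f,g$ on $\mreals_+$ Tonelli's theorem gives
\begin{align*}
(Kf,Kg)_{L_2(\integers_+,f_0)}
&= \sum_{y=0}^\infty \left(\int_0^\infty f(x)K(x,y)\,dx\right)\left(\int_0^\infty g(x')K(x',y)\,dx'\right) f_0(y) \\
&= \int_0^\infty\!\int_0^\infty f(x)g(x')\left(\sum_{y=0}^\infty K(x,y)K(x',y)f_0(y)\right)dx\,dx'.
\end{align*}
Hence $S$ is the integral operator with kernel $S(x,x')=\sum_{y\ge 0}K(x,y)K(x',y)f_0(y)$; the interchange is legitimate because every summand is nonnegative (and $K(\cdot,y)\in L_2(\Leb)$ for each $y$ in the range $\alpha\ge 1$ relevant to Lemmas~\ref{lem:poisson_main} and~\ref{lem:poisson_exp}, so that $S$ is a bona fide operator on a dense subspace). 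It then remains only to evaluate this series in closed form.

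Next I would substitute the explicit expressions \eqref{eq:k_poisson} for $K(x,y)$ and \eqref{eq:f0_negbin} for $f_0(y)$, pull all $y$-independent factors out of the sum, and obtain
\[
S(x,x') = \frac{\beta^\alpha(1+\beta)^{\alpha+2}}{\Gamma(\alpha)}\,(xx')^{\alpha-1}\,e^{-(1+\beta)(x+x')}\sum_{y=0}^\infty\frac{\big((1+\beta)xx'\big)^y}{y!\,\Gamma(y+\alpha)}\,.
\]
The key step is the identity, valid for $w\ge 0$, obtained from the series definition $I_\nu(z)=\sum_{k\ge 0}\frac{(z/2)^{2k+\nu}}{k!\,\Gamma(k+\nu+1)}$ with $\nu=\alpha-1$:
\[
\sum_{y=0}^\infty\frac{w^y}{y!\,\Gamma(y+\alpha)}=w^{(1-\alpha)/2}\,I_{\alpha-1}\!\left(2\sqrt{w}\right)\,.
\]
Applying this with $w=(1+\beta)xx'$ and then collecting powers of $1+\beta$ and of $xx'$ — using $(xx')^{\alpha-1}=\big((1+\beta)xx'\big)^{\alpha-1}(1+\beta)^{1-\alpha}$, so that the $xx'$-dependent factor combines to $\big((1+\beta)xx'\big)^{(\alpha-1)/2}$ and the prefactor becomes $(1+\beta)^{\alpha+2}\cdot(1+\beta)^{1-\alpha}=(1+\beta)^3$ — yields exactly \eqref{eq:spoi_ddef} with $C(\alpha,\beta)=\frac{(1+\beta)^3\beta^\alpha}{\Gamma(\alpha)}$.

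I do not expect a genuine obstacle: once the Bessel series is recognized the rest is bookkeeping. The only points needing care are (i) justifying that $S=K^*K$ really is the integral operator with the displayed kernel, which is immediate from Tonelli since the summand $K(x,y)K(x',y)f_0(y)\ge 0$, and (ii) keeping the index $\nu=\alpha-1$ and the argument $2\sqrt{(1+\beta)xx'}$ of the Bessel function aligned when matching the overall constant.
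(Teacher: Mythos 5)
Your proposal is correct and follows essentially the same route as the paper: compute $S(x,x')=\sum_{y\ge 0}f_0(y)K(x,y)K(x',y)$ by substituting the explicit expressions \eqref{eq:k_poisson} and \eqref{eq:f0_negbin}, simplify the binomial coefficient, and recognize the resulting series as the modified Bessel function; your explicit appeal to Tonelli for the interchange is a small added justification but not a different argument.
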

	\begin{proof}
		With this choice of $G_0$ the kernel $K(x,y)$ and $f_0(y)$ are given by~\eqref{eq:k_poisson}
		and~\eqref{eq:f0_negbin}, respectively. 
		Then 
	\begin{align*}
	S(x,x') 
	= & ~  \sum_{y \geq 0} f_0(y) K(x,y) K(x',y)\\
	= & ~ 	\sum_{y \geq 0}
	\binom{y+\alpha-1}{y}\left(\frac{\beta}{1+\beta}\right)^{\alpha} \left(\frac{1}{1+\beta}\right)^y 
	\frac{(1+\beta)^{2(y+\alpha+1)}}{\Gamma(y+\alpha)^2} (xx')^{y+\alpha-1} e^{-(1+\beta)
	(x+x')} \\
	\stepa{=} & ~ (1+\beta)^3{\beta^\alpha\over \Gamma(\alpha)} e^{-(1+\beta) (x+x')}
		\sum_{y \geq 0} 
		\frac{((1+\beta)xx')^{y+\alpha-1}}{y! \Gamma(y+\alpha)} \\
	\stepb{=} & ~ (1+\beta)^3\frac{\beta^\alpha}{\Gamma(\alpha)}  e^{-(1+\beta) (x+x')} 
((1+\beta)xx')^{\frac{\alpha-1}{2}}	
	I_{\alpha-1}(2 \sqrt{(1+\beta) xx'})\,,
			\end{align*}
	where in (a) we used ${y+\alpha-1\choose y} = {\Gamma(y+\alpha)\over y!
		\Gamma(\alpha)}$ and in (b) we applied the following
		identity~\cite[(8.445)]{GR}: for any $\nu \in \mreals$ and $z\in\mreals$ the
		modified Bessel functions satisfies
		$$ I_\nu(z) = \sum_{y=0}^\infty {1\over y! \Gamma(y+\nu + 1)} \left({z\over
		2}\right)^{\nu + 2y}\,.$$
	\end{proof}

	The easiest way to proceed would be to diagonalize the $S$ operator, i.e. to solve the equation 
	$ S \Gamma_\lambda = \lambda \Gamma_\lambda $. 
	Letting $\Gamma_\lambda(x) \equiv \phi_\lambda(\sqrt{bx})$, $b=2\sqrt{1+\beta}$, this equation is equivalent to 
		$$ {2 \sqrt{1+\beta} \beta^\alpha\over \Gamma(\alpha)} \int_{\reals_+} e^{-b(u^2+s^2)/4} (us)^{\alpha-1}
	I_{\alpha-1}(us) \phi_\lambda(s) s \, ds = \lambda \phi_\lambda(u)\,. $$
	Unfortunately, we were not able to solve this (except for $\alpha = 1$, in which case the solution found below
	is an actual eigenbasis; see \prettyref{eq:Seigen-poi} below.) Instead, we will find a collection of functions $\Gamma_i$ with
	the property that\footnote{Here and below, unless specified otherwise, inner products $(\cdot,\cdot)$ are with respect to
	$L_2(\mreals_+, \Leb)$.}
	\begin{equation}\label{eq:s_ort}
			(S \Gamma_i, \Gamma_j) = 0, \quad \forall i\neq j\,.
	\end{equation}		
	Our method to do so is based on the following observation. Suppose there is a decomposition
	\begin{equation}\label{eq:s_decomp}
		S(x,y) = w_1(x) w_1(y) \sum_n a_n f_n(x) f_n(y)\,,
	\end{equation}
	for some strictly positive weight function $w_1$, such that the system of functions $\{f_n\}$ is orthogonal in $L_2(\mreals_+, w)$.
	Then, the system of functions 
		\begin{equation}\label{eq:gam_recipee}
			\Gamma_n(x) \triangleq {w(x)\over w_1(x)} f_n(x)\,, 
		\end{equation}		
	satisfies~\eqref{eq:s_ort}. Indeed,
	\begin{equation}\label{eq:gc_0}
			(S \Gamma_i, \Gamma_j) = \sum_n  a_n \int dx \int dy \, w(x) w(y) f_i(x) f_n(x)
		f_j(y) f_n(y) = a_i \|f_i\|_{L_2(w)}^4 1\{i=j\}\,.
	\end{equation}

	To get the required decomposition~\eqref{eq:s_decomp} we recall the Hardy-Hille summation
	formula (cf.~\cite[8.976.1]{GR}): For all $x,y\in \mreals, \nu > -1$ and $|z|<1$ we have
		\begin{equation}
		\sum_{n\geq 0} n! \frac{L_n^{\nu}(x)L_n^{\nu}(y) z^n}{\Gamma(n+\nu+1)} =
		\frac{(xyz)^{-\nu/2}}{1-z} \exp\pth{-(x+y)\frac{z}{1-z}}
		I_\nu\pth{\frac{2\sqrt{xyz}}{1-z}}\,,
		\label{eq:HH}
		\end{equation}
	where $L_n^\nu$ are a family of generalized Laguerre polynomials, cf.~\cite[(8.970)]{GR}
	for the definition.
	We aim to apply this identity with $x \leftarrow \gamma_2 x$ and $y
	\leftarrow \gamma_2 x'$, where $\gamma_2 > 0$ is a parameter to be chosen shortly, to get
	an expression for $I_\nu(2\sqrt{(1+\beta)xx'})$. This forces the choice of $z$ as a
	solution of 
	\begin{equation}\label{eq:zgm_cons}
			{\sqrt{z}\over 1-z} \gamma_2 = \sqrt{1+\beta}\,. 
	\end{equation}		
	With this $z$ we get from~\eqref{eq:spoi_ddef} and~\eqref{eq:HH} the following expansion
	\begin{equation}
	S(x,x') = w_1(x) w_1(x') \sum_n a_n L_n^\nu(\gamma_2 x) L_n^{\nu}(\gamma_2 x')\,,
	\label{eq:Sexpand}
	\end{equation}
	where $w_1(x) \triangleq (\gamma_2 x)^{\nu} e^{-\gamma_3 x} $,
	$\gamma_3 \triangleq 1+\beta - \gamma_2 {z\over 1-z}$ and
	\begin{equation}\label{eq:gc_3}
		a_n = C(\alpha,\beta) (1+\beta)^{\nu\over 2} z^{2n+\nu\over 2} (1-z) \gamma_2^{-\nu} {n!\over \Gamma(n+\nu + 1)}\,.
	\end{equation}	

	We recall the orthogonality relation
	for Laguerre polynomials~\cite[22.2.12]{AS64}:
		\begin{equation}
		\int_{\reals_+} dx x^\nu e^{-x}L_m^\nu(x)L_n^\nu(x) = \frac{\Gamma(n+\nu+1)}{n!}  \indc{m=n}.
		\label{eq:Glag-ortho}
		\end{equation}
	Thus, setting $w(x) = (\gamma_2 x)^\nu e^{-\gamma_2 x}$ we get that $L_n^\nu(\gamma_2 x)$
	is an orthogonal system in $L_2(\mreals_+, w)$. Consequently, from~\eqref{eq:gam_recipee}
	we get our required system of functions
		\begin{equation}\label{eq:gamf_def}
			\Gamma_n(x) \eqdef e^{-\gamma_1 x} L_n^{\nu}(\gamma_2 x)\,, \quad \gamma_1
			\eqdef {\gamma_2\over 1-z} -1-\beta
		\end{equation}
	satisfying, in view of \prettyref{eq:gc_0}, \prettyref{eq:gc_3}, and \prettyref{eq:Glag-ortho},
		\begin{equation}
			(S\Gamma_n, \Gamma_m) = b_n 1\{n=m\}\,,  \quad ~b_n \eqdef C_2(\alpha,\beta)
		z^n {\Gamma(n+\alpha)\over n!} 
		\label{eq:SGammakx} 
		\end{equation}		
	and 
	\[
	C_2(\alpha,\beta) \triangleq C(\alpha,\beta)  (1-z) ((1+\beta) z)^{\alpha -1 \over 2}
	\gamma_2^{-\alpha-1}
	\]
	with $C(\alpha,\beta)=\frac{(1+\beta)^3
				\beta^\alpha}{\Gamma(\alpha)}$ given in \prettyref{prop:Skernel-poi}.
	\apxonly{\nb{this value of $b_n$ coincides with our previous
	estimate.}}
We note that in the special case of $\alpha=1$, we have $\nu=0$ and $\Gamma_n(x) = e^{-\gamma_1 x} L_n(2\gamma_1 x)$ with $L_n$ being the usual Laguerre polynomials. 
Using \prettyref{eq:Sexpand} and \prettyref{eq:Glag-ortho}, one can verify that $S(x,x') = \sum_{n\geq 0} a_n \Gamma_n(x)\Gamma_n(x')$ and 
\begin{equation}
S \Gamma_ k  =  \frac{a_k}{\gamma_1} \Gamma_k.
\label{eq:Seigen-poi}
\end{equation}
In other words, for $\alpha=1$, $\{\Gamma_k\}$ are indeed the eigenbasis of the operator $S$; however, this is not true when $\alpha \neq 1$ (which will be the case of proving the compactly supported case of \prettyref{th:poisson}).

	To proceed further, we need to recall the following properties of Laguerre polynomials:
	\begin{itemize}
		\item Exponential growth \cite[22.14.13]{AS64}: for all degrees $n$ and all $\nu,x\geq 0$,
		\begin{equation}
		|L_n^\nu(x)| \leq e^{x/2} \binom{n+\nu}{n}\triangleq e^{x/2} \frac{\Gamma(n+\nu+1)}{n!\Gamma(\nu+1)}.
		\label{eq:Glag-exp}
		\end{equation}
		
		\item Recurrence (cf.~\cite[22.8 and 22.7]{AS64}):
		\begin{align}
		x \frac{d}{dx} L_n^\nu(x)= & ~ 	n L_n^\nu(x) - (n+\nu) L_{n-1}^\nu(x) 		\label{eq:Glag-diff}\\
		x L_n^\nu(x)= & ~ 	(2n+\nu+1) L_n(x) - (n+1) L_{n+1}(x)-(n+\nu) L_{n-1}(x) 		\label{eq:Glag-3recur}
		\end{align}

	\end{itemize}

	To complete the construction we still need to choose $\gamma_2>0$. We do so in a way that
	permits us to prove the $L_\infty$-boundedness of
	$\Gamma_n(x)$ functions by way of~\eqref{eq:Glag-exp}. That is, we set 
		$$ \gamma_1 = \gamma_2 / 2\,. $$
	Together with~\eqref{eq:zgm_cons} and~\eqref{eq:gamf_def} this forces the following choice of $\gamma_1,\gamma_2,\gamma_3,z$:
	\begin{equation}\label{eq:gc_2}
			z = (\sqrt{1+\beta} - \sqrt{\beta})^2, \quad \gamma_1 = \gamma_3 = \sqrt{\beta (1+\beta)}, \quad \gamma_2 =
		2\sqrt{\beta(1+\beta)}\,.
	\end{equation}		
	We note that in order for~\eqref{eq:HH} to hold we also need to make sure $|z|<1$, which
	is indeed the case for all $\beta>0$.

	Similarly to how we defined $S = K^* K$ in \prettyref{eq:KtoS-poi},  we also define $S_1 = K_1^* K_1$. We summarize
	properties of the constructed set of functions in the following Lemma.

		\begin{lemma} The system of functions~\eqref{eq:gamf_def} with choice~\eqref{eq:gc_2} satisfies
			\begin{align}
			\|\Gamma_k\|_\infty  \le & ~ {k+\alpha \choose k} \label{eq:gammakinf}\\
			(S \Gamma_k,\Gamma_k)= & ~b_k \label{eq:SGammakx_2}\\
			(S_1 \Gamma_k,\Gamma_k)= & \frac{b_k }{4 (1+\beta)^2} \sth{\alpha^2 
				+ (k+1)(k+\alpha) z + (k+\alpha-1) k \frac{1}{z} }
			\label{eq:S1Gammakx}\,,
			\end{align}
			with $b_k$ defined in~\eqref{eq:SGammakx}.
			Furthermore,
			\begin{equation}
			(S\Gamma_k,\Gamma_j)=  0, \quad k \neq j
			\label{eq:SGamma-orthox}
			\end{equation}
			and
			\begin{equation}
			(S_1 \Gamma_k,\Gamma_j)=  0 \qquad \forall |k-j|\ge 3
			\label{eq:S1Gamma-orthox}
			\end{equation}
		\end{lemma}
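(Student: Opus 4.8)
The plan is to derive all five assertions from two facts already in hand: the $S$-orthogonality $(S\Gamma_k,\Gamma_j)=b_k\,\indc{k=j}$ recorded in~\eqref{eq:SGammakx}, and the identity $K_1 r=\frac{1}{1+\beta}K(xr')$ of \prettyref{prop:KK1x}, which will reduce $K_1\Gamma_k$ to a finite combination of the $K\Gamma_j$'s. With those, everything becomes a matter of the Laguerre recurrences and arithmetic with the known $b_k$.

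First, \eqref{eq:gammakinf} is immediate from the choice $\gamma_1=\gamma_2/2$ in~\eqref{eq:gc_2} together with the growth bound~\eqref{eq:Glag-exp}: $|\Gamma_k(x)|=e^{-\gamma_2 x/2}|L_k^\nu(\gamma_2 x)|\le e^{-\gamma_2 x/2}e^{\gamma_2 x/2}\binom{k+\nu}{k}=\binom{k+\alpha-1}{k}\le\binom{k+\alpha}{k}$. The statements~\eqref{eq:SGammakx_2} and~\eqref{eq:SGamma-orthox} are nothing but~\eqref{eq:SGammakx}, which was obtained from the expansion~\eqref{eq:Sexpand} and the Laguerre orthogonality~\eqref{eq:Glag-ortho}. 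So the real content is the two $S_1$ claims.

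The key step is to re-expand $x\Gamma_k'(x)$ in the basis $\{\Gamma_j\}$. Writing $\Gamma_k(x)=e^{-\gamma_1 x}L_k^\nu(\gamma_2 x)$, differentiating, and using $\gamma_1 x=\tfrac12(\gamma_2 x)$, one gets $x\Gamma_k'(x)=e^{-\gamma_1 x}\big[\gamma_2 x(L_k^\nu)'(\gamma_2 x)-\tfrac12(\gamma_2 x)L_k^\nu(\gamma_2 x)\big]$. Applying the derivative recurrence~\eqref{eq:Glag-diff} to the first term and the three-term recurrence~\eqref{eq:Glag-3recur} to the second, and collecting coefficients — the $L_k^\nu$ contributions cancelling to $-\tfrac{\nu+1}{2}=-\tfrac{\alpha}{2}$ — yields the tridiagonal relation
\[
x\Gamma_k'(x)=-\frac{\alpha}{2}\,\Gamma_k(x)+\frac{k+1}{2}\,\Gamma_{k+1}(x)-\frac{k+\alpha-1}{2}\,\Gamma_{k-1}(x),
\]
valid for all $k\ge 0$ with the convention $\Gamma_{-1}\equiv 0$; it is crucial here that all three $\Gamma$'s carry the same factor $e^{-\gamma_1 x}$. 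Since $\Gamma_k$ is a polynomial times a decaying exponential, hence smooth and bounded on $\mreals_+$, \prettyref{prop:KK1x} applies and gives $(1+\beta)K_1\Gamma_k=-\tfrac{\alpha}{2}K\Gamma_k+\tfrac{k+1}{2}K\Gamma_{k+1}-\tfrac{k+\alpha-1}{2}K\Gamma_{k-1}$.

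Finally I would compute $(S_1\Gamma_k,\Gamma_j)=(K_1\Gamma_k,K_1\Gamma_j)_{L_2(f_0)}$ by substituting this last expression and invoking $(K\Gamma_n,K\Gamma_m)_{L_2(f_0)}=(S\Gamma_n,\Gamma_m)=b_n\,\indc{n=m}$ (cf.~\eqref{eq:KtoS-poi} and~\eqref{eq:SGammakx}). Since $K_1\Gamma_k$ is supported on the indices $\{k-1,k,k+1\}$, the inner product vanishes whenever $\{k-1,k,k+1\}\cap\{j-1,j,j+1\}=\varnothing$, i.e. $|k-j|\ge 3$, which is~\eqref{eq:S1Gamma-orthox}. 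For $j=k$ only the squared coefficients survive, so $(1+\beta)^2(S_1\Gamma_k,\Gamma_k)=\tfrac{\alpha^2}{4}b_k+\tfrac{(k+1)^2}{4}b_{k+1}+\tfrac{(k+\alpha-1)^2}{4}b_{k-1}$; plugging in the ratios $b_{k+1}/b_k=z(k+\alpha)/(k+1)$ and $b_{k-1}/b_k=k/\big(z(k+\alpha-1)\big)$, both read off from $b_n=C_2(\alpha,\beta)z^n\Gamma(n+\alpha)/n!$ in~\eqref{eq:SGammakx}, the right side collapses to $\tfrac{b_k}{4}\big\{\alpha^2+(k+1)(k+\alpha)z+(k+\alpha-1)k/z\big\}$, which is~\eqref{eq:S1Gammakx}. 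The one genuine obstacle is the bookkeeping in the $x\Gamma_k'$ computation — in particular verifying the cancellation that makes the diagonal coefficient equal $-\alpha/2$ rather than something $k$-dependent; everything downstream is mechanical.
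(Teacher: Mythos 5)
Your proposal is correct and follows essentially the same route as the paper: reduce $K_1\Gamma_k$ to $K(x\Gamma_k')$ via Proposition~\ref{prop:KK1x}, expand $x\Gamma_k'$ as a tridiagonal combination $-\tfrac{\alpha}{2}\Gamma_k+\tfrac{k+1}{2}\Gamma_{k+1}-\tfrac{k+\alpha-1}{2}\Gamma_{k-1}$ using the Laguerre recurrences, and then invoke the $S$-orthogonality~\eqref{eq:SGammakx}. The only cosmetic difference is that you push the tridiagonal expansion through $K$ before pairing, while the paper writes the inner product directly as $\tfrac{1}{(1+\beta)^2}(S(x\Gamma_k'),x\Gamma_j')$ — these are the same computation.
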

		\begin{proof}
			We note that~\eqref{eq:SGammakx_2} and~\eqref{eq:SGamma-orthox} have been
			established in~\eqref{eq:SGammakx} already, and \eqref{eq:gammakinf} in \eqref{eq:Glag-exp}. 

			To prove identities involving $S_1$, note that using \prettyref{prop:KK1x} we get
			\begin{equation}
			(S_1\Gamma_k,\Gamma_j) =   {1\over (1+\beta)^2}(K(x \Gamma_k'), K(x \Gamma_j'))_{L_2(f_0)} 
				=  {1\over (1+\beta)^2}(S(x \Gamma_k'), x \Gamma_j')\,.
			\label{eq:S1Gammakjx}
			\end{equation}
			Denoting for convenience $\gamma\equiv\gamma_1 = \gamma_2/2$ such that $\Gamma_k(x)=e^{-\gamma x} L_n^\nu(2\gamma x)$
			and recalling that
			$\nu=\alpha-1$ we have from the recurrence \prettyref{eq:Glag-diff}--\prettyref{eq:Glag-3recur} of generalized Laguerre polynomials:
			\begin{align*}
			x \Gamma_k'(x)
			= & ~ e^{- \gamma x} \sth{-\gamma x L_k^{\nu}( 2 \gamma x) + 2 \gamma x (L_k^{\nu})'(2\gamma x)} \\
			= & ~ e^{- \gamma
			x} \sth{- \frac{1}{2} \pth{(2k+\nu+1)L_k^{\nu}-(k+1)L_{k+1}^{\nu}-(k+\nu)L_{k-1}^{\nu}}  + (kL_{k}-(k+\nu)L_{k-1}^{\nu}) }(2\gamma x) \\
			= & ~ e^{- \gamma x} \sth{- \frac{\alpha}{2} L_k^{\nu} + \frac{k+1}{2} L_{k+1}^{\nu} -\frac{k+\nu}{2} L_{k-1}^{\nu}  }(2\gamma x) \\
			= & ~ - \frac{\alpha}{2} \Gamma_k(x) + \frac{k+1}{2} \Gamma_{k+1}(x) -\frac{k+\nu}{2} \Gamma_{k-1}(x). \numberthis \label{eq:xGammakprime}
			\end{align*}
		
			If $|k-j|\geq 3$, \prettyref{eq:S1Gamma-orthox} follows from \prettyref{eq:S1Gammakjx} and the orthogonality property \prettyref{eq:SGammakx}.
			Finally, using \prettyref{eq:xGammakprime} and \prettyref{eq:SGammakx}, \prettyref{eq:S1Gammakx} follows from
			\begin{align*}
				& ~ (S_1\Gamma_k,\Gamma_k) = {1\over (1+\beta)^2}(S(x \Gamma_k'), x \Gamma_k') \\
			= & ~ \frac{1}{4 (1+\beta)^2} \sth{\alpha^2 b_k + (k+1)^2 b_{k+1}+ (k+\alpha-1)^2 b_{k-1} }	\\
			= & ~ \frac{C_2(\alpha,\beta) z^k }{4 (1+\beta)^2} \sth{\alpha^2 \frac{\Gamma(k+\alpha)}{k!}
			+ (k+1)^2 \frac{\Gamma(k+\alpha+1)}{(k+1)!} z + (k+\alpha-1)^2 \frac{\Gamma(k+\alpha-1)}{(k-1)!} \frac{1}{z} 		}	\\
			= & ~ {b_{k} \over 4(1+\beta)^2} 
			\sth{\alpha^2 
				+ (k+1)(k+\alpha) z + (k+\alpha-1) k \frac{1}{z} }. 
			\end{align*} 
		\end{proof}

		We are now in a position to prove Lemma~\ref{lem:poisson_main}.

		\begin{proof}[Proof of Lemma~\ref{lem:poisson_main}]

		Fix $m$ and select
				$$ r_k = {1\over \sqrt{(S_1 \Gamma_k, \Gamma_k)}} \Gamma_k, \qquad k \in \calK = \{m,m+3,\ldots,4m\}\,.$$
			This ensures that conditions~\eqref{eq:lpm_1} and~\eqref{eq:lpm_2} are satisfied.
			Recall that $z = \frac{1}{(\sqrt{\beta+1}+\sqrt{\beta})^2}$ and the assumption that $\beta \geq 2$.		
			We have
			$$ m \leq k \leq 4 m, \quad \frac{1}{6\beta} \leq z \leq {1\over 4\beta} \leq \frac{1}{8}\,. $$
			Denote $\delta = \alpha/\beta$.			
			To show~\eqref{eq:lpm_3} notice that from \eqref{eq:SGammakx} and \eqref{eq:S1Gammakx} we have for any $k\in
			\calK$
			\[ 
			\|K r_k\|_{L^2(f_0)}^2 = 
			(S r_k,r_k) = 
			{(S \Gamma_k, \Gamma_k)\over (S_1 \Gamma_k, \Gamma_k)} 
			= \frac{4(1+\beta)^2}{\alpha^2 
				+ (k+1)(k+\alpha) z + (k+\alpha-1) \frac{k}{z} }
				\asymp 
				{1\over \delta m + \delta^2}\,,
				\]
			since  $\alpha^2 
					+ (k+1)(k+\alpha) z + (k+\alpha-1) k \frac{1}{z}  \asymp \alpha^2 + m \alpha \beta$, using the assumption of $m\leq \alpha$.
					
			To show \prettyref{eq:lpm_4}, we use~\eqref{eq:gammakinf} to get
			\begin{equation}\label{eq:gc_7}
					\max_{k\in\calK} \|r_k\|_\infty \lesssim \max_{k\in\calK} \sqrt{1\over \delta m  b_k} {k+\alpha\choose
				k}\,.
			\end{equation}		
			Using \eqref{eq:SGammakx} and $\gamma_2 = 2\sqrt{\beta(1+\beta)}$, we have
				\begin{align*} 
				b_k &= (1+\beta)^3(1-z) \cdot \beta^\alpha \cdot (z(1+\beta))^{\alpha -1 \over 2}  \cdot
			\gamma_2^{-\alpha-1} \cdot z^k {k+\alpha-1\choose k}\\
				& = A B^{\alpha/2} z^k {k+\alpha-1\choose k}
			\end{align*}
			where  $B \triangleq \frac{z \beta^2 (1+\beta)}{\gamma_2^2} = \frac{z\beta}{4} \geq  \frac{1}{24}$ and 
			$A \triangleq \frac{(1+\beta)^3(1-z)}{2\sqrt{z\beta}} \geq \frac{1}{2}$ for all $\beta \geq 2$. 
			Thus we get 
			$b_k \geq \frac{1}{2} 5^{-\alpha} z^k$ and (using $k\leq 4m$)
			\[
					{k+\alpha\choose k}^2 b_k^{-1} \le 2 \cdot 5^\alpha (6 \beta)^k 4^{k+\alpha} \leq \exp(C' (\alpha + m\log \beta))
			\]
			for some absolute constant $C'$.
			Thus, from~\eqref{eq:gc_7} we have shown that 
				$$\max_{k\in\calK} \|r_k\|_\infty  \le \sqrt{\frac{\beta}{\alpha}} e^{C (m \log \beta + \alpha)}\,,$$
				for some absolute constant $C$. This completes the proof of~\eqref{eq:lpm_4}.
		\end{proof}

	\begin{proof}[Proof of Lemma~\ref{lem:poisson_exp}]
	For $\alpha=1$ and fixed $\beta>0$ (not dependent on $m$) we select functions $r_j$ as in the proof
	of Lemma~\ref{lem:poisson_main}. This time, however, we have $C(\alpha,\beta) \asymp
	C_2(\alpha,\beta) \asymp z \asymp \gamma_1 \asymp \gamma_2 \asymp 1$, $b_k \asymp z^k$. This
	implies via~\eqref{eq:SGammakx_2}--\eqref{eq:S1Gammakx} that
	$$ \|K r_j\|_{L_2(f_0)}^2 = {(S\Gamma_k, \Gamma_k)\over (S_1\Gamma_k, \Gamma_k)} \asymp
	{1\over m^2}\,,$$
	since $(\alpha^2 + (k+1)(k+\alpha)z + (k+\alpha-1)k{1\over z}) \asymp m^2$. This
	proves~\eqref{eq:lpm_3e}. To show~\eqref{eq:lpm_4e} we notice that
	from~\eqref{eq:gammakinf} and $\alpha=1$:
		$$ \|r_k\|_\infty = {\|\Gamma_k\|_\infty\over \sqrt{(S_1\Gamma_k,\Gamma_k)}} 
			\lesssim {m\over \sqrt{z^{4m} m^2}} = z^{-{2m}}\,.$$ 
	Since $z < 1$ we get the estimate~\eqref{eq:lpm_4e}.
\end{proof}

\section{Regret bound for the Robbins estimator}
\label{app:robbins}

In this appendix we prove the upper bound part of \prettyref{th:poisson} for the Poisson model by analyzing 
the Robbins estimator $\tilde \theta^n$ in \prettyref{eq:robbins}.
We first consider the compact supported case and show that for any prior $G$ on $[0,h]$, 
	\begin{equation}\label{eq:poi_ub}
		\Expect_G\qth{\|\tilde \theta^n - \theta^n\|^2}
	- n \cdot \mmse(G) \le c_2 \left({\log n \over \log \log n}\right)^2\,,
\end{equation}
for some constant $c_2=c_2(h)$. 
To this end, we mostly follow the method of~\cite{BGR13} fixing a
mistake there\footnote{In the first display equation in the proof of~\cite[Theorem 1]{BGR13},
the last identity does not hold.} and adapting the proof to fixed sample size $n$ (as
opposed to $\Poi(n)$ in~\cite{BGR13}). First, we consider the case of bounded support. 
For any fixed prior $G$ and its induced
Poisson mixture $f$, recall the Bayes optimal estimator in \prettyref{eq:bayes-poi}, namely
\begin{equation}\label{eq:abc_3}
	\hat \theta_G(y) = (y+1){f(y+1)\over f(y)}\,.
\end{equation}
Then the total regret of Robbins' estimator under the prior $G$ is given by
\begin{align*} 
R(G,\tilde \theta^n) 
&\triangleq \sum_{i=1}^n \pth{\Expect[(\tilde \theta_i-\theta_i)^2] - \Expect[(\hat \theta_G(Y_i)-\theta_i)^2]} \stepa{=} \sum_{i=1}^n \Expect[(\tilde \theta_i-  \hat \theta_G(Y_i))^2]  \\
&\stepb{=} \sum_{y=0}^\infty \EE\left[N(y) (y+1)^2\left( {N(y+1)\over N(y)} -
{f(y+1)\over f(y)}\right)^2 1\{N(y)>0\} \right]\\
&= \sum_{y=0}^\infty \EE\left[ {1\{N(y)>0\}(y+1)^2\over N(y)} \left( N(y+1) -
{f(y+1) N(y)\over f(y)}\right)^2  \right]
\end{align*}
where (a) follows from $\Expect[\theta_i|Y_i]=\hat\theta_G(Y_i)$;
in (b) we recall the definition of $N(y) = \sum_{i=1}^n \indc{Y_i=y}$ from \prettyref{eq:robbins}.

Conditioning on $N(y)$ we have that $N(y+1) \sim \Bino(n-N(y), {f(y+1)\over 1-f(y)})$.
Thus, by the (conditional) bias-variance decomposition we have (denoting 
temporarily $q \triangleq {f(y+1)\over 1-f(y)}$):
\begin{align*} 
&~ \EE\left[\left( N(y+1) - {f(y+1)N(y)\over f(y)}\right)^2 \middle | N(y)\right] 
=(n-N(y))q(1-q) + \pth{(n-N(y))q - {f(y+1)\over f(y)} N(y)}^2 \\
	\le &~ n {f(y+1)\over 1-f(y)} + \left({f(y+1)\over (1-f(y))f(y)}\right)^2
	(nf(y)-N(y))^2\,.
\end{align*}
We now notice a simple fact about the Poisson mixture density $f(y)$:  Since for every $x>0$, ${x^y e^{-x}\over y!} \le {y^y
e^{-y}\over y!} \le {1\over \sqrt{2\pi y}}$ (Stirling's), we have
$$ f(y) \le {1\over \sqrt{2\pi y}}\, \qquad \forall y>0\,.$$
Thus, in all terms except $y=0$ we have ${1\over 1-f(y)} \lesssim 1$ and 
we can write
\begin{equation}\label{eq:abc_2}
	R(G,\tilde \theta^n) \lesssim I_0 + \sum_{y=1}^\infty I_1(y) + I_2(y) 
\end{equation}
where
\begin{align*} I_0 &\triangleq {f(1)\over 1-f(0)} n v_1(n,f(0)) + \left({f(1)\over (1-f(0))f(0)}\right)^2
	v_2(n,f(0))\\
   I_1(y) &\triangleq (y+1)^2 n f(y+1) v_1(n, f(y))\\
   I_2(y) &\triangleq \left((y+1){f(y+1)\over f(y)}\right)^2  v_2(n, f(y))\,, 
\end{align*}
and the two functions $v_1,v_2$ are defined in the next lemma (whose proof is given at the end of this appendix):
\begin{lemma}\label{lem:v1v2} Let $B\sim\Bino(n,p)$ and define 
	$$ v_1(n,p) \triangleq \EE[B^{-1}1\{B>0\}]\,, \quad v_2(n,p) \triangleq \EE[B^{-1}1\{B>0\} (B-np)^2]\,.$$
	There exist absolute constants $c_1,c_2>0$ such that for all $n\ge1,p\in[0,1]$ we have
	$$ v_1(n,p) \le c_1 \min\pth{np, {1\over np}}\le c_1, \quad v_2(n,p) \le c_2 \min(1, np) \le
	c_2\,.$$
\end{lemma}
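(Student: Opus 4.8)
The plan is to treat $v_1$ and $v_2$ separately, relying throughout on the elementary pointwise inequality $\frac1k\mathbf{1}\{k\ge1\}\le\frac2{k+1}$ (valid for every integer $k\ge0$), the variance formula $\mathbb{E}[(B-np)^2]=np(1-p)$, Markov's inequality, and the binomial shift identity $\frac1{k+1}\binom nk=\frac1{n+1}\binom{n+1}{k+1}$.

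First I would bound $v_1$. The shift identity gives $\mathbb{E}\big[\tfrac1{B+1}\big]=\frac{1-(1-p)^{n+1}}{(n+1)p}\le\frac1{np}$, hence $v_1(n,p)\le2\,\mathbb{E}\big[\tfrac1{B+1}\big]\le\frac2{np}$. On the other hand $v_1(n,p)\le\mathbb{E}[\mathbf{1}\{B\ge1\}]=\mathbb{P}[B\ge1]\le\mathbb{E}[B]=np$. Taking the better of the two, $v_1(n,p)\le\min\!\big(np,\tfrac2{np}\big)\le2\min\!\big(np,\tfrac1{np}\big)$, which is the claimed bound with $c_1=2$, and in particular $v_1\le2$.

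For $v_2$ I would prove two estimates and then take their minimum. Estimate one ($v_2\le3np$): on $\{B\ge1\}$ one has $\frac{(B-np)^2}{B}=B-2np+\frac{(np)^2}{B}$, so taking expectations $v_2(n,p)=np-2np\,\mathbb{P}[B\ge1]+(np)^2v_1(n,p)\le np+(np)^2v_1(n,p)\le np+2np=3np$, using $v_1\le2/(np)$. Estimate two (uniform boundedness): writing $\mu:=np$, split the expectation defining $v_2$ over $\{1\le B<\mu/2\}$ and $\{B\ge\mu/2\}$. On the second event $B^{-1}\le2/\mu$, so that contribution is at most $\frac2\mu\,\mathbb{E}[(B-\mu)^2]=2(1-p)\le2$. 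On the first event $B^{-1}\le1$ and $(B-\mu)^2<\mu^2$, so that contribution is at most $\mu^2\,\mathbb{P}[B\le\mu/2]$; the multiplicative Chernoff bound $\mathbb{P}[B\le(1-\delta)\mu]\le e^{-\mu\delta^2/2}$ with $\delta=\tfrac12$ gives $\mathbb{P}[B\le\mu/2]\le e^{-\mu/8}$, and since $\sup_{t\ge0}t^2e^{-t/8}=256e^{-2}<35$ this is bounded by an absolute constant. Hence $v_2(n,p)\le37$ for all $n\ge1$, $p\in[0,1]$. Combining, $v_2(n,p)\le\min(3np,37)\le37\min(1,np)$, the claimed bound with $c_2=37$.

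None of the steps is a serious obstacle: everything reduces to a couple of standard binomial identities, Markov's inequality, the second-moment formula, and a textbook Chernoff bound. The one place requiring a little care is the uniform bound on $v_2$, where the $B^{-1}$ weight must be kept from blowing up when $B$ is atypically small; the Chernoff estimate on $\{B\le\mu/2\}$ supplies exactly the exponential decay needed to kill the $\mu^2$ prefactor, and it is worth noting that $\mathbb{P}[B\le\mu/2]\le e^{-\mu/8}$ holds for every $\mu>0$, so the same split also covers the small-mean regime $\mu\le2$ (where $\{1\le B<\mu/2\}$ is empty and only the $\{B\ge\mu/2\}$ contribution survives).
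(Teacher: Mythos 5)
Your proof is correct. For $v_1$ it is essentially the same as the paper's: the pointwise bound $B^{-1}\mathbf{1}\{B>0\}\le 2/(B+1)$ plus the binomial shift identity gives the $1/(np)$ estimate, and $v_1\le\Prob[B\ge1]\le np$ gives the other (your Markov step here is in fact cleaner than the paper's Poisson comparison). For $v_2$, however, you take a genuinely different route. The paper's argument stays purely algebraic: it writes $(B-np)^2\le 2\bigl(B+1-(n+1)p\bigr)^2+2(1-p)^2$, applies $B^{-1}\mathbf{1}\{B>0\}\le 2/(B+1)$, and uses the same index-shift trick to recognize $\EE\bigl[(B+1)^{-1}\mathbf{1}\{B>0\}\bigl((n+1)p-(B+1)\bigr)^2\bigr]$ as $\frac{1}{(n+1)p}\EE\bigl[\mathbf{1}\{2\le\tilde B\le n+1\}((n+1)p-\tilde B)^2\bigr]\le\Var(\tilde B)/((n+1)p)=1-p$ with $\tilde B\sim\Bino(n+1,p)$, and separately bounds $v_2\le\EE[B\mathbf{1}\{B>0\}]=np$ when $np<1$ via $(B-np)^2\le B^2$. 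You instead derive $v_2\le 3np$ from the algebraic identity $(B-np)^2/B=B-2np+(np)^2/B$ (which neatly reuses your $v_1$ bound), and for the uniform bound you split on $\{B\ge\mu/2\}$ versus $\{1\le B<\mu/2\}$ and invoke a multiplicative Chernoff bound to control the $\mu^2$ prefactor on the lower-tail event. The paper's route is slicker and gives a smaller constant (avoiding any concentration inequality, just second moments and the shift identity applied twice); your route is more hands-on and perhaps more transportable to settings where a closed-form shift identity is unavailable, at the price of a larger explicit constant. Both are complete and correct.
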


Assuming this lemma, we proceed to bounding the terms in~\eqref{eq:abc_2}. 
For $I_0$, 
we obtain by bounding $v_1(n, f(0)) \lesssim {1\over n f(0)}$ and
$v_2 \lesssim 1$ that
\begin{equation}\label{eq:abc_4}
	I_0 \lesssim {f(1)\over (1-f(0))f(0)} +  \left({f(1)\over (1-f(0))f(0)}\right)^2\,.
\end{equation}
Now, notice that $f(1) \le 1-f(0)$ and thus ${f(1)\over (1-f(0))f(0)} \le {1\over f(0)}$. To get
another bound, we need to make a key observation: 
for every $y\geq0$, $\hat \theta_G(y) = \EE_G[\theta|Y=y]$ defined in~\eqref{eq:abc_3} 
 belongs to the interval $[0,h]$ (since $G$ is supported on $[0,h]$); in other words,
\begin{equation}\label{eq:abc_5}
	(y+1) {f(y+1)\over f(y)} \le h \qquad \forall y\ge 0\,.
\end{equation}
In particular, applying this with $y=0$ to bound ${f(1)\over f(0)} \le h$ we get, overall, that
$$ {f(1)\over (1-f(0))f(0)} \le \min\left({1\over f(0)}, {h\over 1-f(0)}\right) \lesssim \max(1,h)\,.$$
From~\eqref{eq:abc_4}, thus, we get 
\begin{equation}\label{eq:abc_7}
	I_0 \lesssim \max(1,h)\,.
\end{equation}

Next, we bound $I_1(y)$ via \prettyref{lem:v1v2} and~\eqref{eq:abc_5} as
\begin{align} I_1(y) &\lesssim (y+1)^2 n f(y+1) \min\pth{nf(y), {1\over nf(y)}}\nonumber\\
	&= (y+1)\left( (y+1) {f(y+1)\over
f(y)}\right) \min\pth{(nf(y))^2, 1} \le h(y+1) \min\pth{1, (nf(y))^2}\,.\label{eq:abc_6}
\end{align}

To proceed, we need another lemma (also proved at the end of this appendix):
\begin{lemma}\label{lem:sumfy_bd}
Let $f$ be a Poisson mixture with mixing distribution $G$.
\begin{itemize}
	\item Suppose $G$ is supported on $[0,h]$. Then for
some constants $c = c(h)$ and $y_0 = \max(2h, c {\log n \over \log \log n})$,
\begin{align} \sum_{y\ge y_0} f(y) &\le {1\over n}\,,\label{eq:sf_1}\\
		\sum_{y> y_0} f(y)^2 (y+1) & \le {4h\over n^2}  \label{eq:sf_2}
	\end{align}
	\item Suppose 
	$G$ satisfies $G[X>x] \le ae^{-bx}$ for all $x>0$. Then
	for some constant $c_2=c_2(a,b)$ and $y_1 = c_2 \log n$,
\begin{align} \sum_{y\ge y_1} f(y) &\le {1\over n}\,,\label{eq:sf_1b}\\
		\sum_{y> y_1} f(y)^2 (y+1) & \le {1\over n^2}  \label{eq:sf_2b}
	\end{align}
\end{itemize}
\end{lemma}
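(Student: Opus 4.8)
The idea is to bound the mixture PMF $f(y)=\int\Poi(\theta)(y)\,G(d\theta)$ by a pointwise, super-polynomially decaying function of $y$, and then sum the two series as geometric-type tails, choosing the truncation level (the constant $c$, resp.\ $c_2$) large enough.

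\emph{Compactly supported case.} Since $e^{-\theta}\le 1$ and $\theta\le h$ we get the crude bound $f(y)\le h^y/y!\le(eh/y)^y$ (using $y!\ge(y/e)^y$), and $y\mapsto(eh/y)^y$ is decreasing for $y>h$. Hence for any integer $y_0\ge 2h$ (round $y_0$ up if necessary) both $h^y/y!$ and $(y+1)(h^y/y!)^2$ have successive ratios at most $1/2$ on $\{y\ge y_0\}$, so that $\sum_{y\ge y_0}f(y)\le 2(eh/y_0)^{y_0}$ and $\sum_{y>y_0}(y+1)f(y)^2\le (y_0+2)(eh/y_0)^{2y_0}$. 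It then suffices to arrange $(eh/y_0)^{y_0}\le n^{-2}$: the first bound becomes $\le 1/n$ and the second $\le(y_0+2)n^{-4}\le 4h/n^2$ for $n$ large. Since $(eh/y_0)^{y_0}=\exp(-y_0\log(y_0/(eh)))$ and, for $y_0=\max(2h,c\log n/\log\log n)$ and $n\ge n_0(h)$, one has $\log(y_0/(eh))\ge\tfrac14\log\log n$, the requirement reduces to $y_0\log(y_0/(eh))\ge\tfrac c4\log n\ge 2\log n$, which holds once $c\ge 8$. This yields \eqref{eq:sf_1}--\eqref{eq:sf_2}.

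\emph{Subexponential case.} Here I would apply a Chernoff bound to the mixture directly. With $t_0\triangleq\log(1+b/2)$ (so $e^{t_0}-1=b/2<b$) and the layer-cake identity $\Expect[e^{s\theta}]=1+s\int_0^\infty e^{sx}G[X>x]\,dx$ for nonnegative $\theta$, the hypothesis $G[X>x]\le ae^{-bx}$ gives $\Expect[e^{t_0Y}]=\Expect_{\theta\sim G}[e^{(b/2)\theta}]\le 1+a=:M$, whence $f(y)\le\Prob[Y\ge y]\le Me^{-t_0y}$ for all $y$. Both sums are then plain geometric tails: $\sum_{y\ge y_1}f(y)\le C(b)\,Me^{-t_0y_1}$ and $\sum_{y>y_1}(y+1)f(y)^2\le C(b)\,M^2(y_1+2)e^{-2t_0y_1}$. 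Taking $y_1=c_2\log n$ with $c_2=c_2(a,b)$ large enough that $t_0c_2\ge 2$ makes $e^{-t_0y_1}\le n^{-2}$, so the first sum is $\le 1/n$ and, after absorbing the polynomial factor $y_1+2$, the second is $\le 1/n^2$, both for $n\ge n_0(a,b)$. This is \eqref{eq:sf_1b}--\eqref{eq:sf_2b}.

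\emph{Main obstacle.} The delicate point is the second sum: the obvious estimate $\sum_{y>y_0}(y+1)f(y)^2\le\big(\sup_{y>y_0}(y+1)f(y)\big)\sum_{y>y_0}f(y)$ loses a factor $y_0\asymp\log n/\log\log n$, so one cannot simply reuse a pointwise bound of the shape $f(y_0)\lesssim 1/n$; the truncation level must be pushed up (larger $c$) so that the density bound at $y_0$ is already $\lesssim n^{-2}$ rather than $\lesssim n^{-1}$. This is affordable precisely because a Poisson mixture with bounded (or light-tailed) mean has tails decaying faster than any polynomial once $y_0\to\infty$, and all the asymptotics ultimately come from the relation $y_0\log y_0\asymp\log n$ when $y_0\asymp\log n/\log\log n$. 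Everything else is routine bookkeeping with Stirling's formula and geometric series.
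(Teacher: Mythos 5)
Your proof is correct and follows essentially the same strategy as the paper: establish a pointwise, geometrically (or super-geometrically) decaying upper bound on the mixture mass $f(y)$, then sum the two tails. Two cosmetic differences: in the compactly supported case the paper works with $\bar f(y)=h^ye^{-h}/y!$ and uses the recursion $y\bar f(y)=h\bar f(y-1)$ to pull out the factor $h$ in the second sum, so it only needs $\bar f(y_0)\le 1/n$, whereas you absorb the extra $y_0+2$ factor by pushing the truncation to the level where $(eh/y_0)^{y_0}\le n^{-2}$ (your "main obstacle" remark) — both work, yours just costs a larger $c$; and in the subexponential case the paper obtains $f(y)\le a(1+b)^{-y}$ by integration by parts against $\bar F$, while you get the comparable geometric bound $f(y)\le(1+a)(1+b/2)^{-y}$ via an MGF/Chernoff argument on the mixed Poisson variable. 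The Chernoff route is a perfectly natural alternative and arguably more transparent, though slightly lossier in the base of the geometric decay; it also requires you to justify the layer-cake identity, which you do correctly.
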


Thus, choosing $y_0$ as above we can sum~\eqref{eq:abc_6} to get (in the sequel all constants depend on $h$) 
\begin{align} \sum_{y\ge 1} I_1(y) & \lesssim \sum_{y\le y_0} (y+1) + n^2 \sum_{y> y_0} f(y)^2 (y+1) \nonumber \\
			& \lesssim y_0^2 + 1 \asymp \left({\log n \over \log \log n}\right)^2\,. \label{eq:I1final}
\end{align}
Similarly, we have
\begin{align} \sum_{y\ge 1} I_2(y) &\stepa{\le}h^2 \sum_{y\ge 1} v_2(n, f(y)) \nonumber\\ 
			&\stepb{\lesssim} h^2 \sum_{y\ge 1} \min(1, nf(y))\label{eq:abc_8}\\
			&\stepc{\le} \sum_{y<y_0} 1 + n \sum_{y\ge y_0} f(y) \le y_0 + 1 \asymp
			{\log n \over \log \log n}\,, \label{eq:I2final}
\end{align}			
where (a) is by~\eqref{eq:abc_5}, (b) is Lemma~\ref{lem:v1v2}, (c) is by taking $y_0$ as in
Lemma~\ref{lem:sumfy_bd} and applying~\eqref{eq:sf_2}.

Substituting \prettyref{eq:abc_7},  \prettyref{eq:I1final}, and \prettyref{eq:I2final} into \prettyref{eq:abc_2}, we obtain
$$ R(G, \tilde \theta^n) \le I_0 + \sum_{y\geq 1} I_1(y) + I_2(y) \lesssim \left({\log n \over \log \log
n}\right)^2\,,$$
completing the proof of~\eqref{eq:poi_ub}.

Next, we proceed to the subexponential case. Fix a prior $G \in \SubE(s)$, namely, 
$G((t,\infty)) \leq 2e^{-t/s}$ for all $t>0$. 
In the rest of the proof all constants depend on $s$.
We first show that it is possible to
replace $G$ with a truncated prior $G'[X\in \cdot] = G[X\in \cdot | X\le c \log n]$ for a
suitably large $c=c(s)>0$. Indeed, observe that the Robbins estimator for any coordinate $j$ satisfies:
\begin{equation}\label{eq:rb_1}
	\tilde \theta_j = (Y_j+1) {N(Y_j+1)\over N(Y_j)} \le (Y_j+1) n\,.
\end{equation}
This is because, crucially, $N(Y_j)\geq 1$ by definition.
Since 
$\Expect[Y_j^4] = \int G(d\theta) (\theta ^4+6 \theta ^3+7 \theta ^2+\theta) \lesssim 1$, 
we have 
\begin{equation}
\EE[\tilde \theta_j^4] \leq n^4 \Expect[(Y_j+1)^4] \lesssim n^4.
\label{eq:robbins-fourthmoment}
\end{equation} 
Next, since the $G$ is subexponential, we
can select $c$ so large that 
$$ \epsilon = G[X>c \log n]  \le {1\over n^{10}}\,.$$

Now, denote as in the proof of Lemma~\ref{lmm:truncation} the event $E = \{\theta_j \le c \log
n, j=1,\ldots,n\}$. Recall that $\EE_G$ denotes expectation with respect to $\theta_i \simiid G$ and $Y_i \sim \Poi(\theta_i)$.
Then 
\begin{align} {1\over n} R(G, \tilde \theta^n) &=\EE_G[(\tilde \theta_1 - \theta_1)^2] -
\mmse(G)\nonumber\\
	&\le \EE_G[(\tilde \theta_1 - \theta_1)^2 | E] - \mmse(G') + \mmse(G')-\mmse(G) +
	\EE_G [(\tilde \theta_1 - \theta_1)^2 1_{E^c}]\nonumber\\
	& = {1\over n} R(G', \tilde \theta^n) + \mmse(G')-\mmse(G) +
	\EE_G [(\tilde \theta_1 - \theta_1)^2 1_{E^c}]\,, \label{eq:rb_2}
\end{align}
where the first identity follows from symmetry.
For the last term in~\eqref{eq:rb_2} we have from Cauchy-Schwarz and $\PP[E^c] \le n \epsilon$:
	$$ \EE_G [(\tilde \theta_1 - \theta_1)^2 1_{E^c}] \le \sqrt{n\epsilon} \sqrt{\EE_G [(\tilde
	\theta_1 - \theta_1)^4} \lesssim \sqrt{n\epsilon} n^2\,,$$
where in the last step we used~\eqref{eq:robbins-fourthmoment} and $\EE_G[\theta_1^4] \lesssim 1$ (since $G$ is
subexponential).

For the second term, as in \eqref{eq:mmse-truncate} we use $\mmse(G') \le {1\over 1-\epsilon}
\mmse(G)$ so that
	$$ \mmse(G') - \mmse(G) \le {\epsilon\over 1-\epsilon} \mmse(G) \lesssim \epsilon
	\,.$$
Altogether, we get from~\eqref{eq:rb_2} that 
\begin{align*}  R(G, \tilde \theta^n) &\le R(G', \tilde \theta^n) + O(1/n)\,,
\end{align*}
Thus, it is sufficient to analyze the regret of Robbins under the prior $G'$. Note that $G'$ satisfies the assumptions in the
second part of Lemma~\ref{lem:sumfy_bd} with constants $a,b$ depending on $s$ only. According to~\eqref{eq:abc_2} we need to bound $I_0$,
$\sum_{y\ge 1} I_1(y)$ and $\sum_{y\ge 2} I_2(y)$. We set $h=c\log n$ (which is the support of
$G'$) and get from~\eqref{eq:abc_7} the bound $I_0 \lesssim \log n$. 
For $I_1$ we have from~\eqref{eq:abc_6} the following estimate
$$ \sum_{y\ge 1} I_1(y) \lesssim h \sum_{y\ge 1} (y+1)  \min(1, nf(y))^2 \le h \left(\sum_{y=1}^{y_1} (y+1)
+ n^2 \sum_{y > y_1} (y+1) f^2(y)\right)\,,$$
where $y_1\asymp \log n$ is defined in Lemma~\ref{lem:sumfy_bd}. Then the first sum is proportional to $\log^2 n$ and the
second, in view of ~\eqref{eq:sf_2b}, is at most ${1\over n^2}$. Overall we get
$$ \sum_{y\ge 1} I_1(y) \lesssim h (y_1^2 + 1) \asymp \log^3 n\,.$$
Similarly, from~\eqref{eq:abc_8} we have 
$$ \sum_{y \ge 1} I_2(y) \lesssim h^2 \sum_{y\ge 1} \min(1, nf(y)) \le h^2\left( \sum_{y\le y_1} 1 +
n \sum_{y>y_1}f(y)\right)\,.$$
The first sum equals $y_1$ and the second is bounded by ${1\over n}$ (via~\eqref{eq:sf_1b}), so we
get
$$ \sum_{y \ge 1} I_2(y) \lesssim h^2 (y_1 + 1) \asymp \log^3 n\,.$$
Altogether, we obtain the claimed $O(\log^3 n)$ regret bound for the subexponential class.

\begin{remark} In the analysis of Robbins estimator, if we denote by $y$ the effective support of
the Poisson mixture (i.e.~the $1-{1\over n^{10}}$ quantile) and by $h$ the effective
support of the mixing distribution, then the total regret is at most $O(h y (h+y))$. In the compact
support case $h\asymp 1$ and $y \asymp {\log n \over \log \log n}$, while in the subexponential
case $h\asymp y \asymp \log n$.
This explains the rates in \prettyref{eq:poi_lb} and \prettyref{eq:poi_lb2}, respectively.
\end{remark}

\begin{proof}[Proof of Lemma~\ref{lem:v1v2}]
	Note that $B^{-1}1\{B>0\} \le {2\over B+1}1\{B>0\}$. Furthermore, we have
	\begin{align*} \EE[(B+1)^{-1} 1\{B>0\}] &= \sum_{k=1}^n {n!\over (k+1)! (n-k)!} p^k (1-p)^{n-k} \\
			&= {1\over p(n+1)} \sum_{k=1}^n {n+1\choose k+1} p^{k+1} (1-p)^{n-k}\\
			&\le {1\over pn} \PP[\Bino(n+1,p) \ge 2] \leq \frac{1}{np}\,.
	\end{align*}
	For the case of $np\leq1$, 
		$$ 
		\Expect[B^{-1}1\{B>0\}]\leq \Prob[B\geq 1] \leq \sum_{k=1}^n \frac{(np)^k}{k!} \leq np  \cdot \Prob[\Poi(np)\geq 0] \leq np.$$
	Overall, we have
	\begin{equation}\label{eq:abc_1}
			\EE[(B+1)^{-1} 1\{B>0\}] \le \min\pth{np, {1\over np}}\,,
	\end{equation}		
	and the desired bound $v_1(n,p) \equiv \Expect[B^{-1}1\{B>0\}] \leq 2 \min(np,\frac{1}{np})$.

	For $v_2$, we again bound $B^{-1}1\{B>0\} \le {2\over B+1}1\{B>0\}$, and also notice that
	$(B-np)^2 =  (B+1 - (n+1)p - (1-p))^2 \le 2 (B+1 - (n+1)p)^2 + 2 (1-p)^2$. This implies 
	\begin{align*} v_2(n,p) &\le 2\EE[(B+1)^{-1} 1\{B>0\} \{((n+1)p - (B+1))^2 + (1-p)^2\}]
	\end{align*}
	Notice that by~\eqref{eq:abc_1} expectation of the second quadratic term can be bounded as
	$\lesssim (1-p)^2$. Thus, we focus on the first term, for which we have the following
	expression in terms of $\tilde B \sim \Bino(n+1,p)$:
	\begin{align*} 
	& \lefteqn{\EE[(B+1)^{-1} 1\{B>0\} \{((n+1)p - (B+1))^2}\\
	&= \sum_{k=1}^n {n!\over (k+1)! (n-k)!} p^k (1-p)^{n-k} (k+1 -
	(n+1)p)^2 \\
		&= {1\over (n+1)p} \EE[1\{2\leq\tilde B\le n+1\} ((n+1)p-\tilde B)^2]\\
		&\le {1\over (n+1)p} \Var[\tilde B] = 1-p
	\end{align*}
	This proves $v_2(n,p) \le 2$. On the other hand, suppose $np<1$. Then for any $B\ge 1$
	we have $(B-np)^2 \le B^2$ and, consequently, 
	$$  v_2(n,p) \le \EE[1\{B>0\} B] \le np\,.$$
	And, hence, we have $v_2 \le 2\min(1,np)$.
\end{proof}

\begin{proof}[Proof of Lemma~\ref{lem:sumfy_bd}]
First, let us consider the compactly supported case.
	For any $y>h>x$ we have ${x^y e^{-x}\over y!} \le \bar f(y) \eqdef {h^y e^{-h}\over y!}$, implying that
	$f(y) = \EE_{X\lambda_2}[{X^y e^{-X}\over y!}] \le \bar f(y)$. 
	Note that $\bar f(y)$ is monotonically decreasing in $y$ for $y>h$. Furthermore, we have
	for any $y_0 \geq 2h$ that 
	\begin{align} 
	\sum_{y\ge y_0} \bar f(y) &= {h^{y_0} e^{-h}\over y_0!}
		\sum_{m\ge 0} {y_0!\over (y_0+m)!} h^m\nonumber\\
		&\le \bar f(y_0) \sum_{m\ge 0} \left({h\over y_0}\right)^m \le 2 \bar
		f(y_0).\label{eq:sff_0}
	\end{align}
	Furthermore, we have 
	\begin{align*} \sum_{y > y_0} \bar f(y)^2 (y+1) &\stepa{\le} 2 \sum_{y > y_0} y \bar f(y)^2\\
			&\stepb{=} 2h \sum_{y > y_0} \bar f(y-1) \bar f(y)^2\\
			&\stepc{\le} 2h \bar f(y_0) \sum_{y > y_0} \bar f(y)\\
			&\stepd{\le} 4h \bar f(y_0)^2\,, 
	\end{align*}
	where in (a) we used the fact that $y\geq 1$, in (b) the identity $y\bar f(y) = h \bar
	f(y-1)$, in (c) the monotonicity of $\bar f$, and in (d) we applied~\eqref{eq:sff_0}.
	Thus, selecting $y_0 > c(h) {\log n \over \log \log n}$ yields $\bar f(y_0)  \le {1\over
	n}$ and completes the proof of~\eqref{eq:sf_1} and~\eqref{eq:sf_2}.

For the subexponential case, let us denote by $\bar F(x) = G[X>x]$ and recall $\bar F(x) \le a  e^{-bx}$. Then from integration by parts we have
\begin{align*} f(y) = \EE_{X\lambda_2}\left[{X^y e^{-X}\over y!}\right] &= {1\over y!}\int_0^\infty  (y-x)x^{y-1} e^{-x}
\bar F(x) dx\\
	&\le {1\over y!}\int_0^\infty  y x^{y-1} e^{-x}
\bar F(x) dx\\
	&\le {a\over (y-1)!}\int_0^\infty  x^{y-1} e^{-(1+b)x} dx\\
	&= a (1+b)^{-y} \triangleq \tilde f(y)\,.
\end{align*}
The estimates~\eqref{eq:sf_1b} and~\eqref{eq:sf_2b} then easily follow from the properties of
geometric distribution $\tilde f$.
\end{proof}

\section{Proofs for the compound setting}
\label{app:comp}

\begin{proof}[Proof of \prettyref{prop:regret-comp}]
	As noted in \cite{greenshtein2009asymptotic}, the first inequality of \prettyref{eq:cc_1} follows from $R_{\oracle}(G_{\theta^n}) \le n\mmse(G_{\theta^n})$ since $f(Y^n) = (f_1(Y_1),\ldots,f_1(Y_n))$ is permutation-invariant. 
For the second, by the concavity of $G\mapsto\mmse(G)$ (see \cite[Corollary 1]{mmse.functional.IT}), we have
\begin{equation}
\EE_{\theta^n \simiid G} \mmse(G_{\theta^n}) \le \mmse(G)\,,
\label{eq:mmse-emp}
\end{equation}
and, therefore, 
$$ \EE_G[\|\theta^n - \hat \theta^n(Y^n)\|^2] - n \mmse(G) \le \EE_G[\|\theta^n - \hat
\theta^n(Y^n)\|^2 - n \mmse(G_{\theta^n})].
$$
Taking $\inf_{\hat \theta^n} \sup_G$ on both sides proves the second inequality in~\eqref{eq:cc_1}.
\end{proof}

\begin{proof}[Proof of \prettyref{prop:comp-subg}]
	The proof follows the proof strategy for the second inequality in \prettyref{eq:cc_1} and the fact that the empirical distribution drawn from a subgaussian distribution is subgaussian (with multiplicative adjustment of constants)	with probability $1-1/\poly(n)$.

	Fix $s>0$ and $G\in \SubG(s)$. 
	Let $\theta_1,\ldots,\theta_n \iiddistr G$ and denote the empirical distribution $G_{\theta^n} = \frac{1}{n}\sum_{i=1}^n \delta_{\theta_i}$.
	We first show that there exists an absolute constant $C_0$ such that 
	\begin{equation}
	\prob{G_{\theta^n} \in \SubG(C_0 s)} \geq 1 - n^{-4}.
	\label{eq:emp-subg}
	\end{equation}
	Indeed, by the equivalent characterization of subgaussian constant (see \cite[Proposition 2.5.2]{vershynin2018high}), it suffices to show 
	\begin{equation}
	\prob{\frac{1}{n} \sum_{i=1}^n \exp\pth{\frac{\theta_i^2}{C_1 s}} > 2} \leq n^{-4}.
	\label{eq:emp-subg1}
	\end{equation}
	for some absolute constant $C_1$. Since $\prob{|\theta_i| \geq t} \leq 2 e^{-t^2/(2s)}$ by assumption, there exists 
	$C_1$ such that iid random variables $X_i \eqdef\exp\pth{\frac{\theta_i^2}{C_1 s}}$
	satisfy $\expect{X} \leq 1.1$ and $\expect{X_i^8} \leq 1.2$. From Rosenthal's
	inequality~\cite[Theorem 3]{rosenthal1970subspaces} we obtain $\EE[\left(\sum_i X_i-\EE[X]\right)^8] \lesssim
	n^{4}$, and then from Markov's inequality $\PP[{1\over n}\sum_i X_i > 2] \lesssim
	n^{-4}$, concluding the proof of~\prettyref{eq:emp-subg1}.
	
	Next recall that for a given class of priors $\calG$, the total regret in the compound setting is defined as
	\[
\TotRegretComp_n(\calG) \eqdef \inf_{\hat \theta^n} \sup_{\theta^n \in \Theta(\calG)} \sth{ \EE_{\theta^n}[\|\hat
\theta^n(Y^n) - \theta^n\|^2] - n \cdot \mmse(G_{\theta^n})}\,,
\]
where $\Theta(\calG) \triangleq \{\theta^n: G_{\theta^n} \in \calG\}$.
A simple observation is that for $\TotRegretComp_n(\SubG(s))$, it is sufficient to restrict to $\hat \theta^n = \hat \theta^n(Y^n)$ such that $\|\hat \theta^n\| \leq \sqrt{C_2 s n}$ deterministically for some absolute constant $C_2$. This simply follows from the fact that for any $\theta^n\in\Theta(\SubG(s))$, $\|\theta^n\|\leq \sqrt{C_2 s n}$, so that for any estimator $\hat \theta^n$, the modification 
$\tilde \theta^n \triangleq \argmin\{ \|\hat \theta^n - x\|: \|x\|\leq \sqrt{C_2 s n}\}$ improves $\hat\theta^n$ \emph{pointwise}, in the sense that $\|\tilde \theta^n -\theta^n\| \leq \|\hat\theta^n -\theta^n\|$ for every  $\theta^n\in\Theta(\SubG(s))$.

Finally, we show
\begin{equation}\label{eq:ebrt2}
\TotRegretComp_n(\SubG(C_0 s)) \geq \TotRegret_n(\SubG(s))  - O(s n^{-1})
\end{equation}
To this end, fix any estimator $\hat\theta^n$ such that  $\|\hat \theta^n\| \leq \sqrt{C_2 C_0 s n}$. 
For any $G \in \SubG(s)$, let $\theta_1,\ldots,\theta_n\iiddistr G$. Then
\begin{align}
& ~  \sup_{\theta^n \in \Theta(\SubG(C_0 s))} \sth{ \EE_{\theta^n}[\|\hat
\theta^n(Y^n) - \theta^n\|^2] - n \cdot \mmse(G_{\theta^n})} \nonumber\\
\geq & ~  \EE\qth{\|\hat \theta^n - \theta^n\|^2 - n \cdot \mmse(G_{\theta^n}) \mid G_{\theta^n}
\in \SubG(C_0 s) } \nonumber\\
\geq & ~ \EE[\|\hat \theta^n - \theta^n\|^2 - n \cdot \mmse(G_{\theta^n})] - \EE[\|\hat \theta^n -
\theta^n\|^2 \indc{ G_{\theta^n} \notin \SubG(C_0 s)} ] \nonumber\\
\stepa{\geq} & ~ \EE[\|\hat \theta^n - \theta^n\|^2] - n \cdot \mmse(G) - \sqrt{\EE[\|\hat
\theta^n - \theta^n\|^4] \prob{ G_{\theta^n} \notin \SubG(C_0 s)}}  \nonumber\\
\stepb{\geq} & ~ \EE[\|\hat \theta^n - \theta^n\|^2] - n \cdot \mmse(G) - O({s\over
n}),\label{eq:ebrt}
\end{align}
where in (a) we apply \prettyref{eq:mmse-emp} again for the first term and Cauchy-Schwarz for the second; 
(b) follows from \prettyref{eq:emp-subg} and the following bounds
\begin{align*} \EE[\|\hat \theta^n - \theta^n\|^4] &\le 8 (\EE[\|\hat \theta^n\|^4] + \EE[\|\theta^n\|^4]\\
		&\stepc{\le} 8((C_2C_0sn)^2 + n^2 \EE^2[\theta^2] + n \Var[\theta^2])\\
		&\lesssim n^2s^2
\end{align*}
and in (c) we used the fact that $\|\hat\theta^n\|\le \sqrt{C_2C_0 sn }$.
Taking the infimum on both sides of~\eqref{eq:ebrt} over $\hat \theta^n$ subject to the norm
constraint followed by a supremum over $G \in \SubG(s)$ over the right side, we complete the proof
of~\eqref{eq:ebrt2}. Since $\TotRegret(\SubG(c_0s))$ is increasing with $c_0$, the
statement~\eqref{eq:ebrt2} is equivalent to that of the Theorem upon taking $c_0={1\over KC_0}$
with a suitably large $K$.
\end{proof}

\section{Results on density estimation}
\label{app:density}

The construction in the present paper can be used to yield improved or new lower bounds for mixture density estimation.
Denote by the minimax squared Hellinger risk for density estimation over the mixture class $\{f_{G}: G\in\calG\}$, namely,
	\[
	R_n(\calG) \triangleq \inf_{\hat f}\sup_{G\in\calG} \Expect_G[H^2(f_{G},\hat f)],
	\]
	where $\hat f$ is measurable with respect to the sample $Y_1,\ldots,Y_n \iiddistr f_G$ and 
	the Hellinger distance between densities $f$ and $g$ with respect to a dominating measure $\nu$ is denoted by 
	$H(f,g) \triangleq (\int (\sqrt{f}-\sqrt{g})^2 d\nu)^{1/2}$.

\paragraph{General program.}
The following result is a counterpart of \prettyref{prop:ass_gen}.
Since we no longer dealing with regression functions, the program only involves the operator $K$ in 
\prettyref{eq:k_def} not the higher-order operator $K_1$.

\begin{proposition}\label{prop:ass_gen-dens} Fix a prior distribution $G_0$, constants $a,\tau,\tau_2,\gamma \ge 0$
and $m$ functions $r_1,\ldots,r_m$ on $\calX$ with the following properties.
\begin{enumerate}
\item For each $i$, $ \|r_i\|_{\infty} \le a $;

\item For each $i$, $\|K r_i\|_{L_2(f_0)} \le \sqrt{\gamma}$;

\item For any $v \in \{0, \pm 1\}^m$,
	\begin{equation}\label{eq:rg_ag-dens}
		\left\|\sum_{i=1}^m v_i K r_i\right\|^2_{L_2(f_0)} \ge \tau \|v\|_2^2 - \tau_2;
\end{equation}	

\item For each $i$, $\int r_i dG_0=0$.
\apxonly{If we don't assume this, we need to replace \prettyref{eq:rg_ag-dens} $h_i=K r_i$ with its centered version $\bar h_i = h_i-\mu_i$, namely, 
$\|\bar h_v\right\|^2_{L_2(f_0)} \ge \tau \|v\|_2^2 - \tau_2$.}
\end{enumerate}
Then the minimax $H^2$-risk for density estimation over the class of priors $\calG = \{G: \left|{dG/dG_0} - 1\right| \le {1\over 2}\}$ satisfies
\begin{equation}\label{eq:propass_g-dens}
	R_n(\calG) \ge C \delta^2 (m \tau - \tau_2) \,, \qquad \delta \triangleq \frac{1}{\max(\sqrt{n \gamma},
ma)}\,,
\end{equation}
where $C>0$ is an absolute constant.
\end{proposition}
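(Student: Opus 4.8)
The plan is to run the same Assouad construction as in the proof of Proposition~\ref{prop:ass_gen}, but in the streamlined form appropriate to density estimation: since the mixture density depends on the perturbation only through the linear operator $K$ (recall $f_G=f_0\cdot Kr$ from~\eqref{eq:fpi-K}), the second-order operator $K_1$ plays no role here, so the analysis is genuinely simpler than in the regret case. The fourth assumption, $\int r_i\,dG_0=0$, is used precisely to dispense with the normalization factor $1+\delta\mu_u$ and keep the mixture densities exactly affine in $\delta$; without it one would have to center $h_i$ as $\bar h_i=Kr_i-\int r_i\,dG_0$ throughout.

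First I would build a $2^m$-point family: for $u\in\{0,1\}^m$ put $r_u=\sum_i u_i r_i$ and $dG_u=(1+\delta r_u)\,dG_0$, choosing $\delta$ to be the largest value obeying $\delta m a\le\tfrac12$ and $n\delta^2\gamma\le c_0$ for a small absolute constant $c_0$, which forces $\delta\asymp 1/\max(\sqrt{n\gamma},ma)$. Since $|dG_u/dG_0-1|=\delta|r_u|\le\delta m a\le\tfrac12$, each $G_u\in\calG$, so $R_n(\calG)\ge R_n(\{G_u:u\in\{0,1\}^m\})$. Because $K$ is a conditional-expectation operator it fixes constants, so by~\eqref{eq:fpi-K} the mixture density of $G_u$ is $f_u=f_0(1+\delta h_u)$ with $h_u:=Kr_u=\sum_i u_i Kr_i$; moreover $\|h_u\|_\infty\le\|r_u\|_\infty\le ma$ gives $1+\delta h_u\in[\tfrac12,\tfrac32]$.

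Next I would establish the two inequalities Assouad needs. For the separation, using $(\sqrt a-\sqrt b)^2=(a-b)^2/(\sqrt a+\sqrt b)^2\ge(a-b)^2/6$ on $[\tfrac12,\tfrac32]$ together with the third assumption~\eqref{eq:rg_ag-dens} (and $\|u-v\|_2^2=d_{\rm H}(u,v)$ for binary strings),
\[
H^2(f_u,f_v)=\int f_0\bigl(\sqrt{1+\delta h_u}-\sqrt{1+\delta h_v}\bigr)^2 d\nu\ \ge\ \frac{\delta^2}{6}\Bigl\|\sum_i(u_i-v_i)Kr_i\Bigr\|_{L_2(f_0)}^2\ \ge\ \frac{\delta^2}{6}\bigl(\tau\,d_{\rm H}(u,v)-\tau_2\bigr).
\]
For the information bound, when $d_{\rm H}(u,v)=1$ we have $h_u-h_v=\pm Kr_i$ for some $i$, so by the second assumption and $1+\delta h_v\ge\tfrac12$,
\[
\chi^2(f_u\|f_v)=\delta^2\!\int f_0\,\frac{(h_u-h_v)^2}{1+\delta h_v}\,d\nu\ \le\ 2\delta^2\|Kr_i\|_{L_2(f_0)}^2\ \le\ 2\delta^2\gamma,
\]
whence $\chi^2(f_u^{\otimes n}\|f_v^{\otimes n})=(1+\chi^2(f_u\|f_v))^n-1\le e^{2n\delta^2\gamma}-1$ is a small absolute constant by the choice of $c_0$, so neighbouring product laws are statistically indistinguishable in the sense required for Assouad.

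Finally I would reduce $H^2$-estimation to recovering $u$ in Hamming loss: given any density estimator $\hat f$, set $\hat u=\argmin_v H(\hat f,f_v)$; the triangle inequality and minimality give $H(f_u,f_{\hat u})\le 2H(f_u,\hat f)$, so by the separation bound $\Expect_{f_u^{\otimes n}}[H^2(f_u,\hat f)]\ge\tfrac{\delta^2}{24}\bigl(\tau\,\Expect_{f_u^{\otimes n}}[d_{\rm H}(\hat u,u)]-\tau_2\bigr)$. Applying Assouad's lemma (e.g.~\cite[Theorem~2.12(iv)]{Tsybakov09}) with the neighbour bound above yields $\inf_{\hat u}\sup_u\Expect_{f_u^{\otimes n}}[d_{\rm H}(\hat u,u)]\gtrsim m$, and taking the infimum over $\hat f$ and the supremum over $u$ gives $R_n(\calG)\ge R_n(\{G_u\})\ge C\delta^2(\tau m-\tau_2)$ with $\delta\asymp 1/\max(\sqrt{n\gamma},ma)$, as claimed. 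I do not anticipate a genuine obstacle: this is a simplified version of Proposition~\ref{prop:ass_gen}, and the only points requiring care are calibrating the absolute constant $c_0$ in $n\delta^2\gamma\le c_0$ so that Assouad's lemma returns a positive constant, and invoking the fourth assumption to avoid the centering correction.
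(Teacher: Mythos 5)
Your proof is correct and follows essentially the same route as the paper: the paper defers to the proof of Proposition~\ref{prop:ass_gen} (Assouad on the family $dG_u=(1+\delta r_u)dG_0$), noting only that assumption~4 makes $f_u=(1+\delta h_u)f_0$ exactly and that $H^2(f_u,f_v)\asymp\int (f_u-f_v)^2/f_0$ on the interval $[\tfrac12 f_0,\tfrac32 f_0]$, which is precisely the separation and $\chi^2$-neighbour calculation you spell out.
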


\begin{proof}
	The proof is almost identical to that of \prettyref{prop:ass_gen} based on Assouad's lemma, so we only point out the major differences.
	By the fourth assumption, $\mu_i = \int r_i dG_0=0$, so we simply have
	$dG_u = (1+\delta r_u) dG_0$ and $f_u = (1+\delta h_u) f_0$, where $h_u = K r_u$.
	By \prettyref{eq:rg_c1g}, we have $\frac{1}{2}\leq f_u/f_0 \leq \frac{3}{2}$, so for any $u,v\in\{0,1\}^m$,
	\[
	H^2(f_u,f_v) \asymp \int \frac{(f_u-f_v)^2}{f_0} = \delta^2 \|K(r_u-r_v)\|_{L_2(f_0)}^2 \geq \delta^2 (\tau d_{\rm H}(u,v)-\tau_2).
	\]
	Then the conclusion follows from applying Assouad's lemma as in the proof of \prettyref{prop:ass_gen}.
\end{proof}

\paragraph{Truncation.}

\prettyref{lmm:truncation} controls the effect of truncation on the regret. The counterpart for density estimation is as follows. 
\begin{lemma}
\label{lmm:truncation-density}	
Under the setting of \prettyref{lmm:truncation}, 
	\begin{equation}
	R_n(\calG) \geq  R_n(\calG') - 8 n\sqrt{\epsilon}.
	\label{eq:truncate-density}
	\end{equation}
\end{lemma}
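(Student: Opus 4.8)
The plan is to mirror the proof of Lemma~\ref{lmm:truncation}, but the argument becomes substantially simpler because the Hellinger loss is bounded (by $2$): we will need neither the fourth‑moment hypothesis nor any restriction to proper estimators. Fix an arbitrary density estimator $\hat f=\hat f(Y^n)$ and, for each $G\in\calG'$, let $G_a$ be its normalized restriction to $[-a,a]$ exactly as in Lemma~\ref{lmm:truncation}; since $G([-a,a])\ge 1-\epsilon>0$ this is well defined and lies in $\calG=\calP([-a,a])$. Hence for every $\hat f$ we have $\sup_{G\in\calG}\Expect_G[H^2(f_G,\hat f)]\ge\sup_{G\in\calG'}\Expect_{G_a}[H^2(f_{G_a},\hat f)]$, and it remains to lower bound $\Expect_{G_a}[H^2(f_{G_a},\hat f)]$ by $\Expect_G[H^2(f_G,\hat f)]$ up to an additive $O(n\sqrt\epsilon)$.

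First I would replace the target $f_{G_a}$ by $f_G$. Since $G\mapsto f_G$ is a Markov kernel and $d_{\TV}(G,G_a)\le G([-a,a]^c)\le\epsilon$, the data‑processing inequality gives $d_{\TV}(f_G,f_{G_a})\le\epsilon$, hence $H^2(f_G,f_{G_a})\le 2d_{\TV}(f_G,f_{G_a})\le 2\epsilon$. Using the triangle inequality for the Hellinger metric, $H(f_{G_a},\hat f)\ge H(f_G,\hat f)-\sqrt{2\epsilon}$, and then squaring (a short case split according to the sign of $H(f_G,\hat f)-\sqrt{2\epsilon}$, together with $H\le\sqrt2$) yields the pointwise bound $H^2(f_{G_a},\hat f)\ge H^2(f_G,\hat f)-4\sqrt\epsilon$.

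Second I would change the sampling law. Take $\theta^n\iiddistr G$ and set $E\triangleq\{|\theta_i|\le a,\ \forall i\}$, so that $\Prob_G[E]=G([-a,a])^n\ge(1-\epsilon)^n\ge 1-n\epsilon$; conditioned on $E$ the coordinates $\theta_i$ are i.i.d.\ $G_a$, so $Y^n\iiddistr f_{G_a}$ and therefore $\Expect_{G_a}[H^2(f_{G_a},\hat f)]=\Expect_G[H^2(f_{G_a},\hat f)\mid E]$. Splitting the unconditional expectation over $E$ and $E^c$, bounding $H^2\le 2$ on $E^c$, and using $\Prob_G[E]\le1$, one gets $\Expect_G[H^2(f_{G_a},\hat f)\mid E]\ge\Expect_G[H^2(f_{G_a},\hat f)]-2n\epsilon$. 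Combining the two steps, for every $\hat f$ and every $G\in\calG'$,
\[
\Expect_{G_a}[H^2(f_{G_a},\hat f)]\ \ge\ \Expect_G[H^2(f_G,\hat f)]-4\sqrt\epsilon-2n\epsilon,
\]
and taking $\sup_{G\in\calG'}$ then $\inf_{\hat f}$, together with $4\sqrt\epsilon+2n\epsilon\le 8n\sqrt\epsilon$ (valid since $\epsilon\le1$ and $n\ge1$), gives $R_n(\calG)\ge R_n(\calG')-8n\sqrt\epsilon$.

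I do not expect a genuine obstacle: the only things to be careful about are the constant bookkeeping in the squaring step and arranging the conditioning so that $Y^n$ is truly drawn from the mixture under $G$ (not sampled from $f_{G_a}$ directly). As in Lemma~\ref{lmm:truncation}, the factor $n$ is unavoidable and enters solely through $1-(1-\epsilon)^n\le n\epsilon$.
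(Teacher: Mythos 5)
Your proof is correct and follows essentially the same two-step approach as the paper: change the sampling law from $G_a$ to $G$ at a cost of $O(n\epsilon)$ via boundedness of $H^2$ on the bad event $E^c$, and swap the target $f_{G_a}$ for $f_G$ at a cost of $O(\sqrt{\epsilon})$ via the Hellinger triangle inequality combined with data processing to bound $H(f_G,f_{G_a})$. The only difference is cosmetic—you perform the two steps in the opposite order and carry slightly different (equally valid) constants in the squaring step—so this matches the paper's argument.
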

\begin{proof}
	Indeed, using $G_a$ as defined in the proof of Lemma~\ref{lmm:truncation},
		\begin{align*}
	R_n(\calG)
	= & ~ \inf_{\hat f} \sup_{G \in \calG} \Expect_G [H^2(\hat f, f_G)]  \\
	\geq & ~ \inf_{\hat f} \sup_{G \in \calG'} \Expect_{G_a} [H^2(\hat f, f_{G_a})]  \\
	\stepa{\geq} & ~ \inf_{\hat f} \sup_{G \in \calG'} \Expect_{G} [H^2(\hat f, f_{G_a})]	- 2n\epsilon  \\
	\stepb{\geq} & ~ \inf_{\hat f} \sup_{G \in \calG'} \Expect_{G} [H^2(\hat f, f_{G})] - 6 H(f_G, f_{G_a})	-2 n\epsilon  \\
	\stepc{\geq} & ~  R_n(\calG') - 8n \sqrt{\epsilon},
	\end{align*}
		where (a) follows from the following counterpart of \eqref{eq:tr_1} since
		$H^2 \le 2$:
		$\Expect_G[H^2(f_G,\hat f)] \leq \Expect_G[H^2(f_G,\hat f)|E] + 2 n \epsilon = \Expect_{G_a}[H^2(f_G,\hat f)] + 2 n \epsilon$;
	(b) uses the triangle inequality and boundedness of Hellinger distance;
	(c) is due to $H^2(f_G, f_{G_a}) \leq H^2(G,G_a) \leq \TV(G,G_a) = \epsilon$, with the first step via the data processing inequality.
\end{proof}

\paragraph{Applications}
As a concrete application, we reuse the construction in \prettyref{sec:pf-gau} and \prettyref{apx:normal} to derive a minimax lower bound next for estimating Gaussian mixture densities. 
In particular, the lower bound $\Omega(\frac{\log n}{n})$ for $s$-subgaussian priors is previously shown in \cite{kim2014minimax,kim2020minimax} for sufficiently large $s$, an assumption which in fact can be dropped; the lower bound $\Omega(\frac{\log n}{n \log\log n})$ for bounded means appears to be new.

\begin{theorem}[Gaussian mixture estimation]
\label{thm:gaussian-density} 
Consider the setting of \prettyref{th:gaussian}.
\begin{itemize}
	\item 
	(Compactly supported case). 
For any $h>0$, there exists constants $c_0=c_0(h)>0$ and $n_0$, such that for all $n\geq n_0$,
	\begin{equation}\label{eq:gaussian-density1}
		R_n(\calP([-h,h]) \ge \frac{c_0}{n} {\log n \over \log \log n}\,.
	\end{equation}		
	
	\item 
	(Subgaussian case.) 
	For any $s>0$, there exists constants $c_1=c_1(s)>0$ and $n_0$, such that for all $n\ge n_0$,
	\begin{equation}\label{eq:gaussian-density2}
		R_n(\SubG(s)) \ge \frac{c_1 }{n} \log n\,.
	\end{equation}

	\end{itemize}
\end{theorem}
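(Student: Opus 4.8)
The plan is to reuse, essentially verbatim, the construction of \prettyref{sec:pf-gau} and \prettyref{apx:normal}, feeding it into \prettyref{prop:ass_gen-dens} in place of \prettyref{prop:ass_gen}. The main simplification is that the density–estimation program involves only the operator $K$, and \prettyref{lem:normal_main} already hands us an orthonormal eigenbasis $\{\psi_k\}$ of $S=K^*K$; hence the $\psi_k$'s are mapped by $K$ into \emph{mutually} orthogonal functions in $L_2(f_0)$ (cf.\ \prettyref{eq:n5}), so there is no need to separate indices (that was only needed to diagonalize $K_1$) and no need to touch $K_1$ at all. The price is that normalizing $\|Kr_j\|_{L_2(f_0)}=1$ forces $\gamma\asymp 1$ rather than $\gamma\asymp 1/m$, which is precisely why the resulting bound loses one factor of $m$ relative to the regret bound of \prettyref{th:gaussian}.

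Concretely, fix $0<s<1/2$, take $G_0=\calN(0,s)$, and for $j=1,\dots,m$ set
\[
r_j \;\triangleq\; \xi_j\,\psi_{2j-1}, \qquad \xi_j \;\triangleq\; \|K\psi_{2j-1}\|_{L_2(f_0)}^{-1} \;=\; (\lambda_0\mu^{2j-1})^{-1/2}.
\]
Using \emph{odd} indices is deliberate: since $H_k(-y)=(-1)^kH_k(y)$, each $\psi_{2j-1}$ is an odd function by \prettyref{eq:n8}, and $G_0$ is symmetric, so $\int r_j\,dG_0=0$, which is exactly the fourth hypothesis of \prettyref{prop:ass_gen-dens}. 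By \prettyref{eq:n5} the $Kr_j$ are orthonormal in $L_2(f_0)$, so hypotheses (2) and (3) hold with $\gamma=1$, $\tau=1$, $\tau_2=0$; and by \prettyref{eq:n7} and $0<\mu<1$, $\|r_j\|_\infty\le \xi_m\sqrt{\alpha_1}=\sqrt{\alpha_1/(\lambda_0\mu^{2m-1})}=:a$. \prettyref{prop:ass_gen-dens} then yields
\[
R_n(\calG_s') \;\gtrsim\; \frac{m}{\max(n,\;m^2a^2)}, \qquad \calG_s' \triangleq \sth{G:\ \tfrac12\le \tfrac{dG}{dG_0}\le \tfrac32}.
\]
Invoking \prettyref{eq:s-dependency} ($\mu\asymp s$, $\lambda_0\asymp\alpha_1\asymp s^{-1/2}$) gives $a\asymp \mu^{-(2m-1)/2}\asymp (1/s)^{(2m-1)/2}$, so $m^2a^2\le n$ holds once $2\log m+(2m-1)\log(1/s)=O(\log n)$; taking $m\asymp \log n/\log(1/s)$ with a small absolute constant does this, and gives
\[
R_n(\calG_s') \;\gtrsim\; \frac1n\,\frac{\log n}{\log(1/s)},
\]
with the hidden constant uniform over $0<s<1/2$, in exact analogy with \prettyref{eq:pc_10}.

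From here the two claims follow as in the proof of \prettyref{th:gaussian}. For \prettyref{eq:gaussian-density2}: since $\calG_{s'}'\subset\SubG(2s')$ for every $0<s'<1/2$ (as verified there), picking $s'$ to be a constant multiple of $s$ (capped below $1/2$) and using monotonicity of $s\mapsto\SubG(s)$ gives $R_n(\SubG(s))\gtrsim_s \log n/n$. For \prettyref{eq:gaussian-density1}: choose $s=c/\log n$, so $\log(1/s)\asymp\log\log n$ and $m\asymp \log n/\log\log n$, and apply \prettyref{lmm:truncation-density} with $a=h$. For $G\in\calG_s'$ we have $G([-h,h]^c)\le \tfrac32 G_0([-h,h]^c)\le 3e^{-h^2/(2s)}=3n^{-h^2/(2c)}$ and $\Expect_G[\theta^4]\lesssim s^2\lesssim 1$, so choosing $c=c(h)$ small enough that $h^2/(2c)>4$ makes the correction $8n\sqrt{\epsilon}=O(n^{1-h^2/(4c)})=o(n^{-1})$ negligible next to $\frac1n\frac{\log n}{\log\log n}$; this yields \prettyref{eq:gaussian-density1}.

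The only genuinely new bookkeeping relative to \prettyref{th:gaussian} is (i) verifying that the zero–mean hypothesis of \prettyref{prop:ass_gen-dens} is compatible with the Hermite construction — handled by the parity observation above — and (ii) noting that the truncation error in \prettyref{lmm:truncation-density} scales like $n\sqrt\epsilon$ (not $\sqrt{n\epsilon}$), so we must arrange $\epsilon=o(n^{-4})$; this is achieved by taking the base variance $s$ of $G_0$ small enough (here $c<h^2/8$ suffices). No step poses a real obstacle: everything reduces to the analysis already carried out for the regret lower bound, with $K_1$ simply deleted throughout.
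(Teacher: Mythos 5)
Your proof is correct and follows essentially the same route as the paper: apply \prettyref{prop:ass_gen-dens} with $G_0=\calN(0,s)$ and (normalized) Hermite functions from \prettyref{lem:normal_main}, then truncate via \prettyref{lmm:truncation-density} after taking $s\asymp 1/\log n$. The one substantive difference is your choice of \emph{odd} indices $\psi_{2j-1}$, which you correctly observe is needed to satisfy the centering hypothesis $\int r_j\,dG_0=0$ of \prettyref{prop:ass_gen-dens} (Hermite parity plus symmetry of $G_0$). The paper's own proof simply takes $r_k=\psi_k/\|K\psi_k\|_{L_2(f_0)}$ for $k=1,\dots,m$ and does not verify this hypothesis; for even $k$ the integral $\int\psi_k\,dG_0$ is generically nonzero, so strictly speaking one must either restrict to odd indices as you do, or replace each $Kr_i$ by its centered version $Kr_i-\mu_i$ in hypothesis (3) (an alternative alluded to only in a commented-out remark in the source). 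Your parity fix is cleaner and costs nothing beyond replacing $\mu^m$ by $\mu^{2m-1}$ in $a$, which changes only constants. The rest — the estimate $m^2a^2\lesssim n$ forcing $m\asymp\log n/\log(1/s)$, the inclusion $\calG_s'\subset\SubG(2s)$, and the need for $\epsilon=o(n^{-4})$ so that the $8n\sqrt\epsilon$ truncation penalty is $o(1/n)$ — matches the paper.
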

\begin{proof}
	We show 
	\begin{equation}
	R_n(\SubG(s)) \geq \frac{c \log n}{n}
	\label{eq:gaussian-density3}
	\end{equation}
	for some absolute constant $c$. Taking $s = \frac{c(h)}{\log n}$ and applying the truncation argument in \prettyref{eq:truncate-density} yields \prettyref{eq:gaussian-density1} for the compactly supported case.
	
	To prove \prettyref{eq:gaussian-density3}, we apply \prettyref{prop:ass_gen-dens} with 
	$r_k  = \frac{\psi_k}{\|K\psi_k\|_{L_2(f_0)}}, k=1,\ldots,m$, where $\{\psi_k\}$ is the class of functions from \prettyref{lem:normal_main}, so that $\tau=\gamma=1, \tau_2=0$ and $a= \sqrt{\frac{\alpha_1}{\lambda_0\mu^m}}$.
	Since $\mu \asymp s$, choosing $m = \frac{c' \log n}{\log \frac{1}{s}}$ for small constant $c'$ ensures $n \gamma \geq (ma)^2$, leading to the lower bound of $\Omega(\frac{m}{n})$.
\end{proof}

The counterpart for the Poisson model is as follows. The proof is almost identical to that of \prettyref{thm:gaussian-density} using the construction in \prettyref{lem:poisson_main} and \prettyref{lem:poisson_exp}, and is hence omitted.
It is not hard to show that the lower bounds in \prettyref{eq:poisson-density1} and \prettyref{eq:poisson-density2}  are in fact optimal \cite{JPW21}.

\begin{theorem}[Poissoin mixture estimation]
\label{thm:poisson-density} 
Consider the setting of \prettyref{th:poisson}.
\begin{itemize}
	\item 
	(Compactly supported case.)
For any $h>0$, there exists constants $c_0=c_0(h)>0$ and $n_0$, such that for all $n\geq n_0$,
	\begin{equation}\label{eq:poisson-density1}
		R_n(\calP([0,h]) \ge \frac{c_0}{n} {\log n \over \log \log n}\,.
	\end{equation}		
	
	\item 
	(Subexponential case.) 
	For any $s>0$, there exists constants $c_1=c_1(s)>0$ and $n_0$, such that for all $n\ge n_0$,
	\begin{equation}\label{eq:poisson-density2}
		R_n(\SubE(s)) \ge \frac{c_1 }{n} \log n\,.
	\end{equation}

	\end{itemize}
\end{theorem}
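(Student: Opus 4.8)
The plan is to follow the proof of \prettyref{thm:gaussian-density} essentially verbatim, substituting the Gaussian inputs (the Hermite/Mehler analysis of \prettyref{lem:normal_main}) by their Poisson analogues, namely the Laguerre/Hardy--Hille analysis summarized in \prettyref{lem:poisson_main} (for the compactly supported case) and \prettyref{lem:poisson_exp} (for the subexponential case). The two engines are the density-estimation lower-bound program \prettyref{prop:ass_gen-dens} and the density-estimation truncation bound \prettyref{lmm:truncation-density}; as in \prettyref{thm:gaussian-density}, I would first prove the subexponential bound and then deduce the compactly supported bound by choosing the scale of the base prior to depend on $n$ and applying truncation.

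For the subexponential case, fix $s>0$ and take the base prior $G_0=\GammaD(1,\beta)$ (exponential with mean $1/\beta$), with $\beta=\beta(s)$ large enough that $\calG'\eqdef\{G:|dG/dG_0-1|\le 1/2\}\subseteq\SubE(s)$, exactly as in the lower-bound proof of \prettyref{th:poisson}. Apply \prettyref{lem:poisson_exp} to obtain, for every $m\ge1$, functions $r_1,\dots,r_m$ with $(Kr_i,Kr_j)_{L_2(f_0)}=0$ for $i\ne j$, $\|Kr_j\|_{L_2(f_0)}^2\asymp 1/m^2$ (the matching lower bound is read off from the proof, where $\|Kr_j\|^2=(S\Gamma_k,\Gamma_k)/(S_1\Gamma_k,\Gamma_k)\asymp 1/m^2$), and $\|r_j\|_\infty\le e^{Cm}$ with $C=C(\beta)$. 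Rescaling to $\tilde r_j\eqdef r_j/\|Kr_j\|_{L_2(f_0)}$ makes $\{K\tilde r_j\}$ orthonormal in $L_2(f_0)$ (so $\gamma=\tau=1$, $\tau_2=0$ in \prettyref{prop:ass_gen-dens}) and $\|\tilde r_j\|_\infty\le m e^{Cm}\le e^{C'm}\eqdef a$. Then \prettyref{prop:ass_gen-dens} gives $R_n(\calG')\gtrsim \delta^2 m$ with $\delta=1/\max(\sqrt n, ma)$; choosing $m=c(s)\log n$ with $c(s)$ small enough that $ma=n^{C'c(s)+o(1)}\le\sqrt n$ forces $\delta=1/\sqrt n$ and hence $R_n(\SubE(s))\ge R_n(\calG')\gtrsim (\log n)/n$.

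For the compactly supported case, take instead $G_0=\GammaD(\alpha,\beta)$ with $\alpha=c_1\log n$ and $\beta=c_2\alpha$ (with $c_1,c_2>0$ pinned down in terms of $h$), precisely the base prior of the lower-bound proof of \prettyref{th:poisson}. Apply \prettyref{lem:poisson_main} with $m=c_1(\log n)/\log\log n$: one gets $(Kr_i,Kr_j)_{L_2(f_0)}=0$ for $i\ne j$, $\|Kr_j\|_{L_2(f_0)}^2\asymp 1/m$, and $\|r_j\|_\infty\le\sqrt{\beta/\alpha}\,e^{C(m\log\beta+\alpha)}$; since $\beta/\alpha=c_2$, $\log\beta\asymp\log\log n$ and $m\log\beta\asymp\alpha=c_1\log n$, the rescaled $\tilde r_j=r_j/\|Kr_j\|_{L_2(f_0)}$ satisfy $\|K\tilde r_j\|_{L_2(f_0)}=1$ and $\|\tilde r_j\|_\infty\le n^{O(c_1)}\eqdef a$. \prettyref{prop:ass_gen-dens} then gives $R_n(\calG')\gtrsim\delta^2 m$; picking $c_1$ small enough that $ma\le\sqrt n$ makes $\delta=1/\sqrt n$, so $R_n(\calG')\gtrsim m/n\asymp (\log n)/(n\log\log n)$. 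Finally I would transfer this to $\calP([0,h])$ via \prettyref{lmm:truncation-density}: since $G([\theta>h])\le 2G_0([\theta>h])$ for $G\in\calG'$, the Chernoff bound \eqref{eq:chernoff-gamma} shows that taking $c_2=\kappa(c_1)/h$ with $\kappa+1-\log\kappa=4/c_1$ makes $\epsilon\eqdef\sup_{G\in\calG'}G([0,h]^c)\le 2n^{-4}$ while $\sup_{G\in\calG'}\Expect_G[\theta^4]\lesssim c_2^{-4}$ stays bounded; hence $R_n(\calP([0,h]))\ge R_n(\calG')-8n\sqrt{\epsilon}\gtrsim (\log n)/(n\log\log n)-O(1/n)$.

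The one point that is not purely mechanical is the fourth hypothesis of \prettyref{prop:ass_gen-dens}, which requires $\int r_i\,dG_0=0$; the Laguerre-type functions $\Gamma_k$ underlying \prettyref{lem:poisson_main} and \prettyref{lem:poisson_exp} do not satisfy this exactly. However, using the generalized-Laguerre moment $\int_0^\infty x^\nu e^{-px}L_k^\nu(cx)\,dx\propto (p-c)^k$ one checks that $\mu_k\eqdef\int\Gamma_k\,dG_0$ is exponentially small in $k$ relative to $\|K\Gamma_k\|_{L_2(f_0)}$ --- concretely $\mu_k^2/\|K\Gamma_k\|_{L_2(f_0)}^2=O(1/n)$ over the ranges of $k$ used above --- so replacing $Kr_i$ by its centered version $Kr_i-\mu_i$ (as in the remark following \prettyref{prop:ass_gen-dens}) only degrades $\tau$ from $1$ to $\tfrac12$ and leaves every other bound intact. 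I expect this bookkeeping, together with extracting the two-sided estimate $\|Kr_j\|_{L_2(f_0)}^2\asymp 1/m$ (resp.\ $\asymp 1/m^2$) from the explicit formulas for $(S\Gamma_k,\Gamma_k)$ and $(S_1\Gamma_k,\Gamma_k)$ in \prettyref{apx:poisson}, to be the only obstacle, and a routine one --- there is no conceptual difficulty beyond what is already handled for the Gaussian model.
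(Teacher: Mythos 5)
Your proof is correct and follows the route the paper intends for its omitted argument: apply \prettyref{prop:ass_gen-dens} with $r_k=\Gamma_k/\|K\Gamma_k\|_{L_2(f_0)}$ built from the Gamma base priors of \prettyref{lem:poisson_main} and \prettyref{lem:poisson_exp}, then truncate via \prettyref{lmm:truncation-density}; you are in fact more scrupulous than the paper about the hypothesis $\int r_i\,dG_0=0$ of \prettyref{prop:ass_gen-dens}, which the Gaussian argument of \prettyref{thm:gaussian-density} also tacitly skips. The only minor inaccuracy is your claim $\mu_k^2=O(1/n)$: since $\int\Gamma_k\,dG_0\propto(-z)^k\binom{k+\alpha-1}{k}$ has the same $k$-dependence as $b_k=\|K\Gamma_k\|_{L_2(f_0)}^2$, one gets $\mu_k^2\propto z^k\binom{k+\alpha-1}{k}$, which for the relevant $k$ is only $n^{-\Omega(1)}$ (not necessarily $\le 1/n$) -- but $\sum_k\mu_k^2=o(1)$ is all that is needed to keep the Gram matrix $I-\mu\mu^{\mathrm T}$ bounded below by $\tfrac12 I$, so the conclusion $\tau\to\tfrac12$ stands.
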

\apxonly{
\begin{proof}
	Again we take $G_0=\GammaD(\alpha,\beta)$. Take 
	\[
	r_k = \frac{\Gamma_k}{\|K \Gamma_k\|_{L_2(f_0)}} = \underbrace{\frac{\Gamma_k}{\|K_1 \Gamma_k\|_{L_2(f_0)}}}_{\text{previous choice for \prettyref{lem:poisson_main}}} \underbrace{\sqrt{\frac{(S_1\Gamma_k,\Gamma_k)}{(S\Gamma_k,\Gamma_k)}}}_{\text{poly}(m)}
	\]
	Applying \prettyref{prop:ass_gen-dens} with $\tau=\gamma=1$ and $\tau_2=0$ so the end result is $\Omega(\frac{m}{n})$.
\end{proof}
}


\begin{thebibliography}{ADSMT12}

\bibitem[ADSMT12]{aptekarev2012asymptotics}
AI~Aptekarev, JS~Dehesa, P~S{\'a}nchez-Moreno, and DN~Tulyakov.
\newblock Asymptotics of $l_p$-norms of {H}ermite polynomials and {R\'e}nyi
  entropy of {R}ydberg oscillator states.
\newblock {\em Contemp. Math}, 578:19--29, 2012.

\bibitem[AS64]{AS64}
Milton Abramowitz and Irene~A Stegun.
\newblock {\em Handbook of mathematical functions: with formulas, graphs, and
  mathematical tables}.
\newblock Courier Corporation, 1964.

\bibitem[BG09]{brown2009nonparametric}
Lawrence~D Brown and Eitan Greenshtein.
\newblock Nonparametric empirical {B}ayes and compound decision approaches to
  estimation of a high-dimensional vector of normal means.
\newblock {\em The Annals of Statistics}, pages 1685--1704, 2009.

\bibitem[BGR13]{BGR13}
Lawrence~D Brown, Eitan Greenshtein, and Ya'acov Ritov.
\newblock The {P}oisson compound decision problem revisited.
\newblock {\em Journal of the American Statistical Association},
  108(502):741--749, 2013.

\bibitem[Bro08]{brown2008season}
Lawrence~D Brown.
\newblock In-season prediction of batting averages: A field test of empirical
  {B}ayes and {B}ayes methodologies.
\newblock {\em The Annals of Applied Statistics}, 2(1):113--152, 2008.

\bibitem[Cas85]{casella1985introduction}
George Casella.
\newblock An introduction to empirical {B}ayes data analysis.
\newblock {\em The American Statistician}, 39(2):83--87, 1985.

\bibitem[CT06]{cover}
Thomas~M. Cover and Joy~A. Thomas.
\newblock {\em Elements of information theory, 2nd Ed.}
\newblock Wiley-Interscience, New York, NY, USA, 2006.

\bibitem[DJ94]{DJ94}
D.L. Donoho and I.M. Johnstone.
\newblock Minimax risk over $l_p$-balls for $l_q$-error.
\newblock {\em Probability Theory and Related Fields}, 99(2):277--303, 1994.

\bibitem[Efr11]{efron2011tweedie}
Bradley Efron.
\newblock Tweedie’s formula and selection bias.
\newblock {\em Journal of the American Statistical Association},
  106(496):1602--1614, 2011.

\bibitem[Efr12]{efron2012large}
Bradley Efron.
\newblock {\em Large-scale inference: empirical {B}ayes methods for estimation,
  testing, and prediction}, volume~1.
\newblock Cambridge University Press, 2012.

\bibitem[Efr14]{efron2014two}
Bradley Efron.
\newblock Two modeling strategies for empirical {B}ayes estimation.
\newblock {\em Statistical Science}, 29(2):285--301, 2014.

\bibitem[Efr19]{efron2019bayes}
Bradley Efron.
\newblock Bayes, oracle {B}ayes and empirical {B}ayes.
\newblock {\em Statistical science}, 34(2):177--201, 2019.

\bibitem[Efr21]{efron2021empirical}
Bradley Efron.
\newblock Empirical {B}ayes: Concepts and methods.
\newblock 2021.
\newblock
  \url{http://statweb.stanford.edu/~ckirby/brad/papers/2021EB-concepts-methods.pdf}.

\bibitem[EM72]{efron1972empirical}
Bradley Efron and Carl Morris.
\newblock empirical {B}ayes on vector observations: An extension of stein's
  method.
\newblock {\em Biometrika}, 59(2):335--347, 1972.

\bibitem[ETST01]{efron2001empirical}
Bradley Efron, Robert Tibshirani, John~D Storey, and Virginia Tusher.
\newblock Empirical {B}ayes analysis of a microarray experiment.
\newblock {\em Journal of the American statistical association},
  96(456):1151--1160, 2001.

\bibitem[GF00]{george2000calibration}
Edward~I George and Dean~P Foster.
\newblock Calibration and empirical {B}ayes variable selection.
\newblock {\em Biometrika}, 87(4):731--747, 2000.

\bibitem[Gil68]{gilliland1968sequential}
Dennis~C Gilliland.
\newblock Sequential compound estimation.
\newblock {\em The Annals of Mathematical Statistics}, 39(6):1890--1904, 1968.

\bibitem[GR07]{GR}
I.~S. Gradshteyn and I.~M. Ryzhik.
\newblock {\em {Table of Integrals Series and Products}}.
\newblock Academic Press, New York, NY, seventh edition, 2007.

\bibitem[GR09]{greenshtein2009asymptotic}
Eitan Greenshtein and Ya'acov Ritov.
\newblock Asymptotic efficiency of simple decisions for the compound decision
  problem.
\newblock {\em Lecture Notes-Monograph Series}, pages 266--275, 2009.

\bibitem[GvdV01]{ghosal.vdv}
S.~Ghosal and A.W. van~der Vaart.
\newblock {Entropies and rates of convergence for maximum likelihood and Bayes
  estimation for mixtures of normal densities}.
\newblock {\em The Annals of Statistics}, 29(5):1233--1263, 2001.

\bibitem[Han57]{hannan1957}
James Hannan.
\newblock Approximation to {B}ayes risk in repeated play.
\newblock In {\em Contributions to the Theory of Games (AM-39), Volume III},
  pages 97--140. Princeton University Press, 1957.

\bibitem[JPW21]{JPW21}
Soham Jana, Yury Polyanskiy, and Yihong Wu.
\newblock Regret optimality of minimum-distance based empirical {B}ayes method
  for the {P}oisson model.
\newblock {\em Preprint}, 2021.

\bibitem[JS04]{johnstone2004needles}
Iain~M Johnstone and Bernard~W Silverman.
\newblock Needles and straw in haystacks: empirical {B}ayes estimates of
  possibly sparse sequences.
\newblock {\em The Annals of Statistics}, 32(4):1594--1649, 2004.

\bibitem[JZ09]{jiang2009general}
Wenhua Jiang and Cun-Hui Zhang.
\newblock General maximum likelihood empirical {B}ayes estimation of normal
  means.
\newblock {\em The Annals of Statistics}, 37(4):1647--1684, 2009.

\bibitem[KG20]{kim2020minimax}
Arlene~KH Kim and Adityanand Guntuboyina.
\newblock Minimax bounds for estimating multivariate gaussian location
  mixtures.
\newblock {\em arXiv preprint arXiv:2012.00444}, 2020.

\bibitem[Kim14]{kim2014minimax}
Arlene~KH Kim.
\newblock Minimax bounds for estimation of normal mixtures.
\newblock {\em Bernoulli}, 20(4):1802--1818, 2014.

\bibitem[KW56]{KW56}
Jack Kiefer and Jacob Wolfowitz.
\newblock Consistency of the maximum likelihood estimator in the presence of
  infinitely many incidental parameters.
\newblock {\em The Annals of Mathematical Statistics}, pages 887--906, 1956.

\bibitem[LG02]{li2002empirical}
Jianjun Li and Shanti~S Gupta.
\newblock Empirical {B}ayes tests based on kernel sequence estimation.
\newblock {\em Statistica Sinica}, pages 1061--1072, 2002.

\bibitem[LGL05]{li2005convergence}
Jianjun Li, Shanti~S Gupta, and Friedrich Liese.
\newblock Convergence rates of empirical {B}ayes estimation in exponential
  family.
\newblock {\em Journal of statistical planning and inference}, 131(1):101--115,
  2005.

\bibitem[Lia00]{liang2000empirical}
Ta~Chen Liang.
\newblock On an empirical {B}ayes test for a normal mean.
\newblock {\em The Annals of Statistics}, 28(2):648--655, 2000.

\bibitem[Lia04]{liang2004optimal}
Ta~Chen Liang.
\newblock On optimal convergence rate of empirical {B}ayes tests.
\newblock {\em Statistics \& probability letters}, 68(2):189--198, 2004.

\bibitem[Lin83]{lindsay1983geometry1}
Bruce~G Lindsay.
\newblock The geometry of mixture likelihoods: a general theory.
\newblock {\em The Annals of Statistics}, 11(1):86--94, 1983.

\bibitem[Meh66]{mehler1866ueber}
F~Gustav Mehler.
\newblock Ueber die {E}ntwicklung einer {F}unction von beliebig vielen
  {V}ariablen nach {L}aplaceschen {F}unctionen h{\"o}herer {O}rdnung.
\newblock {\em Journal f\"ur die Reine und Angewandte Mathematik}, 66:161--176,
  1866.

\bibitem[Pen99]{pensky1999nonparametric}
Marianna Pensky.
\newblock Nonparametric empirical {B}ayes estimation via wavelets.
\newblock In {\em {B}ayesian Inference in Wavelet-Based Models}, pages
  323--340. Springer, 1999.

\bibitem[Rob51]{Robbins51}
Herbert Robbins.
\newblock Asymptotically subminimax solutions of compound statistical decision
  problems.
\newblock In {\em Proceedings of the Second Berkeley Symposium on Mathematical
  Statistics and Probability}. The Regents of the University of California,
  1951.

\bibitem[Rob56]{Robbins56}
Herbert Robbins.
\newblock An empirical {B}ayes approach to statistics.
\newblock In {\em Proceedings of the Third Berkeley Symposium on Mathematical
  Statistics and Probability, Volume 1: Contributions to the Theory of
  Statistics}. The Regents of the University of California, 1956.

\bibitem[Ros70]{rosenthal1970subspaces}
Haskell~P Rosenthal.
\newblock On the subspaces of $l^p (p>2)$ spanned by sequences of independent
  random variables.
\newblock {\em Israel Journal of Mathematics}, 8(3):273--303, 1970.

\bibitem[Sam65]{samuel1965sequential}
Ester Samuel.
\newblock Sequential compound estimators.
\newblock {\em The Annals of Mathematical Statistics}, pages 879--889, 1965.

\bibitem[Sin79]{Singh79}
R.~S. Singh.
\newblock empirical {B}ayes estimation in lebesgue-exponential families with
  rates near the best possible rate.
\newblock {\em The Annals of Statistics}, 7(4):890--902, 1979.

\bibitem[Tsy09]{Tsybakov09}
A.~B. Tsybakov.
\newblock {\em Introduction to Nonparametric Estimation}.
\newblock Springer Verlag, New York, NY, 2009.

\bibitem[Ver18]{vershynin2018high}
Roman Vershynin.
\newblock {\em High-dimensional probability: An introduction with applications
  in data science}, volume~47.
\newblock Cambridge university press, 2018.

\bibitem[VR66]{van1966sequential}
John Van~Ryzin.
\newblock The sequential compound decision problem with m$\times$ n finite loss
  matrix.
\newblock {\em The Annals of Mathematical Statistics}, pages 954--975, 1966.

\bibitem[Wol57]{Wolfowitz57}
J.~Wolfowitz.
\newblock The minimum distance method.
\newblock {\em The Annals of Mathematical Statistics}, pages 75--88, 1957.

\bibitem[WV12]{mmse.functional.IT}
Yihong Wu and Sergio Verd\'u.
\newblock Functional properties of {MMSE} and mutual information.
\newblock {\em {IEEE} Trans. Inf. Theory}, 58(3):1289 -- 1301, Mar. 2012.

\bibitem[YL05]{yuan2005efficient}
Ming Yuan and Yi~Lin.
\newblock Efficient empirical {B}ayes variable selection and estimation in
  linear models.
\newblock {\em Journal of the American Statistical Association},
  100(472):1215--1225, 2005.

\bibitem[Zha97]{zhang1997empirical}
Cun-Hui Zhang.
\newblock Empirical bayes and compound estimation of normal means.
\newblock {\em Statistica Sinica}, 7(1):181--193, 1997.

\bibitem[Zha03]{zhang2003compound}
Cun-Hui Zhang.
\newblock Compound decision theory and empirical {B}ayes methods.
\newblock {\em The Annals of Statistics}, 31(2):379--390, 2003.

\bibitem[Zha09]{zhang2009generalized}
Cun-Hui Zhang.
\newblock Generalized maximum likelihood estimation of normal mixture
  densities.
\newblock {\em Statistica Sinica}, pages 1297--1318, 2009.

\end{thebibliography}

\end{document}